\theoremstyle{plain}
\newtheorem{theorem}{Theorem}[section]
\newtheorem{thmintro}{Theorem}[section]
\newtheorem{corointro}[thmintro]{Corollary}
\newtheorem*{lemintro*}{Lemma}
\newtheorem*{theorem*}{Theorem}
\newtheorem{lemma}[theorem]{Lemma}
\newtheorem{corollary}[theorem]{Corollary}
\newtheorem{proposition}[theorem]{Proposition}
\theoremstyle{definition}
\newtheorem{definition}[theorem]{Definition}
\newtheorem{question}[theorem]{Question}
\newtheorem{remark}[theorem]{Remark}
\definecolor{darkblue}{rgb}{0,0,0.7} 
\definecolor{forestgreen}{rgb}{0.13,0.54,0.13} 
\definecolor{palered}{rgb}{1,0.6,0.6} 
\definecolor{paleblue}{rgb}{0.8,0.89,1} 
\newcommand{\darkblue}{\color{darkblue}} 
\newcommand{\defn}[1]{\emph{\darkblue #1}} 
\newcommand\topstrut[1][1.2ex]{\setlength\bigstrutjot{#1}{\bigstrut[t]}}
\newcommand\botstrut[1][0.9ex]{\setlength\bigstrutjot{#1}{\bigstrut[b]}}
\newcommand{\R}{\mathbb{R}}
\newcommand{\Q}{\mathbb{Q}}
\renewcommand{\P}{\mathbb{P}}
\newcommand{\Rbar}{\overline{\R^{d-1}}}
\DeclareMathOperator{\conv}{conv}
\DeclareMathOperator{\inter}{int}
\DeclareMathOperator{\gl}{GL}
\DeclareMathOperator{\vertices}{Vert}
\DeclareMathOperator{\aff}{aff}
\definecolor{orange}{rgb}{0.898, 0.621, 0.0}
\definecolor{skyblue}{rgb}{0.336, 0.703, 0.910}
\definecolor{bluishgreen}{rgb}{0, 0.617, 0.449}
\definecolor{yellow}{rgb}{0.937, 0.890, 0.258}
\definecolor{blue}{rgb}{0, 0.445, 0.695}
\definecolor{red}{rgb}{0.832, 0.367, 0}
\definecolor{purple}{rgb}{0.797, 0.473, 0.652}
\newcommand{\wddots}{{\textcolor{white}{$\ddots$}}}
\newcommand{\bcheck}{\cellcolor{paleblue} $\checkmark$}
\newcommand{\bddots}{\cellcolor{paleblue} $\ddots$}
\newcommand{\bcdots}{\cellcolor{paleblue} $\cdots$}
\newcommand{\rcdots}{\cellcolor{palered} $\cdots$}
\newcommand{\rtimes}{\cellcolor{palered} $\times$}
\newcommand{\rddots}{\cellcolor{palered} $\ddots$}
\newcommand{\lnum}[1]{\rule{-2mm}{0mm}{#1}}
\newcommand{\lcel}[1]{\rule{-6mm}{0mm}{#1}}
\newcommand{\tf}[1]{$\begin{matrix*}[r]#1\end{matrix*}$}
\newcommand{\bn}{$\!\!\!\!\!$}
\newcommand{\tcw}[1]{\textcolor{white}{#1}}
\newlength{\Oldarrayrulewidth}
\newcolumntype{?}[1]{!{\vrule width #1}}
\title[Combinatorial Inscribability Obstructions for Higher-Dimensional Polytopes]{Combinatorial Inscribability Obstructions\\for Higher-Dimensional Polytopes}
\date{\today}
\author[J. Doolittle, J.-P. Labb\'e, C.E.M.C. Lange, R. Sinn, J. Spreer, and G.M. Ziegler]{Joseph Doolittle
        \and
	Jean-Philippe Labb\'e
        \and
        Carsten E. M. C. Lange
	\and
	Rainer Sinn
	\and
	Jonathan Spreer
	\and
	G\"unter M. Ziegler}
\address[J. Doolittle, J.-P. Labb\'e, R. Sinn, G.M. Ziegler]{Institut f\"ur Mathematik, Freie Universit\"at Berlin, Arnimallee 2, 14195 Berlin, Germany}
\email{$\{$jdoolitt,labbe,rsinn,ziegler$\}$@math.fu-berlin.de,}
\urladdr{http://page.mi.fu-berlin.de/$\{$jdoolitt,labbe,rsinn,gmziegler$\}$}
\address[C.E.M.C.~Lange]{Fakult\"at f\"ur Mathematik, Technische Universit\"at M\"unchen, D-85748, Garching, Germany}
\email{lange@ma.tum.de}
\urladdr{https://geo.ma.tum.de/de/personen/carsten-lange.html}
\address[J. Spreer]{School of Mathematics and Statistics F07, University of Sydney, NSW 2006 Australia}
\email{jonathan.spreer@sydney.edu.au}
\urladdr{https://www.tacet.de/Jonathan/}
\thanks{This work was supported by the DFG Collaborative Research Center TRR~109 ``Discretization in Geometry and Dynamics.''}
\keywords{cyclic polytope, inscribability, circumscribability, $f$-vector}
\subjclass[2010]{Primary 52B11; Secondary 51M20, 52B05}
\begin{document}

\begin{abstract} 
For $3$-dimensional convex polytopes, inscribability is a classical property that is relatively well-understood due to its relation with Delaunay subdivisions of the plane and hyperbolic geometry. 
In particular, inscribability can be tested in polynomial time, and for every $f$-vector of $3$-polytopes, there exists an inscribable polytope with that $f$-vector.
For higher-dimensional polytopes, much less is known.
Of course, for any inscribable polytope, all of its lower-dimensional faces need to be inscribable, but this condition does not appear to be very strong.

We observe non-trivial new obstructions to the inscribability of polytopes that arise when imposing that a certain inscribable face be inscribed. Using this obstruction, we show that the duals of the $4$-dimensional cyclic polytopes with at least $8$ vertices---all of whose faces are inscribable---are not inscribable. This result is optimal in the following sense: We prove that the duals of the cyclic $4$-polytopes with up to $7$ vertices are, in fact, inscribable.

Moreover, we interpret this obstruction combinatorially as a forbidden subposet of the face lattice of a polytope, show that $d$-dimensional cyclic polytopes with at least $d+4$ vertices are not circumscribable, and that no dual of a neighborly $4$-polytope with $8$ vertices, that is, no polytope with $f$-vector $(20,40,28,8)$, is inscribable.
\end{abstract}

\maketitle

\section{Introduction and background}

The convex hull of a finite number of points on a sphere is an \defn{inscribed polytope}.
Choosing the points randomly on the sphere almost surely gives a simplicial polytope.
However, choosing these points carefully, one may obtain other types of polytopes.
In 1832, Steiner asked whether it is possible to obtain every $3$-dimensional polytope this way \cite[Question 77, p.~316]{steiner_systematische_1832}.
A polytope is \defn{inscribable} if it is combinatorially equivalent to an inscribed polytope, i.e., if it has a realization that is inscribed.
Around 100 years later, Steinitz provided the first examples of polytopes that are not inscribable \cite{steinitz_ueber_1928}.
Such polytopes without an inscribed realization include the simplicial polytope obtained by stacking each triangle of the tetrahedron, see \cite[Section 13.5]{gruenbaum_convex_1967} and \cite[p.~140]{steinitz_ueber_1928}. 
In light of this, one may ask to what extent a combinatorial property of a polytope (simplicity, simpliciality, neighborlyness, stackedness, etc.) can restrict its inscribability.
Gonska and Ziegler asked whether inscribable polytopes affect a coarser polytope invariant, the $f$-vector \cite[Introduction]{gonska_inscribable_2013}.
Indeed, experimental results seem to indicate that sufficient conditions for inscribability may be obtained from the $f$-vector \cite[Section~2]{padrol_topics_2016}.
For more detail on related questions and their history, we refer to the recent articles \cite{gonska_inscribable_2013,padrol_topics_2016,chen_scribability_2017} and references therein. 

Due to its inherent relation with \emph{Delaunay tesselations} \cite{brown_voronoi_1979} and planar $3$-connected graphs \cite{steinitz_ueber_1928,gruenbaum_some_1987,dillencourt_graph_1996}, inscribability of $3$-dimensional polytopes has garnered attention and consequently is relatively well understood.
Hodgson, Rivin and Smith following work by Rivin use hyperbolic geometry to show that a $3$-polytope is inscribable if and only if a certain system of linear inequalities has a solution, \cite{hodgson_characterization_1992,rivin_characterization_1996}.
Similar to other problems in polytope theory (e.g. characterization of $f$-vectors or of vertex-edge graphs), the methods of Hodgson, Rivin and Smith
do not extend to higher dimensions and relatively little is known for $d$-dimensional polytopes (or \defn{$d$-polytopes}).
Numerous classes of polytopes have been determined to be inscribable.
Among them are the cyclic $d$-polytopes, see \cite[Section~2.5.2]{gonska_inscribable_2013} for three proofs.
Gonska and Ziegler provide a strikingly simple combinatorial characterization of inscribable stacked polytopes: a stacked polytope is inscribable if and only if all nodes of its dual tree have degree at most three, \cite[Theorem 1]{gonska_inscribable_2013}.
Earlier, graph-theoretical necessary conditions and sufficient conditions for a $3$-polytope to be inscribable were provided by Steinitz \cite{steinitz_ueber_1928} and \cite[Section~13.5]{gruenbaum_convex_1967} as  well as Dillencourt and Smith \cite{dillencourt_graph_1996}.
Of course, every face of an inscribed polytope must be inscribed, so the inscribability conditions of $3$-polytopes impose natural conditions on higher dimensional polytopes, see e.g. \cite[Section~12]{rivin_characterization_1996} and \cite[Section~2]{padrol_topics_2016}.
In particular, the conditions can be used as a first check to determine the non-inscribability of some polytopes in dimension~$4$.
For simplicial $4$-polytopes with at most $10$ vertices, Firsching combined these results with nonlinear optimization to determine inscribability of all but 13 types \cite[Theorem~25]{firsching_realizability_2017}.
This shows that even small polytopes can satisfy the necessary conditions but may fail to have an obvious inscribed realization. 
In which case, new efficient methods have to be developed to determine the inscribability of combinatorial types of polytopes in higher dimension \cite[Question~3]{firsching_realizability_2017}.

In this article, we study the inscribability of higher-dimensional polytopes and describe an obstruction to inscribability using face lattices of polytopes.
We provide an approach to studying inscribability that makes use of higher-dimensional facial incidence information, in contrast to using only the graph of the polytope.
Namely, we present ``Miquel's polytopes'', a class of $3$-polytopes steming from Miquel's circle theorem used in the following lemma.

\begin{lemintro*}[Obstruction Lemma]
If a polytope $P$ has a Miquel polytope $M$ as a $3$-face with a prescribed incidence relation with another vertex of $P$, then $P$ has no realization such that~$M$ is inscribed. 
\end{lemintro*}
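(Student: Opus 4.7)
The plan is to argue by contradiction. Suppose that $P$ admits a realization in which the $3$-face $M$ is inscribed in a $2$-sphere $S$. I would translate the combinatorial incidences of $M$, together with the prescribed incidence of the extra vertex of $P$, into a configuration of circles and lines on $S$, and then apply Miquel's circle theorem to exhibit a forced coincidence that the prescribed incidence cannot accommodate.

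First, I would unpack what inscribing $M$ in $S$ means at the level of $2$-faces and edges. Each $2$-face $F$ of $M$ spans an affine plane, and its vertices must then lie on the circle $C_F := \aff(F) \cap S$. Two $2$-faces sharing an edge give two circles whose common chord is precisely that edge; two $2$-faces sharing only a vertex give two circles tangent or secant only at that vertex. Thus the face lattice of $M$ is rigidly encoded by a network of circles on $S$ whose pairwise incidences match those of the $2$-faces of $M$. The defining feature of $M$ being a ``Miquel polytope'' is, by construction, that this network fits the hypothesis of Miquel's theorem, so that a distinguished ``Miquel point'' on $S$ is forced to lie on a certain additional circle determined by the remaining $2$-faces.

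Next, I would bring in the prescribed extra vertex $v$ of $P$. The hypothesis tells me that $v$ is combinatorially incident to a specified set of faces of $M$, each of which spans a hyperplane in $\R^4$. Requiring $v$ to lie in the intersections of these hyperplanes (because $v$ lies on the corresponding faces of $P$) pins down an affine subspace in which $v$ must lie; intersecting with $S$ this translates into $v$ being forced to coincide with the Miquel point (or, via a stereographic projection from a suitable point of $S$ onto a plane, with a specific intersection point of a planar Miquel configuration). At this stage the $3$-dimensional statement reduces to the classical planar Miquel theorem and its converse.

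Finally, the contradiction would come from comparing two independently derived locations: the Miquel configuration on $S$ dictates where $v$ must be, whereas the affine constraints coming from the face lattice of $P$ force $v$ to lie elsewhere; alternatively, both conditions force the same point on $S$ but this point fails to meet the convexity/positioning required for $v$ to be an actual vertex of a convex $P$. The main obstacle I anticipate is bookkeeping: correctly identifying which sub-collection of circles $C_F$ plays the role of the Miquel quadruple, and verifying that the prescribed combinatorial incidence relation with the extra vertex is exactly the one ruled out by the converse of Miquel's theorem, rather than a weaker or stronger incidence. Once the stereographic picture is set up and the Miquel configuration is identified cleanly, the contradiction should be a direct application of the classical theorem.
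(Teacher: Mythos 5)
Your outline has the right tools on the table --- inscribability of $M$ forces the vertex sets of its quadrilateral $2$-faces to be cocircular, Miquel's theorem applies, and a contradiction against the combinatorics of $P$ is the goal --- but it aims Miquel's theorem at the wrong object, so the contradiction you propose would not materialize.

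Concretely, the extra vertex $v$ (the vertex $0$ of the subposet $\mathcal{X}$ in \Cref{def:subposet}) is \emph{not} a vertex of $M$ and never lies on the inscribing sphere $S$ of $M$; in any realization it retains several degrees of freedom, so it cannot be ``pinned down'' or ``forced to coincide with the Miquel point.'' What the prescribed incidence actually constrains is the relative position of two \emph{edges} of $M$, not the position of $v$. The hypothesis says $v$ is a simple vertex lying on two $2$-faces $X,Y$ of $P$ (which are \emph{not} faces of $M$), that $X$ and $Y$ each share one edge with $M$ --- say $e_1$ and $e_2$ --- and that $X\wedge Y$ is exactly the vertex $v$. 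Because $v$ is simple (exactly $d$ edges, linearly independent) and $X,Y$ meet only at $v$, the affine spans $\aff(X)$ and $\aff(Y)$ intersect only at $v$; since neither $e_1$ nor $e_2$ passes through $v$, this forces $e_1$ and $e_2$ to be \emph{skew}. On the other side, once $M$ is inscribed, applying the spherical version of Miquel's theorem (\Cref{lem:Miquel}) to the five inscribed quadrilateral $2$-faces of $M$ forces the four endpoints of $e_1$ and $e_2$ to be coplanar, hence the lines through $e_1$ and $e_2$ are \emph{coplanar}. Skew and coplanar at once is impossible; that is the contradiction.

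So the genuine gap in your plan is the step ``the affine constraints force $v$ to coincide with the Miquel point'': this does not follow, since $v$ is not incident to any face of $M$, and the sixth Miquel circle passes through four vertices of $M$, none of which is $v$. Your ``two independently derived locations for $v$'' should be replaced by ``two incompatible relative positions of the two distinguished edges of $M$'' --- coplanar via Miquel, skew via the simple vertex.
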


As a direct consequence of this lemma, we answer several questions related to inscribability.
For instance, Miquel's polytopes with this incidence relationship are found in dual to cyclic polytopes.
\begin{thmintro}[\Cref{thm:no_real_c48}]
\label{thm:A}
Let $k\geq 8$. No realization of $C_4(k)^*$ has an inscribed facet, although all its facets are inscribable. 
\end{thmintro}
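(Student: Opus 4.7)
My approach is to apply the Obstruction Lemma to each facet of $C_4(k)^*$. Recall that the facets of $C_4(k)^*$ are in bijection with the vertices of $C_4(k)$; I denote by $F_i$ the facet dual to the $i$-th vertex of $C_4(k)$ on the moment curve. Since $F_i$ is a $3$-face of $C_4(k)^*$, showing that $F_i$ is a Miquel polytope equipped with the prescribed incidence relation to some other vertex of $C_4(k)^*$ immediately yields, via the Obstruction Lemma, that no realization has $F_i$ inscribed. The theorem's first half then follows by varying $i$.

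The first step is to describe $F_i$ combinatorially. Because $F_i$ is the dual of the vertex figure of the $i$-th vertex of $C_4(k)$, its vertices are in bijection with the facets of $C_4(k)$ containing $v_i$, which I would enumerate via Gale's evenness condition. I would sort these facets by their run pattern around $i$, and record which pairs share a $2$-face of $C_4(k)$ (equivalently, which vertices of $F_i$ are joined by an edge), thereby obtaining an explicit combinatorial description of $F_i$ for $i$ in a representative position (boundary or interior of $\{1,\dots,k\}$).

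The second step, which I expect to be the main obstacle, is to identify the Miquel substructure inside $F_i$ together with the appropriate companion vertex of $C_4(k)^*$. The companion vertex corresponds to a facet $G$ of $C_4(k)$ that does not contain $v_i$ but whose $2$-faces interact with those of the facets surrounding $v_i$ in exactly the incidence pattern required by the Obstruction Lemma. Here the hypothesis $k\geq 8$ should be essential: one needs enough indices to place a Gale-even facet $G$ that meets the facets through $v_i$ along the correct collection of shared $2$-faces; this dovetails with the companion fact, proved elsewhere in the paper, that $C_4(k)^*$ is inscribable for $k \leq 7$. This step reduces to a careful, case-distinguished Gale-evenness computation, which I would first verify by hand in the base case $k=8$ and then extend to larger $k$ by exhibiting the same local configuration.

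Finally, I would use the $\mathbb{Z}/2$ combinatorial symmetry $i\mapsto k+1-i$ of $C_4(k)$, together with the symmetry between boundary and interior vertices afforded by one extra Gale-evenness case, to transport the argument from a representative $F_i$ to every facet of $C_4(k)^*$. For the secondary claim that each $F_i$ is inscribable as an abstract $3$-polytope, I would appeal to the Hodgson--Rivin--Smith linear-inequality criterion, or construct explicit inscribed realizations; since up to symmetry only a small number of combinatorial types of $F_i$ occur, this is a finite check. This part is logically independent of the obstruction argument and serves precisely to show that the obstruction produced by the Obstruction Lemma is genuinely $4$-dimensional and invisible to any $3$-dimensional inscribability test.
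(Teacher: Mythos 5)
Your outline matches the paper's strategy in substance: the paper's proof of Theorem~\ref{thm:no_real_c48} is exactly an application of what it later abstracts as \Cref{lem:obstruction}, carried out via a stereographic projection from a simple vertex $s$ so that skewness of two opposite edges of the resulting tetrahedron contradicts the coplanarity forced by Miquel's theorem inside a facet. You have correctly identified that the Obstruction Lemma is the engine, that Gale's evenness condition supplies the combinatorics, and that the facets of $C_4(k)^*$ are wedges $F_k$ whose inscribability needs a separate argument (the paper uses \Cref{prop:wedge_real}; Hodgson--Rivin--Smith would also work, though note this is one combinatorial type per value of $k$, not a single finite check across all $k\ge 8$).

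However, the one step you flag as ``the main obstacle'' is left entirely undone, and it is precisely the content of the theorem. Describing a plan to ``identify the Miquel substructure together with the appropriate companion vertex'' is not the same as exhibiting it: you must name a simple vertex $0$, two $2$-faces $X,Y$ with $X\wedge Y=0$, a pentagon of quadrilaterals inside the candidate Miquel facet $\Phi$, and the two edges of $\Phi$ lying in $X$ and $Y$, then verify all of this against Gale's evenness condition for every $k\ge 8$ at once. The paper does this concretely and uniformly: $\Phi = 1^*$, $0 = (3467)^*$, $X=(34)^*$, $Y=(67)^*$, with quadrilaterals $(12)^*,(14)^*,(15)^*,(16)^*,(1k)^*$ inside $1^*$; the vertices $(1234)^*,(134k)^*$ on $(134)^*\subset X$ and $(1267)^*,(167k)^*$ on $(167)^*\subset Y$ are then forced to be coplanar by Miquel, contradicting that $(34)^*$ and $(67)^*$ meet only at the simple vertex $(3467)^*$. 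Until you produce such an explicit configuration, your proposal does not prove the statement; it only states the belief that the data exist.

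Two further remarks. First, your symmetry bookkeeping is heavier than needed: since $d=4$ is even, Gale's evenness condition is cyclically invariant, so $\mathbb{Z}/k\mathbb{Z}$ (in fact the full dihedral group) acts on $C_4(k)$ transitively on vertices, hence on facets of $C_4(k)^*$. There is no ``boundary versus interior vertex'' distinction to handle; one representative facet suffices, which is what the paper invokes. Second, be careful that the Obstruction Lemma concludes non-inscribedness of the face $\Phi$ in \emph{any} realization of the ambient polytope, so applying it to one facet already gives the strong conclusion about that facet; transitivity then does the rest. Your framing is fine, just be aware that the hard work is entirely in producing the subposet, not in the symmetry step.
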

Chen and Padrol proved that $C_d(k)^*$ is not inscribable provided $k$ is large enough \cite[Theorem~2]{chen_scribability_2017}. 
They were able to provide a super-exponential bound in \(d\) for $k$ that guarantees non-inscribability. 
Extending the argumentation of \Cref{thm:A} leads to an effective bound on the non-inscribability of the duals of cyclic polytopes.
\begin{corointro}[\Cref{cor:all_cyclic}]
The dual of the $d$-dimensional cyclic polytope on $k$ vertices $C_d(k)^*$ is inscribable if
\begin{center}
\begin{tabular}{c@{,\quad or\quad}c@{,\quad or\quad}c}
$d\leq 3$ & $d=4$ and $k=7$ & $k\leq d+2$.
\end{tabular}
\end{center}
If $k\geq d+4\geq 8$, then $C_d(k)^*$ is not inscribable.
\end{corointro}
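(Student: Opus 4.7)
The plan is to treat the positive and negative assertions of the corollary separately, reducing the high-dimensional non-inscribability claim to the four-dimensional case governed by \Cref{thm:A}.

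For the positive direction, the dimensions $d\le 2$ are immediate, and $d=3$ follows from Rivin's characterization of inscribable $3$-polytopes applied to the simple polytope $C_3(k)^*$. When $k\le d+2$, the polytope $C_d(k)$ is either a $d$-simplex or has a one-dimensional Gale diagram, so $C_d(k)^*$ admits an explicit inscribed realization, for instance via a symmetric placement of vertices on a sphere. The remaining case $d=4$, $k=7$ is the optimality statement proved separately in the body of the paper.

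The negative direction splits into two parts. For $d=4$ and $k\ge 8$, \Cref{thm:A} applies directly: any inscribed realization of $C_4(k)^*$ in $S^3$ would intersect each facet's affine hull in a $2$-sphere containing all vertices of the facet, inscribing every facet and contradicting the theorem. For $d\ge 5$ and $k\ge d+4$ I would induct on $d$, with the key structural input that the vertex figure $C_d(k)/v_1$ at the first moment-curve vertex is combinatorially $C_{d-1}(k-1)$. This identification follows from Gale's evenness criterion: a facet of $C_d(k)$ containing $v_1$ has the form $\{1\}\cup S'$ with $S'\subset\{2,\dots,k\}$ of size $d-1$, and the evenness requirement on $\{2,\dots,k\}\setminus S'$ is precisely Gale's criterion for $C_{d-1}(k-1)$ on $\{2,\dots,k\}$. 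Dually, $C_d(k)^*$ has a facet combinatorially isomorphic to $C_{d-1}(k-1)^*$; since $d-1\ge 4$ and $k-1\ge (d-1)+4$, the induction hypothesis (with base case $d=4$ supplied by \Cref{thm:A}) forces this facet to be non-inscribable, forbidding any inscribed realization of $C_d(k)^*$.

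The main obstacle in the whole argument is the vertex-figure identification, which Gale's evenness handles cleanly; the remaining work lies in spelling out the small-case inscribed realizations on the positive side, where the explicit constructions for $k\le d+2$ and the dedicated analysis for $d=4$, $k=7$ carry the load.
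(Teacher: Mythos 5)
Your proposal follows essentially the same route as the paper: the positive cases are handled by the same small-case dispatch (polygons, simplices, $C_d(d+2)$ as a dual of a product of simplices via the one-dimensional Gale diagram, the wedge $C_3(k)^*$, and the dedicated $C_4(7)$ analysis), and the negative case is the same vertex-figure reduction to $d=4$, phrased inductively rather than iteratively, using that $C_d(k)/v$ at an endpoint of the moment curve is $C_{d-1}(k-1)$ together with \Cref{lem:classic_ii} (in your dual formulation, that a facet of a non-inscribable combinatorial type blocks inscribability of the whole polytope). The only cosmetic differences are that you appeal to Rivin for $d=3$ where the paper uses its explicit wedge-realization-space parametrization, and you spell out the Gale-evenness justification of the vertex-figure identification that the paper states as a known fact.
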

Thus the only class of dual to cyclic polytopes whose inscribability is not determined is \(C_d(d+3)^*\) for $d\geq 5$: Are they inscribable?
This seems to be a challenging problem.
For a summary of the results on cyclic polytopes, see the discussion at the end of \Cref{sec:cyclic_poly} and \Cref{tab:cyclic_poly}.
Further, we provide some evidence in support of 
\cite[Conjecture~8.4]{chen_scribability_2017} 
that neighborly polytopes with sufficiently many vertices are not circumscribable.
\begin{thmintro}[\Cref{thm:n48}]
If a polytope is dual to a neighborly $4$-polytope on $8$ vertices, then that polytope is not inscribed.
\end{thmintro}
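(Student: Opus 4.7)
The plan is to reduce the theorem to three applications of the Obstruction Lemma via the classification of neighborly $4$-polytopes on $8$ vertices. By the classical enumeration (Grünbaum--Sreedharan, later refined by Altshuler--Steinberg), there are exactly three combinatorial types of neighborly $4$-polytopes with $8$ vertices: the cyclic polytope $C_4(8)$ and two further ``sporadic'' types; all three have the common $f$-vector $(8,28,40,20)$, so their duals all have $f$-vector $(20,40,28,8)$. The strategy is to show, one type at a time, that the dual contains a Miquel $3$-face together with an extra vertex satisfying the incidence hypothesis of the Obstruction Lemma. Since every $3$-face of an inscribed polytope is itself inscribed, the Obstruction Lemma then precludes the existence of an inscribed realization.

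For the cyclic type, \Cref{thm:A} already delivers the conclusion: it asserts that no realization of $C_4(8)^*$ has an inscribed facet, and in particular $C_4(8)^*$ admits no inscribed realization. Hence the work reduces to the two non-cyclic neighborly types. For these, I would first record explicit facet lists (these are tabulated in the references on neighborly polytopes and are also easily generated from the Gale diagrams of $8$-vertex neighborly $4$-polytopes), then compute the face lattices of their duals: the facets of each dual correspond to the $8$ vertices of the primal, and the $3$-faces of each dual correspond to the $28$ edges of the primal.

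The next step is to locate, in each of the two remaining duals, a facet that is combinatorially a Miquel polytope, paired with an additional vertex of the dual standing in the prescribed incidence position. Since the argument in \Cref{thm:A} for $C_4(8)^*$ finds many such configurations using the gale evenness (equivalently, the cyclic structure of edges), and since the two other neighborly $8$-vertex types agree with $C_4(8)$ on many of their edge links -- they differ from $C_4(8)$ by only a handful of ``flips'' in their Gale pictures -- one expects that Miquel configurations persist at enough edges to produce the required $3$-face and adjacent vertex in both remaining duals. So the concrete work is: for each of the two non-cyclic types, identify one such edge and exhibit the Miquel-plus-vertex subposet in the face lattice of the dual.

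The main obstacle is precisely this case check: unlike $C_4(8)$, the two sporadic types lack the vertex-transitive symmetry that made locating Miquel polytopes routine in \Cref{thm:A}, so some candidate edges will have to be ruled out and a working one identified by direct inspection of their face lattices. A secondary concern is that the Obstruction Lemma demands not only a Miquel $3$-face but also a particular incidence relation with an additional vertex; verifying this compatibility combinatorially in each sporadic type is where the argument will be most delicate and where a careful bookkeeping of the Gale-transform data, rather than brute enumeration, will likely give the cleanest verification.
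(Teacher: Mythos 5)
Your high-level structure matches the paper's: both reduce to a case analysis over the three combinatorial types of neighborly $4$-polytopes on $8$ vertices, handle the cyclic type via \Cref{thm:A}, and aim to exhibit Miquel-type obstructions in the two sporadic duals. However, there are two genuine gaps.

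First, your proposal never actually exhibits the required configurations; it only asserts that ``one expects that Miquel configurations persist'' because the sporadic types ``agree with $C_4(8)$ on many of their edge links.'' This is a plausibility argument, not a proof. The paper, by contrast, writes down the full facet--vertex incidence data for each sporadic type and verifies the obstruction explicitly after stereographic projection.

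Second, and more seriously, your plan applies \Cref{lem:obstruction} to both non-cyclic types, but for one of them this cannot work. For $N_4^3(8)^*$, the Miquel wedge is $v_1$ and the projection vertex is $J$, and Miquel's theorem forces the edges $02$ and $35$ to be coplanar. But $02$ lies only in the facets $v_1,v_3,v_7$ while $J$ lies in $v_5,v_6,v_7,v_8$, so no $2$-face of $N_4^3(8)^*$ containing the edge $02$ also contains $J$; the Touching condition of \Cref{lem:obstruction} simply fails for every candidate pair of $2$-faces. There is therefore no forbidden subposet $\mathcal{X}$ to point at, and the ``coplanar vs.\ skew'' contradiction is unavailable. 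The paper handles this case with a different argument: convexity of the hexagonal ridge $02BEFH$ forces the line through $02$ to hit the edge $AI$ strictly between $A$ and $B$, yet coplanarity of $02$ and $35$ forces that line to pass through $I$, collapsing the vertices $B,H,I$. This is the ``slight generalization'' the paper alludes to, and without it the case $N_4^3(8)$ remains open under your approach. (The case $N_4^2(8)$ does conform to the Obstruction Lemma: the $2$-faces $v_5\cap v_6$ and $v_7\cap v_8$ meet exactly in the simple vertex $J$ and carry the edges $4A$ and $5C$, so your plan works as stated there.)
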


We denote the \defn{$f$-vector} of a $d$-polytope $P$ by $f_P = (f_0,f_1,\dots, f_{d-1})$, where $f_i$ is the number of $i$-dimensional faces of $P$.
An $f$-vector is \defn{inscribable} if at least one polytope with that $f$-vector is inscribable.
Gonska and Ziegler provide a combinatorial characterization of inscribable stacked polytopes. 

As all stacked $d$-polytopes with $n$ vertices have the same $f$-vector, 
their results imply that for $d+1\le n\le d+4$ all stacked polytopes are inscribable, while
for $n \geq d+5 \geq 8$ there exist inscribable as well as not inscribable stacked polytopes.

The influence of inscribability on $f$-vectors remained elusive.

As the $f$-vector is a coarse polytope invariant, there can be huge numbers of different combinatorial types of polytopes for a given $f$-vector. 
Therefore, to determine whether a specific given $f$-vector is inscribable, we use known classifications of combinatorial types of polytopes.
For example, there are three combinatorial types of $4$-polytopes that share the $f$-vector $(20,40,28,8)$,
which corresponds to the duals of the neighborly $4$-polytopes on $8$ vertices.
The dual to the cyclic polytope on $8$ vertices, $C_4(8)^*$, is the most prominent example.%

As a corollary of the above theorem, we exhibit the first $f$-vector that is not inscribable. 
This result provides the first evidence towards an answer to the question 
\emph{How does the condition of inscribability restrict the f-vectors of polytopes?}
raised in \cite[Introduction]{gonska_inscribable_2013}.
\begin{corointro}
\label{cor:main}
The $f$-vector $(20,40,28,8)$ is not inscribable.
\end{corointro}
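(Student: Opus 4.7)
The plan is to reduce the statement to \Cref{thm:n48} by classifying all $4$-polytopes with $f$-vector $(20,40,28,8)$. First I would pass to duals: if $P$ is a $4$-polytope with $f$-vector $(20,40,28,8)$, then $P^*$ is a $4$-polytope with $f$-vector $(8,28,40,20)$. Since $P^*$ has $8$ vertices and exactly $\binom{8}{2}=28$ edges, the $1$-skeleton of $P^*$ is the complete graph $K_8$, so $P^*$ is $2$-neighborly and therefore neighborly (as $\lfloor 4/2\rfloor=2$). Hence every polytope with $f$-vector $(20,40,28,8)$ arises as the dual of a neighborly $4$-polytope on $8$ vertices.

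Next I would invoke the classical enumeration of neighborly $4$-polytopes on $8$ vertices (Gr\"unbaum--Sreedharan, Altshuler--Steinberg), which gives exactly three combinatorial types and is already recalled in the paragraph preceding the corollary. Combined with the duality observation, this shows that the $f$-vector $(20,40,28,8)$ is realized by precisely three combinatorial types of $4$-polytopes, each of which is dual to one of the three neighborly $4$-polytopes on $8$ vertices.

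Finally, \Cref{thm:n48} states that no polytope dual to a neighborly $4$-polytope on $8$ vertices admits an inscribed realization. Applying this to each of the three combinatorial types, no polytope with $f$-vector $(20,40,28,8)$ is inscribable, which is the definition of an $f$-vector not being inscribable.

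In this way the corollary becomes a one-line consequence of \Cref{thm:n48} once the classification is invoked; the main obstacle is not located in this corollary but entirely inside \Cref{thm:n48}, whose proof must exhibit, for each of the three combinatorial types, the forbidden Miquel subposet supplied by the Obstruction Lemma.
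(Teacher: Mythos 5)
Your proposal is correct and follows the same route the paper takes implicitly: every $4$-polytope with $f$-vector $(20,40,28,8)$ has a dual with $8$ vertices and $\binom{8}{2}=28$ edges, hence complete graph, hence is $2$-neighborly (so neighborly), and the classification of neighborly $4$-polytopes on $8$ vertices into three combinatorial types is then combined with \Cref{thm:n48}. The paper simply asserts this correspondence in the paragraph preceding the corollary, whereas you derive the ``neighborly'' conclusion from the edge count $f_1 = \binom{f_0}{2}$, which is a clean and worthwhile elaboration of what the paper leaves implicit.
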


Beyond the previous considerations, we emphasize three notable aspects of the Obstruction Lemma.
Starting in dimension $4$, there are polytopes such that every facet is inscribable but \emph{no realization} of the polytope has \emph{any} inscribed facet.
Previous conditions on inscribability of polytopes were derived from their graphs.
This obstruction is \emph{different}: it uses higher-dimensional facial incidences, and it may be used to obtain obstructions in arbitrary face-figures.
This makes it a flexible combinatorial tool to obstruct inscribability.
Finally, the obstruction comes from a rather unrestrictive forbidden subposet and \emph{appears naturally in many common} $4$-polytopes. 
Out of the 1294 \(4\)-polytopes with \(8\) facets, 169 of them have a Miquel polytope as a facet. 
Of these 169 \(4\)-polytopes, twenty of them also have the required incidence relations to guarantee non-inscribability.
\medskip

\noindent
\textbf{Outline.}
In \Cref{sec:small_inscribed}, we study inscribability of $f$-vectors of polytopes with few vertices and facets.
In \Cref{sec:cyclic_poly}, we examine the inscribability of duals of cyclic polytopes and prove that ``most'' of these polytopes are not inscribable.
In \Cref{sec:forbidden}, we present the combinatorial obstruction to inscribability in terms of a forbidden subposet.
In \Cref{sec:neighborly}, we extend the obstruction to neighborly $4$-polytopes with $8$ vertices.
In \Cref{sec:questions}, we present three questions that arose during our investigation of inscribed polytopes.
\medskip

\noindent
\textbf{Acknowledgements.} 
The authors are grateful to Hao Chen, Alexander Fairley, Moritz Firsching, Arnau Padrol, G\"unter Rote, Francisco Santos, and Raman Sanyal for valuable discussions.

\section{Inscribability and small $f$-vectors}
\label{sec:small_inscribed}

In this section, we set the context surrounding the inscribability of polytopes and $f$-vectors.
For basic polytope nomenclature and constructions, we refer the reader to \cite{ziegler_lectures_1995,henk_basic_2018}.

\subsection{Inscribability and stereographic projections}
\label{ssec:background}
Alternatively to putting vertices of a polytope on a sphere, one may ask that all of its supporting hyperplanes be tangent to the sphere, in which case we say that the polytope is \defn{circumscribed}.
Similarly to inscribability, a polytope is \defn{circumscribable} if it has a realization that is circumscribed.
As Steinitz first observed \cite{steinitz_ueber_1928}, inscribability and circumscribability are notions related by polytope duality: a polytope is inscribable if and only if its dual is circumscribable. 
Hence, every statement about inscribability has an equivalent formulation in terms of circumscribability and we implicitly make use of this fact throughout the text.
We collect classical results on inscribability and circumscribability in the next two lemmas.

\begin{lemma}
\label{lem:classic}
Let $P$ be a $d$-polytope with vertex $v$ and $P^*$ be its dual.
\begin{enumerate}[label=\roman{enumi}),ref=\ref{lem:classic}~\emph{\roman{enumi})}]
	\item $P$ is inscribable if and only if $P^*$ is circumscribable, see e.g. \cite[Theorem~13.5.1]{gruenbaum_convex_1967}.\label{lem:classic_i}
	\item If $P$ is circumscribable, then so is the vertex figure of $v$ in $P$. \label{lem:classic_ii}
	\item If $P$ is inscribable, then so are its faces. \label{lem:classic_iii}
\end{enumerate}
\end{lemma}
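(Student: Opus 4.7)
The plan is to prove \ref{lem:classic_i} and \ref{lem:classic_iii} directly from first principles, and then obtain \ref{lem:classic_ii} as a corollary by combining the two via the face-lattice duality $P\leftrightarrow P^*$.

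For \ref{lem:classic_i}, I would use polarity with respect to the sphere. After an affine change of coordinates, assume that $P$ is inscribed in the unit sphere $S^{d-1}\subset\R^d$ centered at the origin. Every vertex $v$ of $P$ then satisfies $\langle v,v\rangle=1$, so the facet of the polar polytope $P^*$ corresponding to $v$ is supported by the hyperplane $\{x:\langle x,v\rangle=1\}$, which is tangent to $S^{d-1}$ at $v$. Running over all vertices of $P$ shows that every facet of $P^*$ is tangent to $S^{d-1}$, so $P^*$ is circumscribed; the converse direction is symmetric.

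For \ref{lem:classic_iii}, suppose $P$ is inscribed in a sphere $\Sigma\subset\R^d$ and let $F$ be a $k$-face of $P$. Since the vertices of $F$ affinely span $\aff(F)$, which is $k$-dimensional, the set $\aff(F)\cap\Sigma$ cannot be contained in any proper affine subspace of $\aff(F)$; hence this intersection is a genuine $(k-1)$-sphere $\Sigma_F\subset\aff(F)$. Every vertex of $F$ lies on $\Sigma_F$, so $F$ is inscribed in $\Sigma_F$ inside its own affine hull, and $F$ is therefore inscribable as a $k$-polytope.

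For \ref{lem:classic_ii}, combine the previous items. If $P$ is circumscribable, then \ref{lem:classic_i} gives that $P^*$ is inscribable; \ref{lem:classic_iii} then shows that every face of $P^*$ is inscribable, in particular the facet $F\subseteq P^*$ dual to the vertex $v$ of $P$. A second application of \ref{lem:classic_i} to $F$ gives that $F^*$ is circumscribable, and $F^*$ is combinatorially the vertex figure $P/v$. The main obstacle here is purely a bookkeeping check in this last step, namely that the facet of $P^*$ corresponding to $v$ is combinatorially dual to the vertex figure $P/v$; the remaining arguments reduce to one-line observations about inscribed vertices and tangent hyperplanes.
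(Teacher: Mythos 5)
The paper does not prove this lemma: parts (i)--(iii) are presented as classical facts, with (i) referred to Gr\"unbaum's Theorem~13.5.1 and the other two stated without proof. So the comparison here is between your argument and the standard textbook treatment.

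Your proofs of (iii) and (ii) are correct. In (iii), the intersection of a sphere with a $k$-dimensional affine subspace is always empty, a point, or a $(k-1)$-sphere; since the vertices of $F$ affinely span $\aff(F)$ and lie on that intersection, it must be a $(k-1)$-sphere, and the degenerate case $k=0$ is trivial. In (ii), the chain ``$P$ circumscribable $\Rightarrow$ $P^*$ inscribable $\Rightarrow$ the facet $\hat{v}\subset P^*$ inscribable $\Rightarrow$ $\hat{v}^*$ circumscribable'' is sound, and the final bookkeeping step is indeed a standard fact: the face poset of $\hat{v}$ is $\{G : v\in G\subseteq P\}$ with reversed order, so $\hat{v}^*\cong P/v$.

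There is, however, a genuine gap in your argument for (i). You write ``after an affine change of coordinates, assume $P$ is inscribed in the unit sphere centered at the origin,'' and then polarize at the origin. But polarity at the origin produces a \emph{bounded} polytope only when the origin lies in $\inter(P)$, and an affine transformation cannot arrange this: affine maps preserve the position of the circumcenter relative to $P$, and there are inscribed polytopes (e.g.\ all vertices clustered near one pole) whose circumcenter lies outside them. For such a realization the polar $P^\circ$ is an unbounded polyhedron, and your tangency argument never gets off the ground. The standard fix is to first apply a \emph{projective} (equivalently, M\"obius) transformation preserving the sphere: the group of such transformations acts transitively on the open ball, so one can move any interior point of $P$ to the center before polarizing. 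With that one-line repair your proof of (i) is complete; the converse direction, ``$P^*$ circumscribable $\Rightarrow$ $P$ inscribable,'' is already fine as written, since a circumscribed polytope always contains the center of its inscribed sphere.
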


Let $\mathbb{S}^{d-1}\subset \R^d$ denote the $(d-1)$-dimensional sphere of radius $\frac{1}{2}$ centered at $e_d:=(0,\dots,0,\frac{1}{2})$.
The points $N:=(0,\dots,0,1), S:=(0,\dots,0)\in\mathbb{S}^{d-1}$ are the North and South Pole of $\mathbb{S}^{d-1}$.
Moreover, we denote the one point compactification of $\R^{d-1}=\R^{d-1}\times \{0\}\subset \R^d$ by $\Rbar:=\R^{d-1} \cup \{\infty\}$.
The stereographic projection 
\[
	\pi_N:\mathbb{S}^{d-1}\rightarrow\Rbar
\]
from the point $N$ maps $x\in \mathbb{S}^{d-1}\setminus\{N\}$ to the intersection of the line through $x$ and~$N$ with $\R^{d-1}$, and~$N$ to $\infty$.
Let $P$ be a $d$-polytope with vertex $v$ and $H$ be a hyperplane that strictly separates \(v\) from \(\vertices(P) \setminus \{v\}\).
Then 
\[
	\pi_v:P\setminus\{v\} \rightarrow H
\]
denotes the \defn{stereographic projection of $P$ from $v$} defined analogously to the stereographic projection $\pi_N$.
If $P$ is inscribed on $\mathbb{S}^{d-1}$ and $v$ is rotated to $N$, then the two projections map $\mathbb{S}^{d-1}\cap P\setminus\{v\}$ to projectively equivalent labeled sets.

\begin{lemma}\label{lem:projection}
Let $P$ be a $d$-polytope, and $v$ be a vertex of $P$ contained in exactly $d$ facets. 
The stereographic projection $\pi_{v}$ yields the following structures.

\begin{enumerate}[label=\roman{enumi}),ref=\ref{lem:projection}~\emph{\roman{enumi})}]
\item The images of facets of $P$ that contain $v$ bound a $(d-1)$-dimensional simplex $\Delta$. \label{lem:projection_i}
\item The images of the vertices of \(P\) determine a point configuration such that the images of the faces of \(P\) that do not contain \(v\) form a polytopal subdivision of \(\Delta\).\label{lem:projection_iii}
\item The images of facets of $P$ that do not contain $v$ are $(d-1)$-dimensional polytopes. \label{lem:projection_ii}
\item If $P$ is inscribed, then the images of facets of \(P\) that do not contain \(v\) are inscribed. \label{lem:projection_iv}
\end{enumerate}
\end{lemma}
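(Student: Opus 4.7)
The plan is to exploit the projective nature of the central projection $\pi_v$ for parts (i)--(iii) and the classical sphere-preserving behavior of stereographic projection for (iv). For (i), I would invoke the vertex figure: since $v$ is contained in exactly $d$ facets, $v$ is simple, so the vertex figure $P/v$---the intersection of $P$ with a hyperplane close to $v$ that separates it from the other vertices---is a $(d-1)$-simplex whose $d$ facets correspond bijectively to the facets of $P$ containing $v$. Because $\pi_v$ restricted to $P/v$ is a projective transformation, $\Delta := \pi_v(P/v)$ is again a $(d-1)$-simplex, and its facets are precisely the images of the facets of $P$ through $v$ (each such facet $F$ and its slice $F \cap P/v$ generate the same cone from $v$, hence have the same image in $H$).

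For (ii) and (iii), I would first observe that for any face $G$ with $v \notin G$ one has $v \notin \aff(G)$: since $\aff(G) = \bigcap_{F \supseteq G} \aff(F)$ over all facets $F \supseteq G$, and $\aff(F)$ is a supporting hyperplane meeting $P$ exactly in $F$, having $v \in \aff(G)$ would force $v \in F$ for every $F \supseteq G$ and hence $v \in G$. It follows that $\pi_v$ restricts to an injective projective map on $\aff(G)$ and sends $G$ to a polytope of the same dimension, which gives (iii). For (ii), I would then show that $\pi_v$ sends the antistar of $v$ (the union of faces of $P$ not containing $v$) bijectively onto $\Delta$: the cone of directions from $v$ into $P$ is the simplicial cone bounded by the $d$ supporting hyperplanes of the facets through $v$, intersecting $H$ in exactly $\Delta$, and by convexity each such ray leaves $\partial P$ at a unique second point, which lies in the antistar. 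Combined with (iii), this yields the claimed polytopal subdivision of $\Delta$.

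For (iv), after an orthogonal rotation sending $v$ to $N$, the restriction of $\pi_v$ to $\mathbb{S}^{d-1} \setminus \{v\}$ coincides with the stereographic projection $\pi_N$. For a facet $F$ not containing $v$, the set $\aff(F) \cap \mathbb{S}^{d-1}$ is a $(d-2)$-sphere not passing through $N$ (since $v \notin \aff(F)$ by the argument in the previous paragraph), so its image under $\pi_N$ is again a $(d-2)$-sphere in $\R^{d-1}$; the vertices of $\pi_v(F)$, which by (iii) are exactly the images of the vertices of $F$, all lie on this sphere, so $\pi_v(F)$ is inscribed. The main subtle step is the bijectivity in (ii), which relies on the simplicity of $v$ to ensure that the cone of directions into $P$ at $v$ is genuinely a simplicial cone over $\Delta$; everything else reduces to standard projective invariance of polytopes and the circle/sphere-preservation property of stereographic projection.
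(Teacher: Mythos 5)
Your proof is correct and follows essentially the same plan as the paper's: part (i) via the vertex figure, parts (ii)--(iii) by observing that $\pi_v$ preserves the combinatorics of faces not containing $v$ (the paper loosely calls this an ``affine map''; your more careful ``projective map restricted to each $\aff(G)$ with $v \notin \aff(G)$'' is the accurate phrasing), and part (iv) by noting that $\aff(F) \cap \mathbb{S}^{d-1}$ is a $(d-2)$-sphere missing the projection point, hence maps to a sphere. The only additions beyond the paper's sketch are your explicit argument that $v \notin \aff(G)$ for all faces $G$ not containing $v$ and the ray-shooting bijectivity step for (ii), both of which the paper leaves implicit.
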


\begin{proof}
i) The projection of $P$ from \(v\) yields the vertex figure $P/v$, see \cite[Proposition~2.4]{ziegler_lectures_1995}.

ii) The projection~$\pi_v$ acts on faces of \(P\) that do not contain \(v\) as an affine map from \(\R^d\) to \(\R^{d-1}\). 
The polytopal complex of the faces of \(P\) that do not contain \(v\) is preserved by this affine map. 
By part \emph{i)}, the union of the images of these faces is \(\Delta\). 
This satisfies the definition of a polyhedral subdivision, see \cite[Definition~2.3.1 and Lemma~4.2.20]{de_loera_triangulations_2010}.

iii) The affine span of a facet of \(P\) that does not contain \(v\) does not intersect \(v\). 
Consequently, the projection of such a facet under the affine map \(\pi_v\) preserves the facet's dimension.

iv) Suppose \(P\) is inscribed and \(F\) is a facet of \(P\) that does not contain \(v\). 
Let \(S\) be the intersection of \(\aff(F)\) with the sphere inscribing \(P\). 
By \emph{iii)}, the image of \(F\) is a polytope whose vertices lie on the image of \(S\). 
The image of \(S\) is a \((d-2)\)-dimensional sphere.
See the related discussion in \cite[Section 2.4]{gonska_inscribable_2013}. 
\end{proof}

\subsection{Inscribed realizations of small $f$-vectors}

To study inscribability in dimension larger than $3$, we need small examples of not inscribable polytopes to contrast with the inscribable ones. 
A natural place to look for small examples is among $4$-polytopes with small \(f\)-vector. 
Often, a $d$-polytope $P$ (or its $f$-vector) is considered small if $f_0$ (or dually, $f_3$) is small. 
Another natural measure for the size of a $4$-polytope~$P$ or its $f$-vector is the the sum $f_0+f_3$.
This number counts the vertices of the vertex-facet adjacency graph that determines the combinatorial type of~$P$.
One of our motivating questions is:
\begin{center}
\emph{Is there an $f$-vector that is not inscribable?}
\end{center}
If such an $f$-vector exists, the following question is natural:
\begin{center}
\emph{What is the smallest $f$-vector that is not inscribable?}
\end{center}
In \Cref{sec:cyclic_poly,sec:neighborly} we show that such an $f$-vector indeed exists and provide the first example of an $f$-vector that is not inscribable. 
As many combinatorially distinct $d$-polytopes can have the same $f$-vector, an $f$-vector is not inscribable if \emph{every} polytope with this $f$-vector is \emph{not} inscribable.
Firsching \cite{firsching_complete_2018} extended previous classifications of $4$-polytopes with few vertices by Altshuler and Steinberg \cite{altshuler_complete_1985} and Brinkmann \cite{brinkmann_fvector_2016}.
For a thorough historical account, we refer to \cite[Section~1.4]{firsching_complete_2018} and the references therein.
A complete enumeration of all $4$-polytopes with $f_0\leq 9$ or $f_3\leq 9$ exists and partial results are known for $f_0,f_3\geq 10$ and $20 \leq f_0+f_3\leq 23$.
\Cref{tab:fvector_f0_is_7}--\Cref{tab:fvector_f0_is_9} list the total number of $f$-vectors and of combinatorial types for all possible pairs $(f_0,f_3)$ with $7 \leq f_3 \leq 9$.
The Euler--Poincar\'e formula determines $f_1$ from $f_0$, $f_2$ and $f_3$.
For a given a value of \(f_3\) and \(f_0\), for each possible value of \(f_2\), we write the pair \(f_2:N\), where \(N\) is the number $N$ of combinatorially distinct $4$-polytopes with the specified \(f_0, f_2, f_3\).

We discuss the inscribability of small \(f\)-vectors derived from the these enumeration results.
The only $4$-polytope with $5$ vertices or $5$ facets is the simplex which is clearly inscribable. 
If $f_0=6$, then $6\leq f_3\leq 9$ and each of the four pairs of $(f_0,f_3)$ determine a unique $4$-polytope. 
If $f_0=7$, then $6\leq f_3\leq 14$ and there are $15$ distinct $f$-vectors and $31$ combinatorially distinct polytopes. 
All these $35$ polytopes are inscribable and inscribing vertex-coordinates are provided in \Cref{appendix:inscribed_coordinates}.

For $f_0=8$ there are $40$ distinct $f$-vectors and $1294$ combinatorially distinct polytopes and for $f_0=9$ there are $88$ disctinct $f$-vectors and $274\ 148$ distinct polytopes.
Only $8$ out of these $128$ $f$-vectors with $8\leq f_0 \leq 9$ determine a combinatorially unique polytope.

\begin{table}[!b]
\resizebox{0.75\textwidth}{!}{
\rule{2cm}{0cm}
\begin{tabular}{r|r|r|r|r|r|r|r|r|r}
$f_0$  			  	  & 6			& 7 	   &  8		  &  9		 &  10		&  11		&  12		&  13		&  14\tcw{$\ \star$}  		\\ \hline
\# of $(f_0,*,*,7)$	  & 1			& 2		   &  2		  &  2	 	 &   2		&   2		&   2		&   1		&   1\tcw{$\ \star$}       \\ 
\# of combin. types	  & 1			& 3		   &  5		  &  7		 &   6		&   4		&   3		&   1		&   1\tcw{$\ \star$}		\\ \hline
\tf{f_2\!:&\bn\text{\# combin. types}\\
		 &	}	  
		 &   \tf{15\!:&\bn 1\\		
		   \tcw{41\!:}&\bn\tcw{1}}
						 & \tf{16\!:&\bn 2\\	
							   17\!:&\bn 1}
							 		& \tf{17\!:&\bn 4\\	
										  18\!:&\bn 1}
											   & \tf{17\!:&\bn 1\\	
													 18\!:&\bn 6}
							 							  & \tf{18\!:&\bn 4\\		
			   								  				   	19\!:&\bn 2}
							 							   			 & \tf{18\!:&\bn 1\\			
													 					   19\!:&\bn 3}
							 							   						& \tf{19\!:&\bn  2\\			
						 			   								  				  20\!:&\bn  1}
						 							 							   		   & \tf{20\!:&\bn  1\\
			   								   								   			   \tcw{41\!:}&\bn\tcw{1}}
							 							   								   			  & \tf{\star\ 21\!:&\bn   1\ \star\\			
																									  		 \tcw{41\!:}&\bn\tcw{1}}
\end{tabular}
\rule{2cm}{0cm}
}
\vspace{0.5cm}
\caption{Complete enumeration of $f$-vectors of $4$-polytopes with $f_3=7$.}
\label{tab:fvector_f0_is_7}

\end{table}

Since no efficient algorithm is known to decide inscribability for $d$-polytopes with $d\geq 4$, 
a natural heuristic to find an $f$-vector that is not inscribable is to study inscribability for small $f$-vectors associated to a combinatorially unique polytope. 
We briefly indicate results obtained by this search.
 
\begin{table}[!t]
\resizebox{\textwidth}{!}{
\begin{tabular}{r|r|r|r|r|r|r|r|r|r|r|r|r|r|r|r}
$f_0$  			  	  & 6			& 7 			&  8			&  9\tcw{$\ \star$}			&  10			&  11		&  12		&  13		&  14  		&  15		& 16	  &  17		  &  18  	 &  19		 & 20\tcw{$\ \star$}  \\ \hline
\# of $(f_3,*,*,8)$	  & 1			& 2				&  3			&  4\tcw{$\ \star$}	 		&   4			&   3		&   4		&   3		&   4       &   3		&  3	  &   2		  &   2       &   1		  &  1\tcw{$\ \star$}  \\ 
\# of combin. types	  & 1			& 5				& 27			& 76\tcw{$\ \star$}			& 137			& 205		& 225		& 218		& 166		& 117		& 65		&  31		&  14		&   4		&  3\tcw{$\ \star$}  \\ \hline
\tf{f_2\!:&\bn\text{\# combin. types}\\
		 & 					  \\
		 &					  \\
		 & }	  & \tf{16\!:&\bn 1\\		
							 &\bn \\
			     \tcw{241\!:}&\bn\tcw{1}\\
							 &\bn  }
							 	& \tf{18\!:&\bn 4\\			
							 		  19\!:&\bn 1\\
							   \tcw{241\!:}&\bn\tcw{1}\\
							   			   &\bn  }
							 			   		& \tf{19\!:&\bn 13\\	
													  20\!:&\bn 12\\
													  21\!:&\bn  2\\
			   								   	\tcw{241\!:}&\bn\tcw{1}}
																& \tf{19\!:&\bn  1\\	
													  		  		  20\!:&\bn 31\\
													 				  21\!:&\bn 37\\
			   														  22\!:&\bn  7}
							 							   			  		& \tf{20\!:&\bn  7\\		
			   								  				   			   		  21\!:&\bn 71\\
											   			   				   		  22\!:&\bn 56\\
																				  23\!:&\bn  3}
							 							   				   			   			& \tf{21\!:&\bn  26\\			
													 						   		  		  		  22\!:&\bn 128\\
																									  23\!:&\bn  51\\
			   								   								   				   \tcw{41\!:}&\bn\tcw{1}}
							 							   								   				   & \tf{21\!:&\bn   4\\			
						 			   								  				   			   		  		 22\!:&\bn  75\\
						 											   			   				   		  	   	 23\!:&\bn 129\\
						 																				  	     24\!:&\bn  17}
						 							 							   				   			   			& \tf{22\!:&\bn  16\\			
																															  23\!:&\bn 112\\
																															  24\!:&\bn  90\\
			   								   								   				   						   \tcw{241\!:}&\bn\tcw{1}}
							 							   								   				   							& \tf{22\!:&\bn   3\\			
																																		  23\!:&\bn  30\\
																																		  24\!:&\bn 103\\
																																		  25\!:&\bn  30}
						 							 							   				   			   							  		& \tf{23\!:&\bn   5\\			
																																					  24\!:&\bn  39\\
																																					  25\!:&\bn  73\\
																																				\tcw{241\!:}&\bn\tcw{1}}
							 							   								   				   												&  \tf{24\!:&\bn   8\\			
																																							   25\!:&\bn  32\\
																																							   26\!:&\bn  25\\
																																						\tcw{241\!:}&\bn\tcw{1}}
					 							 							   															   			   			& \tf{25\!:&\bn  8\\			
																																									  26\!:&\bn 23\\
																																								\tcw{241\!:}&\bn\tcw{1}\\
																																								   		   &\bn }  
 							 							   																							   							& \tf{26\!:&\bn  6\\			
 																																													  27\!:&\bn  8\\
 																																												\tcw{241\!:}&\bn\tcw{1}\\
 																																												   		   &\bn}
 						 							 							   															   			   							  		& \tf{27\!:&\bn   4\\			
 																																														   \tcw{241\!:}&\bn\tcw{1}\\
 																																														   			   &\bn\\
 																																																	   &\bn}
 							 							   																							   												&  \tf{\star\ 28\!:&\bn   3\ \star\\			
 																																																	       \tcw{241\!:}&\bn\tcw{1}\\
 																																														   			   		 		   &\bn\\
 																																																	   		 		   &\bn}
\end{tabular}
}
\vspace{0.5cm}
\caption{Complete enumeration of $f$-vectors of $4$-polytopes with $f_3=8$.}
\label{tab:fvector_f0_is_8}

\end{table}

\begin{table}[!b]
\resizebox{\textwidth}{!}{
\begin{tabular}{r|r|r|r|r|r|r|r|r|r|r|r}
$f_0$  				  & 6        	& 7 				& 8         	& 9\tcw{$\ \star$}          & 10		  		& 11			&  12			& 13		& 14  			& 15			& 16	\\ \hline
\# of $(f_0,*,*,9)$	  & 1			& 2					& 4		  		& 6\tcw{$\ \star$}	 		& 5			  		& 6				& 6			 	& 6			& 5	        	& 6				& 5		\\ 
\# of combin. types	  & 1			& 7					& 76			& 463\tcw{$\ \star$}		& 1872		  		& 5218			& 11277		 	& 19666		& 28821			& 36105			& 39436	\\ \hline
\tf{f_2\!:&\bn\text{\# combin. types}\\
		 & 					  \\
		 &					  \\
		 &					  \\
		 &					  \\
		 & }	  & \tf{18\!:&\bn 1\\
							 &\bn \\
							 &\bn  \\
							 &\bn  \\
			   \tcw{241\!:}&\bn\tcw{1}\\
							 &\bn  }
							 	& \tf{19\!: &\bn 1\\
									  20\!: &\bn 6\\
								 	 		&\bn  \\
									   	 	&\bn  \\
							  \tcw{241\!:}&\bn\tcw{1} \\
									    	&\bn}	& \tf{20\!: & \bn 1\\
											  		      21\!: & \bn 31   \\
													      22\!: & \bn 37   \\
													      23\!: & \bn  7   \\
												  \tcw{241\!:}&\bn\tcw{37}\\
														   	    & \bn}	
																	& \tf{20\!: & \bn   1\tcw{\ \star}\\
														   				  22\!: & \bn 129\tcw{\ \star}\\
																		  23\!: & \bn 209\tcw{\ \star}\\
																  \tcw{19}24\!: & \bn 116\tcw{\ \star}\\
																		  25\!: & \bn   7\tcw{\ \star}\\
																          26\!: & \bn   1\ } 
																		  		  & \tf{22\!: &\bn  12\\
																				  		   23\!: &\bn 397\\
																				  		   24\!: &\bn 897\\
																				  		   25\!: &\bn 504\\
																				   \tcw{9}26\!: &\bn  62\\
																				      	 	   	 &} & \tf{23\!: &\bn   65\\
																					  			  		  24\!: &\bn 1185\\
																					  			  	 	  25\!: &\bn 2593\\
																								  \tcw{19}26\!: &\bn 1266\\
																								 	  	  27\!: &\bn  107\\
																							 		  	  28\!: &\bn    2} 
																											  		& \tf{23\!: & \bn   3\\
																														  24\!: & \bn 333\\
																														  25\!: & \bn3250\\
																												   \tcw{9}26\!: & \bn5662\\
																												   		  27\!: & \bn1943\\
																														  28\!: & \bn  86} 
																																	& \tf{24\!: &\bn   33\\
																															  		  	  25\!: &\bn 1219\\
																																	  	  26\!: &\bn 7536\\
																															   	   \tcw{9}27\!: &\bn 9023\\
																																		  28\!: &\bn 1829\\
																																		  29\!: &\bn   26}
																																	  	  		& \tf{25\!: &\bn   205\\
																																					  26\!: &\bn  3608\\
																																					  27\!: &\bn 13744\\
																																					  28\!: &\bn 10268\\
																																				\tcw{2}29\!: &\bn   966\\
																																					 	  	&		}
																																								& \tf{25\!: &\bn    15\\
																																									  26\!: &\bn   771\\
																																									  27\!: &\bn  7878\\
																																									  28\!: &\bn 19241\\
																																									  29\!: &\bn  7984\\
																																								\tcw{2}30\!: &\bn   216} 
																																												& \tf{26\!: &\bn    96\\
																																													  27\!: &\bn  2035\\
																																													  28\!: &\bn 13440\\
																																													  29\!: &\bn 20057\\
																																												\tcw{2}30\!: &\bn  3808\\
																																															&		}   
\end{tabular}
}
$ $\\[5mm]
\resizebox{\textwidth}{!}{
\begin{tabular}{r|r|r|r|r|r|r|r|r|r|r|r}
$f_0$  				& 17       		& 18        	& 19      	 	& 20         	& 21			& 22			& 23			& 24		   & 25  		  	& 26			& 27		 \\ \hline
\# of $(f_0,*,*,9)$ 	& 6				& 5			   	& 5 			& 4				& 4				& 3				& 3				& 2			   & 2			  	& 1				& 1			 \\
\# of combin. types	& 38007			& 32492			& 24741			& 16747			& 10069			& 5306			& 2468			& 946		   & 331		  	& 76			& 23		 \\[10mm] \hline
\tf{f_2\!:&\bn\text{\# combin. types}\\
		 & 					  \\
		 &					  \\
		 &					  \\
		 &					  \\
		 & }	& \tf{26\!:&\bn    7\\
		 			  27\!:&\bn  268\\
					  28\!:&\bn 4047\\
					  29\!:&\bn18090\\
					  30\!:&\bn14763\\
				\tcw{2}31\!:&\bn  832}
					  			& \tf{27\!:&\bn   23\\
		 			 		   		  28\!:&\bn  596\\
					 				  29\!:&\bn 6519\\
									  30\!:&\bn18482\\
								\tcw{2}31\!:&\bn 6872\\
									  	   &	}
					  				  			& \tf{28\!:&\bn   45\\
		 			 		   						  29\!:&\bn 1057\\
					 								  30\!:&\bn 8578\\
													  31\!:&\bn13559\\
												\tcw{2}32\!:&\bn 1502\\
									  					   &	}
					  				  					   		&  \tf{29\!:&\bn   84\\
		 			 		   		 						   		   30\!:&\bn 1574\\
					 				 								   31\!:&\bn 8793\\
									 								   32\!:&\bn 6296\\
									 						    \tcw{241\!:}&\bn\tcw{1}\\
									 								   		&	}
																				&  \tf{30\!:&\bn  128\\
		 			 		   		 						   		   		 		   31\!:&\bn 2016\\
					 				 								   				   32\!:&\bn 6536\\
									 								   				   33\!:&\bn 1389\\
									 										    \tcw{241\!:}&\bn\tcw{1}\\
									 								   						&	}
																								&   \tf{31\!:&\bn  172\\
		 			 		   		 						   											32\!:&\bn 2064\\
					 				 																	33\!:&\bn 3070\\
									 																		 &		  \\
									 														     \tcw{241\!:}&\bn\tcw{1}\\
									 								   						   				 &	}
																											    &	\tf{32\!:&\bn  212\\
																	 		   		 						   		    33\!:&\bn 1563\\
																				 				 					    34\!:&\bn  693\\
																								 							 &		  \\
																								 			     \tcw{241\!:}&\bn\tcw{1}\\
																								 			   				 &	}
																															 	&  \tf{33\!:&\bn  209\\
		 			 		   		 						   																 		   34\!:&\bn  737\\
					 				 																								   		&		 \\
									 																								   	 	&		 \\
									 																						    \tcw{241\!:}&\bn\tcw{1}\\
									 								   														  				&	}
					  				  																											&  \tf{34\!:&\bn  163\\
		 			 		   		 						   																						   35\!:&\bn  168\\
					 				 																												   		&		 \\
									 																												   	 	&		 \\
									 																										    \tcw{241\!:}&\bn\tcw{1}\\
									 								   																		  				&	}
																																								&  \tf{35\!:&\bn   76\\
		 			 		   		 						   																										   		&		 \\
					 				 																																   	 	&		 \\
									 																																   	 	&		 \\
									 						  																								    \tcw{241\!:}&\bn\tcw{1}\\
									 								   																				   					 	&	}
																																									 			&  \tf{36\!:&\bn   23\\
		 			 		   		 						   																														   		&		 \\
					 				 																																				   	 	&		 \\
									 																																				   	 	&		 \\
									 						  																												    \tcw{241\!:}&\bn\tcw{1}\\
									 								   																										  				&	}
\end{tabular}
}
\vspace{0.5cm}
\caption{Complete enumeration of $f$-vectors of $4$-polytopes with $f_3=9$.}
\label{tab:fvector_f0_is_9}

\end{table}

In each of \Cref{tab:fvector_f0_is_7} and \Cref{tab:fvector_f0_is_8}, an entry is surrounded by ``$\star$'' symbols, since they are of particular interest:

\begin{itemize}[leftmargin= 70pt]
	\item[(14,28,21,7)] 
		The dual of the cyclic polytope $C_4(7)$ has this $f$-vector, we discuss two strategies to find an inscribed realization in \Cref{ssec:47interpol} and \Cref{ssec:47stereo}. 	
		It is well-known that the $f$-vector of $C_4(7)$ and its dual determine combinatorially unique polytopes	\cite[Chapter~6.3]{gruenbaum_convex_1967} and \cite[Table~6]{firsching_complete_2018}). 
	\item[(20,40,28,8)] 
		Besides the cyclic polytope $C_4(8)$, there are precisely two other neighborly $4$-polytopes on $8$ vertices \cite{altshuler_complete_1985}. 
		Their associated dual polytopes are the only polytopes that have this $f$-vector. 
		We show in \Cref{ssec:48obstruction} that the dual of $C_4(8)$ is not inscribable and that the other two duals are not inscribable in \Cref{sec:neighborly}. 
		In particular, the $f$-vector $(20,40,28,8)$ is not inscribable.
\end{itemize}
	
\begin{remark}
\label{rem:small_fvector}
We verified that the $20$ combinatorially distinct $4$-polytopes with $f_0+f_3\leq 15$ are all inscribable, so the associated $13$ $f$-vectors are also inscribable.
In combination with \Cref{cor:main}, the smallest $f$-vector (in terms of the sum $f_0+f_3$) that is not inscribable, must satisfy $16 \leq f_0+f_3 \leq 28$.
According to the complete classifications given in \cite[Table~2.3]{brinkmann_fvector_2016} and \cite[Table~6 and~7]{firsching_complete_2018}, there are ten $f$-vectors that satisfy $16 \leq f_0+f_3 \leq 19$ and that determine a combinatorially unique polytope. 
These $f$-vectors are:
	
	\begin{itemize}[leftmargin= 20pt]
	    \item $(9,26,26,9)$\\
	        We provide an inscribed realization of this polytope below.
		\item $(7,18,19,8)$, $(7,17,19,9)$, $(9,19,17,7)$ and $(7,18,22,11)$\\
			Inscribing coordinates for these $f$-vectors are provided in \Cref{appendix:inscribed_coordinates}.
		\item $( 8,19,20, 9)$ and $( 9,20,19, 8)$\\
			If we label the vertices of the first polytope by $1,\ldots,8$, then the facets are five tetrahedra, $1234$, $2568$, $2578$, $2678$ and $5678$,
			and four $3$-faces $12356$, $12457$, $134567$ and $23467$.
			Labeling the vertices of the second polytope $1,\ldots,9$, the facets are four tetrahedra, $1234$, $1235$, $1345$ and $6789$, and
			four $3$-faces $1245689$, $234678$, $235679$ and $345789$.
		\item  $(9,20,20, 9)$\\
			If we label the vertices by $1,\ldots,9$, then the facets of this self-dual polytope are the five tetrahedra, $1234$, $5678$, $5689$, $5789$, $6789$,
			and four $3$-faces $123567$, $124579$, $134679$ and $234569$. 
		\item $(11,22,18,7)$ and $(12,25,20,7)$\\
			Inscribing these polytopes involves many degrees of freedom (lots of vertices) and many constraints (many vertices per facet). 
			The difficulty of this task is less than, but comparable to, the quest of inscribing $C_4(7)$.
	\end{itemize}
We invite the reader to find inscribed realizations for the polytopes with $f$-vector $( 8,19,20, 9)$, $( 9,20,19, 8)$ and $( 9,20,20, 9)$ using a combination of the elementary polytope constructions pyramid, bipyramid, truncation and their dual operations.
There are more $f$-vectors that determine a combinatorially unique polytope, but for these $f_0+f_3\geq 20$, putting them outside the range of fully classified combinatorial types.
The difficulty in finding inscribed realizations varies significantly. 
The polytope with \(f\)-vector \((13,28,22,7)\) is very hard to inscribe, 
but the \(f\)-vectors $(10,25,28,13)$ and $(13,28,25,10)$ are easy enough to inscribe, realizations are provided below.
\end{remark}

In the remainder of this section we present inscribed realizations of three $4$-polytopes that are uniquely determined by the $f$-vectors: $T_1$, determined by $(9,26,26,9)$, $T_2$, determined by $(10,25,28,13)$, and its dual, $T_2^*$, determined by $(13,28,25,10)$.
This shows these \(f\)-vectors are inscribable.

\noindent
\textbf{Inscribed realization of $T_1$ with $f_{T_1} = ( 9,26,26, 9)$.}\\
The $f$-vector $f_{T_1}$ has a unique associated combinatorial type of $4$-polytope, see~\cite[Table~7]{firsching_complete_2018}.
If we label the vertices $0,1,\ldots,8$, then the facets are two tetrahedra $0123$ and $5678$ and $3$-faces
\[
	01245,\quad 01346,\quad 02347,\quad 123567,\quad 24578,\quad 14568\quad\text{and}\quad 34678.
\]
To realize this polytope (as a Schlegel projection into the facet $123567$), start with an octahedron $123567$ (with diagonals $17$, $26$ and $35$), 
and cone over vertex~$4$, placed in its center. This decomposes the octahedron into eight tetrahedra. 
Now stellarly subdivide tetrahedron $1234$ (resp.\ $4567$) into four tetrahedra by placing a vertex $0$ (resp.\ $8$) in its center. 
Now vertex~$4$ is contained in twelve tetrahedra, the remaining two tetrahedra, $0123$ and $5678$, are facets. 
Moving $0$ and $8$ sufficiently close to the centers of triangles $123$ and $567$ we can ensure that these twelve tetrahedra group together in pairs along triangles $124$, $134$, $234$, $456$, $457$ and $467$ to form six triangular bipyramids.
This $f$-vector is inscribable as an inscribed realization for $T_1$ is given by the following coordinates:
{\small
\[
\begin{blockarray}{rrrrrrrrr}
\begin{block}{rrrrrrrrr}
0 & 1 & 2 & 3 & 4 & 5 & 6 & 7 & 8 \\
\end{block}
\begin{block}{(rrrrrrrrr)}
-\frac{1}{2} & -1 & 0 & 0 & 0 & 0 & 0 & 1 & \frac{1}{2}\topstrut  \\[0.25em]
-\frac{1}{2} & 0 & -1 & 0 & 0 & 0 & 1 & 0 & \frac{1}{2}  \\[0.25em]
-\frac{1}{2} & 0 & 0 & -1 & 0 & 1 & 0 & 0 & \frac{1}{2}  \\[0.25em]
-\frac{1}{2} & 0 & 0 & 0 & -1 & 0 & 0 & 0 & -\frac{1}{2}\botstrut \\
\end{block}
\end{blockarray}\ .
\]
}

\medskip
\noindent
\textbf{Inscribed realization of $T_2$ with $f_{T_2} = (10,25,28,13)$.}\\
The $f$-vector $f_{T_2}$ has a unique associated combinatorial type of $4$-polytope~$T_2$, see~\cite[Table~2.3]{brinkmann_fvector_2016}. 
If we label the vertices $0,1,\ldots,9$, then the facets are nine tetrahedra 
\[
	0123,\quad 4568,\quad 4579,\quad 4589,\quad 4679,\quad 4689,\quad 5678,\quad 5789\quad\text{and}\quad 6789
\]
and $3$-faces
\[
  012456,\quad 013457,\quad 023467\quad\text{and}\quad 123567.
\]
To realize this polytope, start with the boundary complex of the $4$-dimensional prism over base tetrahedra $0123$ and $4567$ 
(with facets $0123$, $012456$, $013457$, $023467$, $123567$ and $4567$). Then subdivide tetrahedron $4567$ by placing two vertices, 
$8$ and $9$, onto the line segment connecting the two mid-points of $47$ and $56$, and cone to the other four edges to obtain tetrahedra 
$4589$, $4689$, $5789$, and $6789$. Complete the subdivision of $4567$ by adding tetrahedra $4568$ and $5678$, and $4579$ and $4679$.
This $f$-vector is also inscribable as an inscribed realization for $T_2$ is given by:
{\small
\[
\begin{blockarray}{rrrrrrrrrr}
\begin{block}{rrrrrrrrrr}
0 & 1 & 2 & 3 & 4 & 5 & 6 & 7 & 8 & 9\\
\end{block}
\begin{block}{(rrrrrrrrrr)}
 -1 & -1 &  1 &  1 & -1 & -1 &  1 & 1 & 0 & 0\topstrut \\
 -1 &  1 & -1 &  1 & -1 &  1 & -1 & 1 & 0 & 0 \\
  1 & -1 & -1 &  1 &  1 & -1 & -1 & 1 & -\frac{10}{13} & \frac{10}{13} \\[0.25em]
 -1 & -1 & -1 & -1 &  1 &  1 &  1 & 1 & -\frac{24}{13} & -\frac{24}{13} \botstrut \\
\end{block}
\end{blockarray}\ .
\]
}

\medskip
\noindent
\textbf{Inscribed realization of $T_2^*$ with $f_{T_2^*} = (13,28,25,10)$.}\\
We assume that the vertices of $T_2^*$ are labeled $0,1,\dots, 8,9,\text{A,B,C}$. 
Then the facets for $T_2^*$ are four tetrahedra
$028\text{C}$, $02\text{AC}$, $08\text{AC}$ and $28\text{AC}$
and six $3$-faces
\[
	01234589,\quad 012367\text{AB},\quad 014689\text{AB},\quad 134567,\quad 235789\text{AB}\quad\text{and}\quad 45679\text{B}.
\]
An inscribed realization for $T_2^*$ is given by the following coordinates:
{\small
\[
\begin{blockarray}{rrrrrrrrrrrrr}
\begin{block}{rrrrrrrrrrrrr}
0 & 1 & 2 & 3 & 4 & 5 & 6 & 7 & 8 & 9 & \text{A} & \text{B} & \text{C}\\
\end{block}
\begin{block}{(rrrrrrrrrrrrr)}
-2 & -2 & -2 & -2 & 0 & 0 & 0 & 0 & 2 & 2 & 2 & 2 & \frac{6}{37}\topstrut \\[0.25em]
-2 & -2 & 2 & 2 & -2 & 0 & 0 & 2 & -2 & -2 & 2 & 2 & \frac{6}{37} \\[0.25em]
2 & 2 & -2 & -2 & 0 & -2 & 2 & 0 & -2 & -2 & 2 & 2 & \frac{6}{37} \\[0.25em]
-1 & 1 & -1 & 1 & 3 & 3 & 3 & 3 & -1 & 1 & -1 & 1 & -\frac{133}{37}\botstrut \\
\end{block}
\end{blockarray}\ .
\]
}
Since $T_2$ and $T_2^*$ are dual to each other, this $f$-vector and its dual, $(10,25,28,13)$, are thus inscribable \emph{and} circumscribable.

\section{Circumscribability of cyclic polytopes}
\label{sec:cyclic_poly}

In this section, we study circumscribability of cyclic polytopes or, equivalently, inscribability of their duals.
The $d$-dimensional cyclic polytope on $k$ vertices is denoted by $C_d(k)$. 
Its combinatorial type is realized by the convex hull of $k$ increasing distinct points on the moment curve $\nu_d: \mathbb{R} \to \mathbb{R}^d$ sending $t$ to $(t,t^2, \ldots , t^d)$. Its facets are described purely combinatorially using Gale's evenness condition, see e.g.\ \cite[Chapter~0]{ziegler_lectures_1995}. 
The dual of the cyclic polytope~$C_d(k)$ is denoted by $C_d(k)^*$. 

We fix a labeling of the faces of $C_d(k)$ and of its dual:
We order the $k$ vertices of $C_d(k)$ and identify them with the numbers $\{1,2,\dots, k\}$.
The facets of its dual $C_d(k)^*$ are identified with an additional star $i^*$.
Each face of $C_d(k)$ is labeled by the set of vertex labels it contains.
To write a face label $\{i,j,k,l, \dots\}$ of $C_d(k)$, we abuse notation and write $ijkl\cdots$. 
For the corresponding dual face $\{i,j,k,l,\dots\}^*$ of $C_d(k)^*$, we write $(ijkl\cdots)^*$.
As a consequence, by taking facet intersections in the dual, faces of $C_d(k)^*$ are labeled by the set of facets they are contained in. In particular, the vertices of $C_d(k)^*$ are labeled by subsets of $\{1,2,\dots,k\}$ corresponding to facets of $C_d(k)$.

In dimension $d=4$, facets of $C_4(k)^*$ are combinatorially equivalent to $C_3(k-1)^*$, a wedge over a $(k-2)$-gon.
Motivated by Lemma~\ref{lem:classic_i}, we first look at the inscribed realization space these wedges in \Cref{ssec:wedges}.
In \Cref{ssec:47interpol,ssec:47stereo} we present two proofs that the cyclic polytope $C_4(7)$ is circumscribable.
In \Cref{ssec:48obstruction}, we show that the cyclic polytope $C_4(8)$ is not circumscribable using a geometric obstruction.
Finally, in \Cref{ssec:larger_obstruction} we use the argument for $C_4(8)$ and Gale's evenness condition to extend this obstruction to cyclic polytopes $C_d(k)$, where $k \geq d+4 \geq 8$. 

\subsection{The inscribed realization space of wedges over polygons}
\label{ssec:wedges}

In this section we describe the space of inscribed realizations of the facets of $C_4(k)^*$.
They are combinatorially isomorphic to a wedge over a $(k-2)$-gon, denoted by \(F_k\).

Inscribed realizations of $3$-polytopes up to M\"obius transformations correspond to feasible solutions of a set of linear constraints imposed on the set of external dihedral angles at the edges of the polytope \cite{rivin_characterization_1996}. 
As a corollary of Rivin's work, the realization space of a $3$-polytope up to M\"obius transformations is contractible. 
This does not extend to higher dimensions where universality holds \cite{adiprasito_universality_2015}.

The wedge $F_k$ has $f$-vector $(2k-6,3k-9,k-1)$.
Its facets consist of two $(k-2)$-gons, two triangles and $k-5$ quadrilaterals. 
Following \cite{richtergebert_realization_1996}, the dimension of the realization space of a $3$-polytope up to M\"obius transformation is $f_{1}-6$. 
Hence, the realization space of $F_k$ has dimension $3k-15$. 
The \emph{inscribed} realization space of $F_k$ up to M\"obius transformations has dimension $k-3$. For reasonably small $k$ this can be checked computationally using Rivin's linear program. 
It follows that the inscribed realization space of $F_k$ up to Euclidean isometries and homotheties is of dimension $k$.

The construction of explicit coordinates for an inscribed realization of $C_4(7)^*$ (see \Cref{ssec:47stereo}) is based on the following parametrizations of the space of inscribed realizations of~$F_k$, up to M\"obius transformations and up to Euclidean isometries and homotheties. 

\begin{proposition}
  \label{prop:moebius}
  The inscribed realization space of the wedge over a $(k-2)$-gon $F_k$ up to M\"obius transformations is homeomorphic to
  $$ \inter (\Delta^{k-5}) \times (0,\pi) \times I,$$
  where $\inter (\Delta^{k-5})$ denotes the interior of a $(k-5)$-dimensional simplex, $(0,\pi)$ determines the angle between the two $(k-2)$-gons of $F_k$, and $I$ is an open interval only depending on the position of the vertices of one of the $(k-2)$-gon of $F_k$.

  In particular, the realization space of $F_k$ is homeomorphic to an open $(k-3)$-ball.
\end{proposition}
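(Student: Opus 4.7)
The plan is to build an explicit parametrization of the space of inscribed realizations of the wedge \(F_k\), modulo the \(6\)-dimensional Möbius group of \(\mathbb{S}^2\), by \(\inter(\Delta^{k-5}) \times (0,\pi) \times I\). Label the vertices so that the two \((k-2)\)-gon facets are \(B = (1,2,\ldots,k-2)\) (bottom) and \(T = (1,2',3',\ldots,(k-3)',k-2)\) (top), with shared wedge edge \(\{1,k-2\}\); the remaining facets are the triangles \(\{1,2,2'\}\), \(\{k-3,(k-3)',k-2\}\) and the quadrilaterals \(Q_j=\{j,j+1,(j+1)',j'\}\) for \(2\leq j\leq k-4\).

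First, I would normalize the bottom polygon. In any inscribed realization, the vertices of \(B\) lie on the circle \(C=\aff(B)\cap\mathbb{S}^2\). Since the Möbius group of \(\mathbb{S}^2\) acts \(3\)-transitively, the positions of \(1\), \(2\) and \(k-2\) may be fixed to three prescribed points on \(\mathbb{S}^2\), which pins down \(C\). The remaining vertices \(3,\ldots,k-3\) must lie on the open arc of \(C\) from \(2\) to \(k-2\) (not containing \(1\)) in cyclic order; recording the \(k-4\) consecutive arc-length gaps and normalizing their sum to the length of that arc identifies their configuration space with the interior of a \((k-5)\)-simplex.

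Next, the top polygon lies in a plane \(\Pi'\) through the wedge edge. The dihedral angle between \(\aff(B)\) and \(\Pi'\), measured through the interior of \(F_k\), is the free parameter in \((0,\pi)\), and determines the circle \(C'=\Pi'\cap\mathbb{S}^2\). The final parameter is the position of \(2'\) on an arc of \(C'\); the crucial observation is that once \(2'\) is chosen, the remaining top vertices are forced. Indeed, for each \(j\geq 2\), the concyclicity of the facet \(Q_j\) together with \((j+1)'\in C'\) determines \((j+1)'\) uniquely as the second intersection of the circumcircle of \(\{j,j+1,j'\}\) with \(C'\). Thus \(3',\ldots,(k-3)'\) are forced iteratively from \(2'\), and the end-triangle \(\{k-3,(k-3)',k-2\}\) is automatically concyclic because any three points on \(\mathbb{S}^2\) are. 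Reading the parameters off a given realization yields the continuous inverse map.

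The hard part is to show that the set \(I\) of admissible positions for \(2'\) is a nonempty open interval depending only on the bottom polygon. Each step \(j'\mapsto(j+1)'\) is a Möbius involution of \(C'\) coming from the pencil of circles through \(\{j,j+1\}\), hence is continuous and monotone on each open sub-arc of its domain, and the composition through \(j=2,\ldots,k-4\) is again a Möbius map of \(C'\). The constraints defining \(I\) — that \(2',3',\ldots,(k-3)'\) appear in the correct cyclic order on the arc of \(C'\) from \(1\) to \(k-2\), producing a convex polygon \(T\) and convex quadrilaterals \(Q_j\) — form a finite collection of open monotone conditions, so they cut out an open interval; nonemptiness follows from Rivin's linear-programming characterization of inscribed \(3\)-polytopes, which guarantees that inscribed realizations of \(F_k\) exist. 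A change of dihedral angle rotates \(\Pi'\) and \(C'\) rigidly about the axis through \(1\) and \(k-2\), carrying the entire forcing construction along with it, so \(I\) is independent of the dihedral angle. Finally, \(\inter(\Delta^{k-5}) \times (0,\pi) \times I\) is a product of open cells and is therefore homeomorphic to an open \((k-3)\)-ball, giving the last assertion of the proposition.
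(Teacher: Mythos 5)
Your parametrization is essentially the same as the paper's: after Möbius-normalizing three points (the two wedge vertices and one more on the first $(k-2)$-gon), the remaining bottom vertices trace out $\inter(\Delta^{k-5})$, the opening angle gives the $(0,\pi)$ factor, and the position of the first top vertex gives $I$, with the rest of the top polygon forced one quadrilateral at a time. Where you differ from the paper in details: you invoke Rivin's characterization for nonemptiness of $I$, whereas the paper exhibits a realization directly by placing the free vertex symmetrically; and you phrase the forcing step as a pencil-of-circles Möbius involution of $C'$, which is a cleaner framing than the paper's informal ``north/south'' discussion for the interval structure of $I$.

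One step is stated imprecisely. You write that changing the dihedral angle ``rotates $\Pi'$ and $C'$ rigidly about the axis through $1$ and $k-2$, carrying the entire forcing construction along with it.'' This is not literally what happens: the circumcircle of $\{j,j+1,j'\}$ used in the forcing step has two of its three defining points ($j$ and $j+1$) on $C$, which does not rotate, so this circumcircle deforms rather than being carried along by a rigid motion. The conclusion that $I$ is independent of the dihedral angle is nevertheless correct, but it needs justification — for instance, in stereographic projection from one wedge vertex the other wedge vertex becomes a finite point $O$, both circles become lines through $O$, and the power of $O$ with respect to the circle through $\{j,j+1,j',(j+1)'\}$ gives $|Oq_{j+1}|\cdot|Oq_j|=|Op_{j+1}|\cdot|Op_j|$, which involves only distances from $O$ and is manifestly independent of the angle between the two lines. (The paper's own ``sketch of proof'' is similarly brief at exactly this point, so this is a gap you share with the original rather than a divergence from it.)
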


\begin{proof}[Sketch of proof]
We refer the reader to the picture on the left in \Cref{fig:proj}.
Assume that $F_k$ is inscribed on $\mathbb{S}^2$. 
We use stereographic projection $\pi_N$ as described in \Cref{ssec:background}. 
After applying a suitable M\"obius transformation we can assume that the two vertices of $F_k$ contained in the two $(k-2)$-gons are mapped to north pole $N$ and south pole~$S$ of~$\mathbb{S}^2$ and that a third point determining the circle $c \subset \mathbb{S}^2$ of the first $(k-2)$-gon of $F_k$ is mapped to $(1,0)$. 
We are now free to arbitrarily choose $k-5$ points on $c$ between points $(1,0)$ and $N$. 
That is, we choose $k-5$ points on a line segment yielding the first (and largest) factor of the inscribed realization space $\inter(\Delta^{k-5})$.

An inscribed realization of $F_k$ is now determined by the position of one more vertex, $q_0 \in \mathbb{S}^2 \setminus c$ (after M\"obius transformations in the ``front hemisphere'' of $\mathbb{S}^2$): The triple $(q_0,S,N)$ determines a circle $d \subset S$ containing all the vertices of the second $(k-2)$-gon. Moreover, the triple $(q_0,(1,0),p_1)$ determines a circle $e \subset S$ containing all vertices of one of the quadrilaterals of $F_k$. It now follows that the fourth point of this quadrilateral, $q_1$, is determined as the intersection $d \cap e$. By iteration, the coordinates of the remaining $k-5$ vertices of $F_k$ are determined and lead to at most one inscribed realization. 

If $q_0$ is chosen at the latitude of $(1,0)$, this configuration leads, in fact, to an inscribed realization for all longitudes strictly between $0$ and $\pi$: symmetry around the $SN$-axis of $\mathbb{S}^2$ shows that all $q_i$ must then have the same latitude as all $p_i$, $1 \leq i \leq k-5$. The same is true for a starting latitude of $q_0$ contained in a sufficiently small interval around the latitude of $(1,0)$. Denote such a latitude as \emph{valid}. In general, for a given latitude to be valid, $q_{i-1}$ must be further ``south'' than $q_{i}$, $1 \leq i \leq k-5$. 
It follows that the set of valid latitudes is an open interval, as moving $q_0$ ``north'' (resp.\ ``south'') eventually causes $q_1$ to move past $q_0$ (resp.\ $q_2$), and likewise for further \(q_i\). 

Moreover, a valid latitude for $q_0$ is not affected by varying the longitude of $q_0$ (i.e., by varying the opening angle of the wedge~$F_k$): For instance, observe that the line segment $q_0 q_1$ under rotation around the $SN$-axis must remain on both the planes defined by $(q_0,(1,0),p_1)$ and $(N,S,q_0)$, and $q_1$ must remain on $\mathbb{S}^2$ with fixed latitude. In particular, longitude and latitude of $q_0$ can be described by points in $(0,\pi) \times I$.

Altogether, every point in $ \inter (\Delta^{k-5}) \times (0,\pi) \times I$ corresponds to a unique inscribed realization of $F_k$. 
Conversely, since $N$, $S$, $(1,0)$ and the hemisphere of $q_0$ are fixed, an inscribed realization of $F_k$ up to M\"obius transformations corresponds to a unique point in $ \inter (\Delta^{k-5}) \times (0,\pi) \times I$.
\end{proof}

\begin{figure}[!htbp]
  \begin{center}
    \raisebox{1.88cm}{\includegraphics[height=6cm]{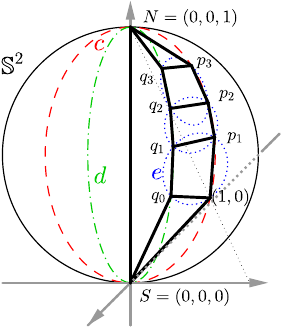}} \hfill \includegraphics[height=8cm]{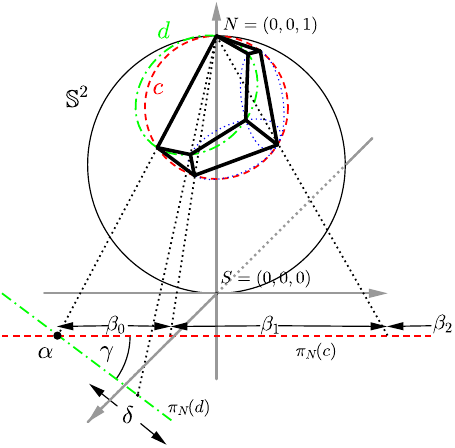}
  \end{center}
\caption{Left: Inscribed realization space of $F_k$, $k=8$, up to M\"obius transforms. Right: Inscribed realization of $F_k$, $k=7$, up to Euclidean isometries and homotheties. Parameters as given in  \Cref{prop:wedge_real} are indicated.\label{fig:proj}}
\end{figure}

The next result provides a parametrization of the inscribed realizations of $F_k$ up to Euclidean isometries and homotheties, which implies that it is contractible.
We use this parametrization in \Cref{ssec:47stereo}.

\begin{proposition}
\label{prop:wedge_real}
The inscribed realization space of the wedge over a $(k-2)$-gon $F_k$ up to Euclidean isometries and homotheties is parametrized by
\[
\mathcal{R}_{F_k}:=\{(\alpha,\beta,\gamma,\delta)~:~\alpha\in\R^2,\ \beta\in (\R^+)^{k-4},\ \gamma\in(0,\pi), \delta \in I_{\alpha,\beta,\gamma}\} ,
\]
where $I_{\alpha,\beta,\gamma}$ is an open interval.
In particular, $\mathcal{R}_{F_k}$ is contractible and has dimension $k$.
\end{proposition}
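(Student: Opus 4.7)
I would prove \Cref{prop:wedge_real} by adapting the argument of \Cref{prop:moebius} to the smaller symmetry group. The M\"obius group of $\mathbb{S}^2$ has real dimension $6$, while the subgroup induced by Euclidean isometries and homotheties of $\R^3$ acting on a fixed sphere is $\mathrm{SO}(3)$, of dimension $3$. Hence the present quotient has dimension $(k-3)+3=k$, which matches the claim; the three extra degrees of freedom correspond precisely to geometric data that was absorbed by M\"obius normalization in \Cref{prop:moebius}.

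First, I would use the $7$-dimensional similarity group to place the circumscribing sphere at the unit sphere centered at the origin and to put one of the two shared vertices $v_1,v_2$ of the $(k-2)$-gon facets $P_1,P_2$ of $F_k$ at the north pole $N$, say $v_1=N$; this uses $6$ of the $7$ dimensions and leaves a $1$-parameter family of rotations about the $N$-axis. I would then build the realization iteratively from the parameters as follows:
\begin{itemize}
\item $\alpha\in\R^2$ records the position of $v_2\in\mathbb{S}^2\setminus\{N\}$, for instance via the stereographic projection $\pi_N$; one coordinate of $\alpha$ absorbs the remaining rotational freedom about the $N$-axis, so after fixing $\alpha$ the configuration is fully normalized;
\item $\beta\in(\R^+)^{k-4}$ records the positions of the $k-4$ non-shared vertices of $P_1$ along its circle, parametrized for example by successive arc-length gaps from $v_1$;
\item $\gamma\in(0,\pi)$ is the dihedral angle between the planes of $P_1$ and $P_2$;
\item $\delta$ parametrizes the position of a single initial non-shared vertex of $P_2$, playing the role of $q_0$ in the proof of \Cref{prop:moebius}; the remaining $k-5$ non-shared vertices of $P_2$ are then determined iteratively by the coplanarity constraints coming from the $k-5$ quadrilateral facets of $F_k$.
\end{itemize}

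Next I would characterize $I_{\alpha,\beta,\gamma}$ as the open set of valid values of $\delta$: a value is valid precisely when the iterated construction produces the vertices of $P_2$ in the correct cyclic order on its circle, exactly as in the proof of \Cref{prop:moebius}. This is an open condition whose boundary corresponds to two consecutive vertices of $P_2$ colliding, so $I_{\alpha,\beta,\gamma}$ is an open interval with endpoints $a(\alpha,\beta,\gamma)<b(\alpha,\beta,\gamma)$ depending continuously on $(\alpha,\beta,\gamma)$. Contractibility of $\mathcal{R}_{F_k}$ then follows because rescaling $\delta$ to $(0,1)$ gives a homeomorphism
\[
\mathcal{R}_{F_k}\;\cong\;\R^2\times(\R^+)^{k-4}\times(0,\pi)\times(0,1),
\]
so $\mathcal{R}_{F_k}$ is a product of contractible spaces of total dimension $2+(k-4)+1+1=k$.

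The main obstacle I anticipate is the rigorous verification that $(\alpha,\beta,\gamma,\delta)\mapsto$ ``inscribed $F_k$ up to Euclidean isometries and homotheties'' is a bijection. One must show that the iterative coplanarity construction of $P_2$'s remaining vertices always lands on the intended circle (a geometric lemma essentially contained in the proof of \Cref{prop:moebius}) and that distinct tuples yield inequivalent inscribed realizations. Both directions are close adaptations of the argument for \Cref{prop:moebius}, with the three extra parameters bookkeeping the difference in dimension between the M\"obius group and the group of Euclidean isometries and homotheties acting on the sphere.
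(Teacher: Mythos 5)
Your high-level plan follows the paper's (adapt the M\"obius argument of \Cref{prop:moebius}, account for three extra degrees of freedom, then argue contractibility), and the dimension count $(k-3)+3=k$ is correct. However, there is a genuine gap in your normalization of $P_1$. After placing $v_1$ at $N$ you record $v_2$'s position as $\alpha=\pi_N(v_2)\in\R^2$ and then claim $\beta$ records the positions of the remaining $k-4$ vertices of $P_1$ ``along its circle.'' But the circle $c$ on $\mathbb{S}^2$ through $v_1=N$ and $v_2$ is not determined by those two points: there is a one-parameter pencil of such circles (equivalently, a one-parameter family of lines through $\pi_N(v_2)$ in the plane), and arc-length gaps along $c$ are meaningless until $c$ has been selected. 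Saying ``one coordinate of $\alpha$ absorbs the remaining rotational freedom'' conflates two distinct issues: the residual $1$-parameter rotation about the $N$-axis that is still unquotiented, and the $1$-parameter choice of $c$ that you never make. These two unresolved dimensions happen to cancel in your count, which is why the total comes out to $k$, but the map $(\alpha,\beta,\gamma,\delta)\mapsto$ (inscribed $F_k$ up to similarity) is neither well-defined (no $c$) nor injective (rotation not normalized).

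The paper fixes both problems at once: it uses the residual rotation to normalize $\pi_N(c)$ to be a line parallel to the $x$-axis, so that $c$ is thereafter determined by $\alpha$ alone (it is the horizontal line through $\alpha$), with $\alpha_2$ encoding the line's $y$-intercept and $\alpha_1$ the position of $v_2$ on it. With this normalization $\alpha\in\R^2$ is a genuine free parameter and $\beta\in(\R^+)^{k-4}$ (gaps along the now-determined line on the positive-$x$ side of $\alpha$) completes the parametrization of $P_1$. Your contractibility argument (rescaling $\delta$ to $(0,1)$) is plausible modulo continuity of the interval endpoints; the paper instead slices by $\gamma$ and exhibits each slice as an open polyhedron contracting to a basepoint $v=((0,0),\beta^c,\dots,\beta^c,\gamma)$, then contracts the resulting segment over $\gamma$. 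Both routes are workable once the parametrization itself is repaired.
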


\begin{proof}[Sketch of proof]

  We use the same setup as in the previous statement. 
  The main differences are that we now have to account for three more degrees of freedom.

We denote the circle that contains the vertices of one \((k-2)\)-gon \(c\) and the other such circle \(d\).  
After applying a suitable transformation we can assume that one vertex of the wedge edge is at $N$ and $\pi_N (c)$ is a line parallel to the $x$-axis. 
We can further assume that the projection of the other vertex of the wedge edge has 
the smallest \(x\)-value among the vertices on \(\pi_N (c)\) and 
the smallest \(y\)-value among the vertices on \(\pi_N (d)\). 
In the previous proof we always had $\alpha = (0,0)$ but here it can be freely chosen, adding two of the extra three degrees of freedom. 
The third extra degree of freedom arises from now placing the remaining $(k-4)$ vertices 
(instead of $(k-5)$ vertices in the case of M\"obius transformations) of the first $(k-2)$-gon onto $\pi_N(c)$ on the positive \(x\) side of $\alpha$. 
This yields an open $(k-2)$-ball $(\alpha,\beta)$.

The remaining two parameters relate to the second \((k-2)\)-gon. 
The first of these parameters is the angle between the lines \(\pi_N (c)\) and \(\pi_N (d)\), 
denoted by \(\gamma\), which can take any value between \(0\) and \(\pi\). 
Finally, we let \(I_{\alpha,\beta,\gamma}\) be the set of positions for the third vertex of the second $(k-2)$-gon that determine a valid inscribed realization.

It is apparent that this parameterization has dimension \(k\).
 The statement now follows from observing that for a fixed \(\gamma\), 
 \(I_{\alpha,\beta,\gamma}\) is determined by strict linear inequalities in \(\alpha\) and \(\beta\), and contains at least one element. 
 In particular, it contains the point \(v=((0,0),\beta^c,\beta^c,\gamma)\) for any fixed \(\beta^c\). 
 Therefore \(I_{\alpha,\beta,\gamma}\) is an open polyhedron for each choice of \(\gamma\), and is contractable to \(v\). 
 The set of \(v\) for all values of \(\gamma\) is an open segment, which is contractible, so the realization space \(\mathcal{R}_{F_k}\) is contractible.
 \end{proof}

\subsection{Circumscribing $C_4(7)$ using interpolation}
\label{ssec:47interpol}

One approach to test whether a given polytope is circumscribable consists in writing facet normals in terms of the vertex coordinates and checking if they lie on a sphere. 
This is the same as checking that the dual of a circumscribed polytope is inscribed.

This approach works well for the cyclic polytope $C_4(7)$ because the facet normals (at least generically) uniquely determine a quadratic hypersurface by interpolation. 
We consider real quadratic forms in $(n+1)$ variables and the action of $\gl_{n+1}(\R)$ on the vector space of all quadratic forms given by change of coordinates, i.~e.~$(M.q)(x) = q(Mx)$. 
If a quadratic form~$q$ is represented by the symmetric matrix $A$, that is $q(x) = x^T A x$, then $M\in\gl_{n+1}(\R)$ acts on~$A$ via $M. A := M^T A M$.

\begin{proposition}
    Let $q(x) = x^T A x$ be a quadratic form in $(n+1)$ variables $x_0,x_1,\ldots,x_n$.
    The quadratic form $q$ can be transformed into the quadratic form defined by $x_0^2 - \sum_{i=1}^n x_i^2$ over~$\R$ if and only if the signature of $A$ is $(1,n)$, i.~e.~$A$ has $1$ positive and $n$ negative eigenvalues.
\end{proposition}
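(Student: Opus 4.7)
The plan is to recognize the statement as a restatement of Sylvester's law of inertia for the congruence action $A \mapsto M^T A M$ of $\gl_{n+1}(\R)$ on real symmetric matrices, and to prove each direction separately.

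For the forward direction, assume that the signature of $A$ is $(1,n)$. First I would apply the spectral theorem to obtain an orthogonal matrix $P$ with $P^T A P = \mathrm{diag}(\lambda_0,\lambda_1,\ldots,\lambda_n)$. After permuting the columns of $P$ we may assume $\lambda_0 > 0$ and $\lambda_1,\ldots,\lambda_n < 0$. Setting $D := \mathrm{diag}\bigl(\lambda_0^{-1/2},(-\lambda_1)^{-1/2},\ldots,(-\lambda_n)^{-1/2}\bigr)$ and $M := PD \in \gl_{n+1}(\R)$, a direct computation yields $M^T A M = \mathrm{diag}(1,-1,\ldots,-1)$, which means $(M.q)(x) = q(Mx) = x_0^2 - \sum_{i=1}^n x_i^2$, as desired.

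For the reverse direction, suppose $M^T A M = \mathrm{diag}(1,-1,\ldots,-1)$ for some $M \in \gl_{n+1}(\R)$. Here I would invoke Sylvester's law of inertia: the number of positive (respectively negative) eigenvalues of a real symmetric matrix equals the maximum dimension of a subspace of $\R^{n+1}$ on which the associated quadratic form is positive (respectively negative) definite, and this maximum is plainly invariant under a nondegenerate linear change of coordinates. Since $\mathrm{diag}(1,-1,\ldots,-1)$ manifestly has signature $(1,n)$, so must $A$.

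There is no real obstacle in this argument; the entire proof amounts to a specialization of Sylvester's theorem to the case where the positive and negative parts have dimensions $1$ and $n$. The only minor care needed is to make the rescaling and permutation explicit in the forward direction, so that an actual $M \in \gl_{n+1}(\R)$ is produced rather than just an abstract congruence.
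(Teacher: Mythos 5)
Your proposal is correct and matches the paper, which disposes of the statement by citing Sylvester's law of inertia directly. You simply unpack the same fact: the spectral-theorem-plus-rescaling argument for the forward direction is the standard proof that any real symmetric matrix is congruent to a diagonal matrix of $\pm 1$'s, and the reverse direction is exactly the invariance of inertia under congruence that Sylvester's theorem asserts.
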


\begin{proof}
    This is Sylvester's law of inertia, see \cite[Section~20.3]{DymLA}.
\end{proof}

Let $t_1 < t_2 < \dots < t_7$ be the values defining the vertices of $C_4(7)$ on the moment curve, and recall that $i$ denotes the vertex $(t_i,t_i^2,t_i^3,t_i^4)$, where $1\leq i \leq 7$.
The $14$ facets of $C_4(7)$ can be obtained by Gale's evenness condition:
\[
    1234, 1237, 1245, 1256, 1267, 1347, 1457, 1567, 2345, 2356, 2367, 3456, 3467, 4567.
\]
The facet normal vectors of the facets $ijkl$ can be computed by Cramer's rule as the kernel of the matrix
\[
    \begin{pmatrix}
        1 & t_i & t_i^2 & t_i^3 & t_i^4 \\
        1 & t_j & t_j^2 & t_j^3 & t_j^4 \\
        1 & t_k & t_k^2 & t_k^3 & t_k^4 \\
        1 & t_l & t_l^2 & t_l^3 & t_l^4 
    \end{pmatrix}.
\]
This gives $14$ points, \(\{r_i\}_{i=1, \ldots, 14}\), in $\R\P^4$ that we want to place on a quadratic hypersurface. 
Since the vector space of quadratic forms in $5$ variables has dimension $15$, $14$ generic points uniquely determine a quadratic form vanishing at these $14$ points and we can compute its equation using Lagrange interpolation. To set this up, let $\texttt{m}$ be the row vector of the $15$ monomials of degree~$2$ in $5$ variables in a fixed order. Writing $r_i$ for the $14$ points in $\R\P^4$, we create the $14\times 15$ matrix $(\texttt{m}(r_i))_{i=1,\ldots,14}$. The coefficient vectors of the quadratic forms vanishing at these $14$ points are exactly the elements of the kernel of this matrix.

\begin{proposition}\label{prop:interpolation}
    Let $t_1 = 0$, $t_2 = 1$, $t_3 = 3$, $t_4 = 7$, $t_5 = 11$, $t_6 = 13$, $t_7 = 21$. 
    The representing matrix of the (up to scaling unique) quadratic form vanishing on the $14$ facet normals of the cyclic polytope $C_4(7)$ 
    defined by these $7$ values is 
    \[M=
      \begin{pmatrix}
        22237 & 130328 & 1323281 & 15129020 & 184061477\\
        130328 & 339339 & 2534532 & 27498471 & 344552208\\
        1323281 & 2534532 & 12297285 & 106450344 & 1304584281\\
        15129020 & 27498471 & 106450344 & 677359683 & 7142515380\\
        184061477 & 344552208 & 1304584281 & 7142515380 & 59989246317
      \end{pmatrix}. 
    \]
    The quadratic form associated to this matrix has signature $(1,4)$.
\end{proposition}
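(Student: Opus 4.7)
The proposition is a concrete algebraic assertion about a specific $5\times 5$ integer matrix, so the plan is to verify it by direct, if tedious, computation. The work breaks into a normal-vector computation, a vanishing check, and a signature check.

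First, I would compute each of the 14 facet normals explicitly. For each facet $ijkl$ picked out by Gale's evenness condition, the normal is obtained (up to scaling) as the vector of signed $4\times 4$ minors of the $4\times 5$ matrix with rows $(1,t_a,t_a^2,t_a^3,t_a^4)$ for $a\in\{i,j,k,l\}$, evaluated at the given values $t_1=0,\ldots,t_7=21$. These minors are Vandermonde-like and can be computed in closed form, yielding 14 explicit points $r_1,\ldots,r_{14}\in\mathbb{Q}^5$. Then I would verify that $r_\ell^T M r_\ell = 0$ for each $\ell=1,\ldots,14$. Since the space of quadratic forms in $5$ variables has dimension $15$, this shows that $M$ lies in the kernel of the $14\times 15$ interpolation matrix whose $\ell$-th row records the 15 degree-$2$ monomials evaluated at $r_\ell$. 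To establish the ``up to scaling unique'' part of the claim, I would additionally exhibit one nonvanishing $14\times 14$ minor of this interpolation matrix, which forces its kernel to be exactly one-dimensional and therefore spanned by $M$.

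Second, for the signature, I would compute the five leading principal minors $D_1,\ldots,D_5$ of $M$ and apply the Jacobi--Sylvester criterion: for a symmetric matrix with nonzero leading principal minors, the number of negative eigenvalues equals the number of sign changes in the sequence $1, D_1, D_2, D_3, D_4, D_5$. Signature $(1,4)$ then corresponds to exactly four sign changes in this sequence. As a sanity check, one could alternatively compute the characteristic polynomial of $M$ and count sign variations via Descartes' rule of signs, or symbolically diagonalize $M$ via successive Schur complements.

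The main obstacle is purely arithmetic rather than conceptual: the entries of $M$ are large integers (up to roughly $6\times 10^{10}$), and the same will be true of the $4\times 4$ minors defining the facet normals and of the principal minors of $M$. All computations must be performed in exact rational arithmetic to avoid spurious rounding, but they are entirely routine and can be delegated to a computer algebra system.
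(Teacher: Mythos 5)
Your proposal is correct and takes essentially the same route as the paper: compute the facet normals via $4\times 4$ minors of the Vandermonde-type matrix, verify that $M$ annihilates them and that the $14\times 15$ interpolation matrix has rank $14$, and determine the signature by exact arithmetic. The paper's own proof is just a terse pointer to this same computation; your additional detail (Jacobi--Sylvester on the leading principal minors, exhibiting a nonvanishing $14\times 14$ minor) simply makes explicit how one certifies the result.
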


\begin{proof}
    This result can be computed in the way described above. 
    The fact that the quadratic form vanishing at these $14$ points is unique up to scaling is equivalent to the fact that the matrix 
    $(\texttt{m}(r_i))_{i\in \{1,\ldots,14\}}$ has rank $14$. 
\end{proof}

Since the cyclic polytope $C_4(7)$ is the only combinatorial type with $f$-vector $(7,21,28,14)$ 
(see e.g.\ \cite[Chapter~6.3]{gruenbaum_convex_1967} and \cite[Table~6]{firsching_complete_2018}), 
the following result says that every polytope with this $f$-vector is strongly circumscribable in the sense of Chen and Padrol \cite[Section~2.1]{chen_scribability_2017}.
\begin{theorem}
    The $4$-dimensional cyclic polytope with $7$ vertices is strongly circumscribable.
\end{theorem}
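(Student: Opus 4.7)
The plan is to combine \Cref{prop:interpolation} with Sylvester's law of inertia (the proposition stated immediately before it). The realization of $C_4(7)$ with the seven chosen parameter values has 14 facets, each corresponding via Cramer's rule to a projective normal $(c_0,c_1,c_2,c_3,c_4)\in\R\P^4$ encoding the facet equation $c_0+c_1x_1+c_2x_2+c_3x_3+c_4x_4=0$. Circumscribability to the unit sphere centered at the origin is equivalent to each such projective normal satisfying $c_1^2+c_2^2+c_3^2+c_4^2=c_0^2$, that is, lying on the standard Lorentz quadric, whose matrix $\operatorname{diag}(-1,1,1,1,1)$ (after reordering coordinates) has signature $(1,4)$.

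By \Cref{prop:interpolation}, these 14 projective normals all lie on the quadric cut out by $M$, and $M$ has signature $(1,4)$. Sylvester's law accordingly provides $T\in\gl_5(\R)$ with $T^T M T=\operatorname{diag}(1,-1,-1,-1,-1)$. The projective change of coordinates induced by $T^{-1}$ then sends the 14 facet normals to points on the Lorentz quadric and, correspondingly, transforms $C_4(7)$ into a combinatorially equivalent polytope whose every facet is tangent to the unit sphere. Because the $f$-vector $(7,21,28,14)$ determines a unique combinatorial type (namely $C_4(7)$), this proves strong circumscribability in the sense of Chen and Padrol.

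The step that needs genuine care is checking that the projective transformation $T^{-1}$ does not degenerate the polytope: no facet hyperplane should be sent to infinity and the image must remain a bounded $4$-polytope containing the origin in its interior, combinatorially isomorphic to $C_4(7)$. This comes down to verifying that the 14 transformed projective normals lie in a single connected component of the double cone defined by $\operatorname{diag}(1,-1,-1,-1,-1)$, corresponding to the polytope enclosing the sphere rather than being enclosed by it. Once the correct component is selected (replacing $T$ by $-T$ if necessary), the tangency of each facet to $\mathbb{S}^3\subset\R^4$ follows from the Lorentz equation, and the transformed polytope is the desired circumscribed realization.
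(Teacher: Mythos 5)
Your approach is the same as the paper's: interpolate a quadric $M$ through the 14 facet normals, observe its signature is $(1,4)$, diagonalize by a congruence $T$, and transport the normals to the Lorentz cone. The paper also passes through the dual quadric $M^{-1}$ to phrase the tangency condition, but that is inessential; your direct translation to the normal picture is fine.

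The gap is in the last paragraph. You correctly identify that the whole argument hinges on verifying that the fourteen transformed normals $T^{-1}r_i$ lie in a \emph{single} nappe of the Lorentz cone, but you then treat this as a matter of convention rather than a fact requiring proof. Replacing $T$ by $-T$ negates \emph{all} the $T^{-1}r_i$ simultaneously, so it only changes which nappe the whole set occupies; it cannot repair the situation in which the fourteen vectors are split between the two nappes. That split case is exactly what could cause the projective image of $C_4(7)$ (equivalently, of $C_4(7)^*$) to become unbounded or to lose its combinatorial type, and there is no soft reason ruling it out: a convex polytope with vertices on a signature-$(1,4)$ quadric need not lie inside the quadric in the relevant sense. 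The paper closes this gap by producing an explicit rational matrix $Q$ with $Q^\top M Q$ diagonal with a single positive entry in position $(1,1)$, and then checking by direct computation that the first coordinate of $Q^{-1}r_i$ is negative for every $i \in \{1,\dots,14\}$, i.e., the transported normals all land in the same nappe. Without that finite computation (or some substitute argument), you have established only weak circumscribability, not the strong version claimed.
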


\begin{proof}
  For the choice of parameters $t_1 = 0$, $t_2 = 1$, $t_3 = 3$, $t_4 = 7$, $t_5 = 11$, $t_6 = 13$, $t_7 = 21$, 
  the corresponding cyclic polytope $C_4(7) = \conv\{\nu_4(t_i)\colon i=1,\ldots,7\}$ 
  has the property that the outer facet normal vectors embedded into $\R\P^4$ via $x\mapsto (1:x)$ lie on a 
  quadric defined by the quadratic form represented by $M$ of signature $(1,4)$, by \Cref{prop:interpolation}. 
  This means that the facets of this realization of $C_4(7)$ are tangent to the quadric hypersurface projectively dual to the quadric defined by $M$, 
  which is given by the inverse of $M$, see for example \cite[Chapter~1]{GKZ}. 
  The signature of the inverse matrix is still $(1,4)$, implying that the given realization is circumscribed to a quadric 
  with the signature of the quadratic form $x_0^2 - x_1^2 - x_2^2 - x_3^2 - x_4^2$. 
  This quadric can therefore be transformed by a projective transformation into the unit sphere in $\R^4$ embedded in $\R\P^4$ via $x\mapsto (1:x)$.
  This shows the weak circumscribability of \(C_4(7)\).
  
  We will show strong circumscribability by transforming \(M\) to its rational canonical form.
    The following matrix 
    \[Q=
      \begin{pmatrix}
        1 & -\frac{130328}{22237} & \frac{10792760433}{858136931} & -\frac{5642531895}{81241733} & \frac{505870365}{2359789} \\
        0 & 1 & -\frac{10554561644}{858136931} & \frac{6857048059}{81241733} & -\frac{1313262412}{2359789} \\
        0 & 0 & 1 & -\frac{1509354537}{81241733} & \frac{506139238}{2359789} \\
        0 & 0 & 0 & 1 & -\frac{62410180}{2359789} \\
        0 & 0 & 0 & 0 & 1
      \end{pmatrix}
    \]
    makes \(Q^\top MQ\) diagonal, the rational canonical form of \(M\).
  The quadratic form determined by \(Q^\top MQ\) is nondegenerate, and therefore removing the origin from the quadric yields two connected components.
  To prove strong circumscribability, we need to show that the vertices of \(C_4(7)^*\) still lie on a common component after the transformation.
     Since the only positive entry on the diagonal of \(Q^\top MQ\) is the first one, and the first entry of \(Q^{-1}r_i\) is negative for every \(i \in \{1,\ldots,14\}\), 
     all the \(r_i\) lie on the negative connected component of the quadric determined by \(Q^\top MQ\).
\end{proof}

\subsection{Circumscribing $C_4(7)$ using stereographic projection}
\label{ssec:47stereo}

In this section, we present a circumscribed realization of \(C_4(7)\) with explicit coordinates for its stereographic projection through a well-chosen vertex. 
To do this we rely on the realization space of the inscribed wedge described in \Cref{ssec:wedges}.

Consider the polytope \(C_4(7)^*\). 
Up to cyclic symmetry of $[7]$, there are two combinatorial types of vertices in \(C_4(7)^*\).
The first type consists of the seven vertices in the orbit of \((1234)^*\), and the second, of those in the orbit of \((1245)^*\). 
We stereographically project \(C_4(7)^*\) from the vertex $(1234)^*$ onto a generic hyperplane. 
By \Cref{lem:projection}, since $C_4(7)^*$ is simple, the image of the three facets labeled $5^*,6^*$, and $7^*$ form a polytopal subdivision of a convex tetrahedron.
Further, if $C_4(7)^*$ is inscribed, then the resulting subdivision is Delaunay~\cite[Proposition~13]{gonska_inscribable_2013}.
The result of the projection is combinatorially equivalent to the subdivision illustrated in \Cref{fig:c47_facets}.

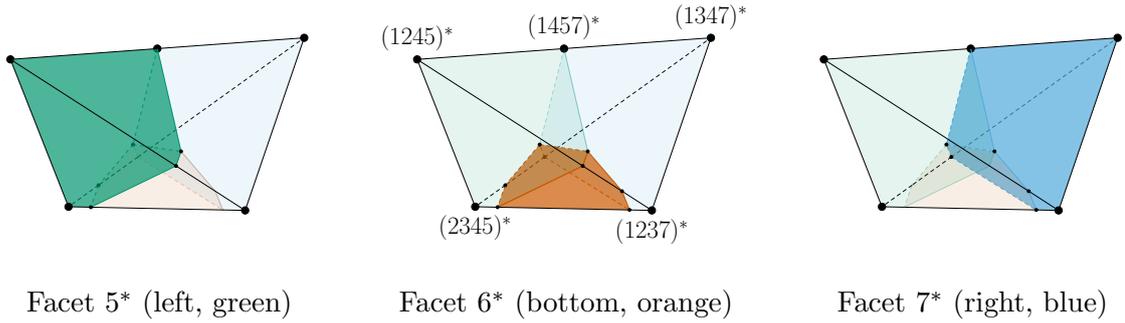
\begin{figure}[!htbp]
\begin{tabular}{ccc}
\resizebox{5cm}{!}{\begin{tikzpicture}%
	[x={(-0.134723cm, -0.671867cm)},
	y={(0.926403cm, -0.346219cm)},
	z={(0.351607cm, 0.654772cm)},
	scale=4.000000,
	back/.style={dashed, thin},
	edges/.style={color=black, thick, line cap=round},
	edge1/.style={color=red!95!black, thick, line cap=round, opacity=0.300000},
	facet1/.style={fill=red!95!black,fill opacity=0.100000},
	edge2/.style={color=skyblue!95!black, thick, line cap=round, opacity=0.300000},
	facet2/.style={fill=skyblue!95!black,fill opacity=0.100000},
	edge3/.style={color=bluishgreen!95!black, thick},
	facet3/.style={fill=bluishgreen!95!black,fill opacity=0.700000},
	vertex/.style={inner sep=0.075cm,circle,draw=black,fill=black,thick}]
%
%
\coordinate (-0.18005, -0.38308, 0.23385) at (-0.18005, -0.38308, 0.23385);
\coordinate (-0.43288, -0.25022, 0.49956) at (-0.43288, -0.25022, 0.49956);
\coordinate (-0.45500, -0.16954, 0.38070) at (-0.45500, -0.16954, 0.38070);
\coordinate (-0.66358, -0.38357, 1.26578) at (-0.66358, -0.38357, 1.26578);
\coordinate (-1.61456, 0.73100, 1.00000) at (-1.61456, 0.73100, 1.00000);
\coordinate (0.00000, 0.00000, 1.00000) at (0.00000, 0.00000, 1.00000);
\coordinate (0.02909, -0.54549, 0.12215) at (0.02909, -0.54549, 0.12215);
\coordinate (0.03594, -0.39935, 0.20129) at (0.03594, -0.39935, 0.20129);
\coordinate (0.07645, 0.46432, 0.66900) at (0.07645, 0.46432, 0.66900);
\coordinate (0.08331, 0.61046, 0.74814) at (0.08331, 0.61046, 0.74814);
\coordinate (0.10927, 0.34217, 0.84782) at (0.10927, 0.34217, 0.84782);
\coordinate (0.14342, -0.01056, 0.97887) at (0.14342, -0.01056, 0.97887);
\coordinate (0.28740, -1.49815, 1.53156) at (0.28740, -1.49815, 1.53156);

\draw[edges,back] (-0.45500, -0.16954, 0.38070) -- (-1.61456, 0.73100, 1.00000);
\fill[facet2] (0.08331, 0.61046, 0.74814) -- (-1.61456, 0.73100, 1.00000) -- (-0.66358, -0.38357, 1.26578) -- (0.00000, 0.00000, 1.00000) -- (0.10927, 0.34217, 0.84782) -- cycle {};
\fill[facet2] (0.00000, 0.00000, 1.00000) -- (-0.66358, -0.38357, 1.26578) -- (-0.43288, -0.25022, 0.49956) -- cycle {};
\fill[facet2] (0.07645, 0.46432, 0.66900) -- (-0.45500, -0.16954, 0.38070) -- (-0.43288, -0.25022, 0.49956) -- (0.00000, 0.00000, 1.00000) -- (0.10927, 0.34217, 0.84782) -- cycle {};
\fill[facet2] (0.08331, 0.61046, 0.74814) -- (0.10927, 0.34217, 0.84782) -- (0.07645, 0.46432, 0.66900) -- cycle {};
\node[vertex,label=above:{\color{white}\huge $(1347)^*$}] at (-1.61456, 0.73100, 1.00000) {};
\node[vertex,label=below:{\color{white}\huge $(1237)^*$}] at (0.08331, 0.61046, 0.74814) {};

\draw[edges] (-0.66358, -0.38357, 1.26578) -- (-1.61456, 0.73100, 1.00000);
\draw[edge2] (-0.66358, -0.38357, 1.26578) -- (0.00000, 0.00000, 1.00000);
\draw[edges] (-1.61456, 0.73100, 1.00000) -- (0.08331, 0.61046, 0.74814);
\draw[edge2] (0.00000, 0.00000, 1.00000) -- (0.10927, 0.34217, 0.84782);
\draw[edge2] (0.10927, 0.34217, 0.84782) -- (0.07645, 0.46432, 0.66900);
\draw[edges] (0.10927, 0.34217, 0.84782) -- (0.08331, 0.61046, 0.74814);
\draw[edges] (0.07645, 0.46432, 0.66900) -- (0.08331, 0.61046, 0.74814);
\draw[edge1,back] (-0.45500, -0.16954, 0.38070) -- (-0.43288, -0.25022, 0.49956);
\draw[edges,back] (-0.45500, -0.16954, 0.38070) -- (-0.18005, -0.38308, 0.23385);
\draw[edge1,back] (-0.45500, -0.16954, 0.38070) -- (0.07645, 0.46432, 0.66900);
\fill[facet1] (0.14342, -0.01056, 0.97887) -- (0.00000, 0.00000, 1.00000) -- (-0.43288, -0.25022, 0.49956) -- (-0.18005, -0.38308, 0.23385) -- (0.03594, -0.39935, 0.20129) -- cycle {};
\fill[facet1] (0.10927, 0.34217, 0.84782) -- (0.14342, -0.01056, 0.97887) -- (0.03594, -0.39935, 0.20129) -- (0.07645, 0.46432, 0.66900) -- cycle {};
\fill[facet1] (0.14342, -0.01056, 0.97887) -- (0.00000, 0.00000, 1.00000) -- (0.10927, 0.34217, 0.84782) -- cycle {};
\draw[edge1] (0.00000, 0.00000, 1.00000) -- (0.10927, 0.34217, 0.84782);
\draw[edge1] (0.00000, 0.00000, 1.00000) -- (0.14342, -0.01056, 0.97887);
\draw[edges] (0.03594, -0.39935, 0.20129) -- (0.07645, 0.46432, 0.66900);
\draw[edge1] (0.10927, 0.34217, 0.84782) -- (0.07645, 0.46432, 0.66900);
\draw[edges] (0.10927, 0.34217, 0.84782) -- (0.14342, -0.01056, 0.97887);
\draw[edge3,back] (-0.66358, -0.38357, 1.26578) -- (-0.43288, -0.25022, 0.49956);
\draw[edge3,back] (-0.43288, -0.25022, 0.49956) -- (-0.18005, -0.38308, 0.23385);
\draw[edge3,back] (-0.43288, -0.25022, 0.49956) -- (0.00000, 0.00000, 1.00000);
\draw[edge3,back] (-0.18005, -0.38308, 0.23385) -- (0.03594, -0.39935, 0.20129);
\draw[edges,back] (-0.18005, -0.38308, 0.23385) -- (0.02909, -0.54549, 0.12215);
\node[vertex,inner sep=0.03cm] at (-0.18005, -0.38308, 0.23385) {};
\node[vertex,inner sep=0.03cm] at (-0.43288, -0.25022, 0.49956) {};
\fill[facet3] (0.28740, -1.49815, 1.53156) -- (0.02909, -0.54549, 0.12215) -- (0.03594, -0.39935, 0.20129) -- (0.14342, -0.01056, 0.97887) -- cycle {};
\node[vertex,label=above:{\color{white}\huge $(1457)^*$}] at (-0.66358, -0.38357, 1.26578) {};
\node[vertex,label=below:{\color{white}\huge $(2345)^*$}] at (0.02909, -0.54549, 0.12215) {};
\node[vertex,label=above:{\color{white}\huge $(1245)^*$}] at (0.28740, -1.49815, 1.53156) {};
\fill[facet3] (0.28740, -1.49815, 1.53156) -- (-0.66358, -0.38357, 1.26578) -- (0.00000, 0.00000, 1.00000) -- (0.14342, -0.01056, 0.97887) -- cycle {};
\draw[edge3] (-0.66358, -0.38357, 1.26578) -- (0.00000, 0.00000, 1.00000);
\draw[edges] (-0.66358, -0.38357, 1.26578) -- (0.28740, -1.49815, 1.53156);
\draw[edge3] (0.00000, 0.00000, 1.00000) -- (0.14342, -0.01056, 0.97887);
\draw[edges] (0.03594, -0.39935, 0.20129) -- (0.02909, -0.54549, 0.12215);
\draw[edge3] (0.03594, -0.39935, 0.20129) -- (0.14342, -0.01056, 0.97887);
\draw[edges] (0.02909, -0.54549, 0.12215) -- (0.28740, -1.49815, 1.53156);
\draw[edges] (0.14342, -0.01056, 0.97887) -- (0.28740, -1.49815, 1.53156);
\node[vertex,inner sep=0.03cm] at (0.03594, -0.39935, 0.20129) {};
\node[vertex,inner sep=0.03cm] at (0.14342, -0.01056, 0.97887) {};
\node[vertex,inner sep=0.03cm] at (0.00000, 0.00000, 1.00000) {};
\end{tikzpicture}} & \resizebox{5cm}{!}{\begin{tikzpicture}%
	[x={(-0.134723cm, -0.671867cm)},
	y={(0.926403cm, -0.346219cm)},
	z={(0.351607cm, 0.654772cm)},
	scale=4.000000,
	back/.style={dashed, thin},
	edges/.style={color=black, thick, line cap=round},
	edge1/.style={color=red!95!black, thick, line cap=round},
	facet1/.style={fill=red!95!black,fill opacity=0.700000},
	edge2/.style={color=skyblue!95!black, thick, line cap=round, opacity=0.300000},
	facet2/.style={fill=skyblue!95!black,fill opacity=0.100000},
	edge3/.style={color=bluishgreen!95!black, thick, line cap=round, opacity=0.300000},
	facet3/.style={fill=bluishgreen!95!black,fill opacity=0.100000},
	vertex/.style={inner sep=0.075cm,circle,draw=black,fill=black,thick}]
%
%
\coordinate (-0.18005, -0.38308, 0.23385) at (-0.18005, -0.38308, 0.23385);
\coordinate (-0.43288, -0.25022, 0.49956) at (-0.43288, -0.25022, 0.49956);
\coordinate (-0.45500, -0.16954, 0.38070) at (-0.45500, -0.16954, 0.38070);
\coordinate (-0.66358, -0.38357, 1.26578) at (-0.66358, -0.38357, 1.26578);
\coordinate (-1.61456, 0.73100, 1.00000) at (-1.61456, 0.73100, 1.00000);
\coordinate (0.00000, 0.00000, 1.00000) at (0.00000, 0.00000, 1.00000);
\coordinate (0.02909, -0.54549, 0.12215) at (0.02909, -0.54549, 0.12215);
\coordinate (0.03594, -0.39935, 0.20129) at (0.03594, -0.39935, 0.20129);
\coordinate (0.07645, 0.46432, 0.66900) at (0.07645, 0.46432, 0.66900);
\coordinate (0.08331, 0.61046, 0.74814) at (0.08331, 0.61046, 0.74814);
\coordinate (0.10927, 0.34217, 0.84782) at (0.10927, 0.34217, 0.84782);
\coordinate (0.14342, -0.01056, 0.97887) at (0.14342, -0.01056, 0.97887);
\coordinate (0.28740, -1.49815, 1.53156) at (0.28740, -1.49815, 1.53156);
\draw[edges,back] (-0.45500, -0.16954, 0.38070) -- (-1.61456, 0.73100, 1.00000);
\fill[facet2] (0.08331, 0.61046, 0.74814) -- (-1.61456, 0.73100, 1.00000) -- (-0.66358, -0.38357, 1.26578) -- (0.00000, 0.00000, 1.00000) -- (0.10927, 0.34217, 0.84782) -- cycle {};
\fill[facet2] (0.00000, 0.00000, 1.00000) -- (-0.66358, -0.38357, 1.26578) -- (-0.43288, -0.25022, 0.49956) -- cycle {};
\fill[facet2] (0.07645, 0.46432, 0.66900) -- (-0.45500, -0.16954, 0.38070) -- (-0.43288, -0.25022, 0.49956) -- (0.00000, 0.00000, 1.00000) -- (0.10927, 0.34217, 0.84782) -- cycle {};
\fill[facet2] (0.08331, 0.61046, 0.74814) -- (0.10927, 0.34217, 0.84782) -- (0.07645, 0.46432, 0.66900) -- cycle {};
\node[vertex,label=above:{\huge $(1347)^*$}] at (-1.61456, 0.73100, 1.00000) {};
\node[vertex,label=below:{\huge $(1237)^*$}] at (0.08331, 0.61046, 0.74814) {};
\draw[edges] (-0.66358, -0.38357, 1.26578) -- (-1.61456, 0.73100, 1.00000);
\draw[edge2] (-0.66358, -0.38357, 1.26578) -- (0.00000, 0.00000, 1.00000);
\draw[edges] (-1.61456, 0.73100, 1.00000) -- (0.08331, 0.61046, 0.74814);
\draw[edge2] (0.00000, 0.00000, 1.00000) -- (0.10927, 0.34217, 0.84782);
\draw[edge2] (0.10927, 0.34217, 0.84782) -- (0.07645, 0.46432, 0.66900);
\draw[edges] (0.10927, 0.34217, 0.84782) -- (0.08331, 0.61046, 0.74814);
\draw[edges] (0.07645, 0.46432, 0.66900) -- (0.08331, 0.61046, 0.74814);
\draw[edge1,back] (-0.45500, -0.16954, 0.38070) -- (-0.43288, -0.25022, 0.49956);
\draw[edges,back] (-0.45500, -0.16954, 0.38070) -- (-0.18005, -0.38308, 0.23385);
\draw[edge1,back] (-0.45500, -0.16954, 0.38070) -- (0.07645, 0.46432, 0.66900);
\node[vertex,inner sep=0.03cm] at (-0.45500, -0.16954, 0.38070) {};
\fill[facet1] (0.14342, -0.01056, 0.97887) -- (0.00000, 0.00000, 1.00000) -- (-0.43288, -0.25022, 0.49956) -- (-0.18005, -0.38308, 0.23385) -- (0.03594, -0.39935, 0.20129) -- cycle {};
\fill[facet1] (0.10927, 0.34217, 0.84782) -- (0.14342, -0.01056, 0.97887) -- (0.03594, -0.39935, 0.20129) -- (0.07645, 0.46432, 0.66900) -- cycle {};
\fill[facet1] (0.14342, -0.01056, 0.97887) -- (0.00000, 0.00000, 1.00000) -- (0.10927, 0.34217, 0.84782) -- cycle {};
\draw[edge1] (0.00000, 0.00000, 1.00000) -- (0.10927, 0.34217, 0.84782);
\draw[edge1] (0.00000, 0.00000, 1.00000) -- (0.14342, -0.01056, 0.97887);
\draw[edges] (0.03594, -0.39935, 0.20129) -- (0.07645, 0.46432, 0.66900);
\draw[edge1] (0.03594, -0.39935, 0.20129) -- (0.14342, -0.01056, 0.97887);
\draw[edge1] (0.10927, 0.34217, 0.84782) -- (0.07645, 0.46432, 0.66900);
\draw[edges] (0.10927, 0.34217, 0.84782) -- (0.14342, -0.01056, 0.97887);
\draw[edge3,back] (-0.66358, -0.38357, 1.26578) -- (-0.43288, -0.25022, 0.49956);
\draw[edge1,back] (-0.43288, -0.25022, 0.49956) -- (-0.18005, -0.38308, 0.23385);
\draw[edge1,back] (-0.43288, -0.25022, 0.49956) -- (0.00000, 0.00000, 1.00000);
\draw[edge1,back] (-0.18005, -0.38308, 0.23385) -- (0.03594, -0.39935, 0.20129);
\draw[edges,back] (-0.18005, -0.38308, 0.23385) -- (0.02909, -0.54549, 0.12215);
\fill[facet3] (0.28740, -1.49815, 1.53156) -- (0.02909, -0.54549, 0.12215) -- (0.03594, -0.39935, 0.20129) -- (0.14342, -0.01056, 0.97887) -- cycle {};
\node[vertex,label=above:{\huge $(1457)^*$}] at (-0.66358, -0.38357, 1.26578) {};
\node[vertex,label=below:{\huge $(2345)^*$}] at (0.02909, -0.54549, 0.12215) {};
\node[vertex,label=above:{\huge $(1245)^*$}] at (0.28740, -1.49815, 1.53156) {};
\fill[facet3] (0.28740, -1.49815, 1.53156) -- (-0.66358, -0.38357, 1.26578) -- (0.00000, 0.00000, 1.00000) -- (0.14342, -0.01056, 0.97887) -- cycle {};
\draw[edge3] (-0.66358, -0.38357, 1.26578) -- (0.00000, 0.00000, 1.00000);
\draw[edges] (-0.66358, -0.38357, 1.26578) -- (0.28740, -1.49815, 1.53156);
\draw[edge1] (0.00000, 0.00000, 1.00000) -- (0.14342, -0.01056, 0.97887);
\draw[edges] (0.03594, -0.39935, 0.20129) -- (0.02909, -0.54549, 0.12215);
\draw[edge1] (0.03594, -0.39935, 0.20129) -- (0.14342, -0.01056, 0.97887);
\draw[edges] (0.02909, -0.54549, 0.12215) -- (0.28740, -1.49815, 1.53156);
\draw[edges] (0.14342, -0.01056, 0.97887) -- (0.28740, -1.49815, 1.53156);
\node[vertex,inner sep=0.03cm] at (-0.18005, -0.38308, 0.23385) {};
\node[vertex,inner sep=0.03cm] at (-0.43288, -0.25022, 0.49956) {};
\node[vertex,inner sep=0.03cm] at (0.03594, -0.39935, 0.20129) {};
\node[vertex,inner sep=0.03cm] at (0.14342, -0.01056, 0.97887) {};
\node[vertex,inner sep=0.03cm] at (0.00000, 0.00000, 1.00000) {};
\node[vertex,inner sep=0.03cm] at (0.10927, 0.34217, 0.84782) {};
\node[vertex,inner sep=0.03cm] at (0.07645, 0.46432, 0.66900) {};

\end{tikzpicture}} & \resizebox{5cm}{!}{\begin{tikzpicture}%
	[x={(-0.134723cm, -0.671867cm)},
	y={(0.926403cm, -0.346219cm)},
	z={(0.351607cm, 0.654772cm)},
	scale=4.000000,
	back/.style={dashed, thin},
	edges/.style={color=black, thick, line cap=round},
	edge1/.style={color=red!95!black, thick, line cap=round, opacity=0.300000},
	facet1/.style={fill=red!95!black,fill opacity=0.100000},
	edge2/.style={color=skyblue!95!black, thick, line cap=round},
	facet2/.style={fill=skyblue!95!black,fill opacity=0.700000},
	edge3/.style={color=bluishgreen!95!black, thick, line cap=round, opacity=0.300000},
	facet3/.style={fill=bluishgreen!95!black,fill opacity=0.100000},
	vertex/.style={inner sep=0.075cm,circle,draw=black,fill=black,thick}]
%
%
\coordinate (-0.18005, -0.38308, 0.23385) at (-0.18005, -0.38308, 0.23385);
\coordinate (-0.43288, -0.25022, 0.49956) at (-0.43288, -0.25022, 0.49956);
\coordinate (-0.45500, -0.16954, 0.38070) at (-0.45500, -0.16954, 0.38070);
\coordinate (-0.66358, -0.38357, 1.26578) at (-0.66358, -0.38357, 1.26578);
\coordinate (-1.61456, 0.73100, 1.00000) at (-1.61456, 0.73100, 1.00000);
\coordinate (0.00000, 0.00000, 1.00000) at (0.00000, 0.00000, 1.00000);
\coordinate (0.02909, -0.54549, 0.12215) at (0.02909, -0.54549, 0.12215);
\coordinate (0.03594, -0.39935, 0.20129) at (0.03594, -0.39935, 0.20129);
\coordinate (0.07645, 0.46432, 0.66900) at (0.07645, 0.46432, 0.66900);
\coordinate (0.08331, 0.61046, 0.74814) at (0.08331, 0.61046, 0.74814);
\coordinate (0.10927, 0.34217, 0.84782) at (0.10927, 0.34217, 0.84782);
\coordinate (0.14342, -0.01056, 0.97887) at (0.14342, -0.01056, 0.97887);
\coordinate (0.28740, -1.49815, 1.53156) at (0.28740, -1.49815, 1.53156);
\draw[edges,back] (-0.45500, -0.16954, 0.38070) -- (-1.61456, 0.73100, 1.00000);
\fill[facet2] (0.08331, 0.61046, 0.74814) -- (-1.61456, 0.73100, 1.00000) -- (-0.66358, -0.38357, 1.26578) -- (0.00000, 0.00000, 1.00000) -- (0.10927, 0.34217, 0.84782) -- cycle {};
\fill[facet2] (0.00000, 0.00000, 1.00000) -- (-0.66358, -0.38357, 1.26578) -- (-0.43288, -0.25022, 0.49956) -- cycle {};
\fill[facet2] (0.07645, 0.46432, 0.66900) -- (-0.45500, -0.16954, 0.38070) -- (-0.43288, -0.25022, 0.49956) -- (0.00000, 0.00000, 1.00000) -- (0.10927, 0.34217, 0.84782) -- cycle {};
\fill[facet2] (0.08331, 0.61046, 0.74814) -- (0.10927, 0.34217, 0.84782) -- (0.07645, 0.46432, 0.66900) -- cycle {};
\node[vertex,label=above:{\color{white}\huge $(1347)^*$}] at (-1.61456, 0.73100, 1.00000) {};
\node[vertex,label=below:{\color{white}\huge $(1237)^*$}] at (0.08331, 0.61046, 0.74814) {};
\draw[edges] (-0.66358, -0.38357, 1.26578) -- (-1.61456, 0.73100, 1.00000);
\draw[edges] (-1.61456, 0.73100, 1.00000) -- (0.08331, 0.61046, 0.74814);
\draw[edges] (0.10927, 0.34217, 0.84782) -- (0.08331, 0.61046, 0.74814);
\draw[edges] (0.07645, 0.46432, 0.66900) -- (0.08331, 0.61046, 0.74814);
\draw[edge2,back] (-0.45500, -0.16954, 0.38070) -- (-0.43288, -0.25022, 0.49956);
\draw[edges,back] (-0.45500, -0.16954, 0.38070) -- (-0.18005, -0.38308, 0.23385);
\draw[edge2,back] (-0.45500, -0.16954, 0.38070) -- (0.07645, 0.46432, 0.66900);
\fill[facet1] (0.14342, -0.01056, 0.97887) -- (0.00000, 0.00000, 1.00000) -- (-0.43288, -0.25022, 0.49956) -- (-0.18005, -0.38308, 0.23385) -- (0.03594, -0.39935, 0.20129) -- cycle {};
\fill[facet1] (0.10927, 0.34217, 0.84782) -- (0.14342, -0.01056, 0.97887) -- (0.03594, -0.39935, 0.20129) -- (0.07645, 0.46432, 0.66900) -- cycle {};
\fill[facet1] (0.14342, -0.01056, 0.97887) -- (0.00000, 0.00000, 1.00000) -- (0.10927, 0.34217, 0.84782) -- cycle {};
\draw[edge2] (0.00000, 0.00000, 1.00000) -- (0.10927, 0.34217, 0.84782);
\draw[edges] (0.03594, -0.39935, 0.20129) -- (0.07645, 0.46432, 0.66900);
\draw[edge2] (0.10927, 0.34217, 0.84782) -- (0.07645, 0.46432, 0.66900);
\draw[edges] (0.10927, 0.34217, 0.84782) -- (0.14342, -0.01056, 0.97887);
\draw[edge2,back] (-0.66358, -0.38357, 1.26578) -- (-0.43288, -0.25022, 0.49956);
\draw[edge3,back] (-0.43288, -0.25022, 0.49956) -- (-0.18005, -0.38308, 0.23385);
\draw[edge2,back] (-0.43288, -0.25022, 0.49956) -- (0.00000, 0.00000, 1.00000);
\draw[edge3,back] (-0.18005, -0.38308, 0.23385) -- (0.03594, -0.39935, 0.20129);
\draw[edges,back] (-0.18005, -0.38308, 0.23385) -- (0.02909, -0.54549, 0.12215);
\fill[facet3] (0.28740, -1.49815, 1.53156) -- (0.02909, -0.54549, 0.12215) -- (0.03594, -0.39935, 0.20129) -- (0.14342, -0.01056, 0.97887) -- cycle {};
\node[vertex,label=above:{\color{white}\huge $(1457)^*$}] at (-0.66358, -0.38357, 1.26578) {};
\node[vertex,label=below:{\color{white}\huge $(2345)^*$}] at (0.02909, -0.54549, 0.12215) {};
\node[vertex,label=above:{\color{white}\huge $(1245)^*$}] at (0.28740, -1.49815, 1.53156) {};
\fill[facet3] (0.28740, -1.49815, 1.53156) -- (-0.66358, -0.38357, 1.26578) -- (0.00000, 0.00000, 1.00000) -- (0.14342, -0.01056, 0.97887) -- cycle {};
\draw[edge2] (-0.66358, -0.38357, 1.26578) -- (0.00000, 0.00000, 1.00000);
\draw[edges] (-0.66358, -0.38357, 1.26578) -- (0.28740, -1.49815, 1.53156);
\draw[edge3] (0.00000, 0.00000, 1.00000) -- (0.14342, -0.01056, 0.97887);
\draw[edges] (0.03594, -0.39935, 0.20129) -- (0.02909, -0.54549, 0.12215);
\draw[edge3] (0.03594, -0.39935, 0.20129) -- (0.14342, -0.01056, 0.97887);
\draw[edges] (0.02909, -0.54549, 0.12215) -- (0.28740, -1.49815, 1.53156);
\draw[edges] (0.14342, -0.01056, 0.97887) -- (0.28740, -1.49815, 1.53156);
\node[vertex,inner sep=0.03cm] at (-0.43288, -0.25022, 0.49956) {};
\node[vertex,inner sep=0.03cm] at (-0.45500, -0.16954, 0.38070) {};
\node[vertex,inner sep=0.03cm] at (0.00000, 0.00000, 1.00000) {};
\node[vertex,inner sep=0.03cm] at (0.10927, 0.34217, 0.84782) {};
\node[vertex,inner sep=0.03cm] at (0.07645, 0.46432, 0.66900) {};
\end{tikzpicture}} \\[1em]
Facet $5^*$ (left, green) & Facet $6^*$ (bottom, orange) & Facet $7^*$ (right, blue)
\end{tabular}
\caption{The three facets of $C_4(7)^*$ that do not contain the vertex $(1234)^*$.}
\label{fig:c47_facets}
\end{figure}

We focus on facet $6^*$, emphasized in the middle of \Cref{fig:c47_facets}.
We assume \(C_4(7)^*\) to be inscribed, and use the parametrization of \Cref{ssec:wedges} to realize facet $6^*$ in $\R^3$ using $7$ variables.
Observe that the location of the four vertices of the tetrahedron are determined by facet equations of the realization of facet $6^*$. 
This way, twelve of the thirteen vertices contained in the tetrahedron are determined.
The remaining vertex $(1457)^*$, located on the top edge of the tetrahedron, still has one degree of freedom.
Lemma~\ref{lem:classic_iii} together with Lemma~\ref{lem:projection_iv} imply that every pentagonal face is inscribed.
The vertex $(2345)^*$ and the pentagon $(56)^*$ determine a unique $2$-sphere containing those six points. 
Vertices $(1245)^*$ and $(1457)^*$ must be on this $2$-sphere giving two equations of degree $(2,2,2,2,1,2,2)$ and $(2,2,3,2,1,0,0)$.
Similarly, the vertex $(1237)^*$ and the pentagon $(67)^*$ determine a unique $2$-sphere containing vertices $(1457)^*$ and $(1347)^*$ leading to equations of degree $(13,13,26,8,4,20,20)$ and $(6,6,11,3,2,10,10)$.
This leads to an underdetermined system of $4$ equations in $7$ variables.

To reduce the complexity, we impose symmetry, resulting in a system with fewer variables.
Indeed, this reduces the parameter space to just 4 variables: $\alpha$ and the first two lengths $\beta_1,\beta_2$.
To eliminate the angle $\gamma$, we require that the projection of the great circle passing through $(0,0)$ and $\alpha$ be the angle bisector of the two rays (see \Cref{fig:proj} on the right for an illustration of the parameter space $(\alpha,\beta,\gamma)$ -- for a different choice of $\gamma$).
Since one of the rays is horizontal, knowledge of $\alpha$ prescribes the angle $\gamma$.
A further constraint comes from the fact that facets~$5^*$ and $7^*$ must be isometric and hence vertex $(1457)^*$ must be in the middle of the edge $\pi_{(1234)^*}((14)^*)$.

Taking the educated guess $\alpha=(-3/2,-1/2)$, we compute the intersection of the two constraints, to obtain two algebraic curves on the plane with degrees $(5,3)$ and $(9,7)$. 
Newton's method and subsequent verification then results in exact coordinates for the stereographic projection of an inscribed embedding of \(C_4(7)^*\) from vertex $(1234)^*$.
The coordinates are given in \Cref{app:c47}.
The realization has coordinates in \(\mathbb{Q}[a]\), where \(a\) is the solution to a degree 10 polynomial.
Therefore, the corresponding inscribed realization of $C_4(7)^*$ in $\R^4$ must have degree at least~$20$.

\begin{question}
Is there a rational inscribed realization of $C_4(7)^*$? If not, what is the smallest possible degree of the coordinates as algebraic numbers over $\Q$?
\end{question}

In particular, a degree 2 realization is of exceptional interest.

\subsection{Non-circumscribability of $C_4(8)$}
\label{ssec:48obstruction}

We start by giving a classical result related to inscribability. 
It is due to Jakob Steiner, originally proved by Auguste Miquel, see \Cref{fig:miquel} for an illustration.

\begin{lemma}[{Miquel's theorem \cite[Theorem 1.6 and Theorem 18.5]{richtergebert_perspectives_2011}}]
Let $p_i$, $1 \leq i \leq 8$, be eight distinct points in $\R^2$ such that the following quadruples are cocircular: \((p_1,p_2,p_3,p_4),\) \((p_1,p_2,p_5,p_6),\) \((p_2,p_3,p_6,p_7),\) \((p_3,p_4,p_7,p_8),\) \((p_1,p_4,p_5,p_8)\).
Then \((p_5,p_6,p_7,p_8)\) is cocircular.
\end{lemma}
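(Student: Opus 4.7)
The plan is to reduce Miquel's eight-point statement to his classical triangle theorem via a well-chosen M\"obius transformation. Recall that cocircularity of four points in $\R^2$ is preserved by M\"obius transformations of the inversive plane, where lines are interpreted as generalized circles through the point at infinity. Applying a M\"obius map sending $p_1$ to $\infty$, the three cocircular quadruples that contain $p_1$, namely $(p_1,p_2,p_3,p_4)$, $(p_1,p_2,p_5,p_6)$, and $(p_1,p_4,p_5,p_8)$, each become a collinear triple. The remaining two quadruples $(p_2,p_3,p_6,p_7)$ and $(p_3,p_4,p_7,p_8)$ stay (generalized) cocircular, and the conclusion is equivalent to cocircularity of the image of $(p_5,p_6,p_7,p_8)$.

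After this reduction, the three lines through $\{p_2,p_3,p_4\}$, $\{p_2,p_5,p_6\}$, and $\{p_4,p_5,p_8\}$ pairwise intersect at $p_2$, $p_4$, and $p_5$, forming a triangle $T$ with vertices $p_2,p_4,p_5$. The auxiliary points lie on its sides: $p_3$ on $p_2p_4$, $p_6$ on $p_2p_5$, and $p_8$ on $p_4p_5$. The circles $C_1$ through $\{p_2,p_3,p_6,p_7\}$ and $C_2$ through $\{p_3,p_4,p_7,p_8\}$ are precisely two of the three \emph{Miquel circles} of $T$ relative to the side points $p_3,p_6,p_8$: the one through vertex $p_2$ and the two adjacent side points, and the one through vertex $p_4$ and the two adjacent side points.

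The central step is to invoke Miquel's classical triangle theorem, which asserts that the three Miquel circles of a triangle with one point marked on each side share a common point $M$. In our setting, $C_1\cap C_2$ contains the two distinct points $p_3$ and $p_7$, so $\{p_3,p_7\}=C_1\cap C_2$ since two distinct circles meet in at most two points. Comparing with the classical theorem, $p_7$ must coincide with the Miquel point $M$. Consequently $p_7$ lies on the third Miquel circle, the one through $p_5,p_6,p_8$, which yields cocircularity of $(p_5,p_6,p_7,p_8)$. Undoing the M\"obius transformation gives the statement in the original plane.

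The main obstacle is essentially cosmetic: one has to verify that the three lines obtained after sending $p_1$ to infinity actually form a non-degenerate triangle and that $C_1$ and $C_2$ meet transversally at $p_3$ (rather than being tangent there, which would prevent identifying $p_7$ with $M$). Both points are settled by the distinctness hypothesis on the eight $p_i$, together with a continuity argument from a generic nearby configuration to cover any boundary cases.
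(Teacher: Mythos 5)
Your argument is correct, and it fills a gap the paper leaves open: the paper does not prove this lemma but simply cites Richter-Gebert \cite{richtergebert_perspectives_2011}, whose treatment goes through bracket algebra and a Cayley--Bacharach-type argument. Your route---a M\"obius transformation sending $p_1$ to $\infty$ to collapse the three circles through $p_1$ into lines, yielding a triangle $p_2p_4p_5$ with cevian feet $p_3,p_6,p_8$ on its sides, and then invoking the classical triangle form of Miquel's theorem---is the standard elementary proof and is self-contained, which arguably makes the paper easier to read than a bare citation. The correspondence you set up between the two remaining cocircular quadruples and the Miquel circles at vertices $p_2$ and $p_4$ is exactly right, and identifying $p_7$ with the Miquel point then lands it on the third circle through $p_5,p_6,p_8$.

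Two small refinements to your closing paragraph. First, the tangency worry is settled without continuity: if $C_1$ and $C_2$ were tangent at $p_3$ they would meet \emph{only} at $p_3$, forcing $p_7=p_3$ and contradicting the distinctness of the eight points; so transversality is automatic. Second, the degenerate collinearity cases (e.g.\ $p_2,p_4,p_5$ collinear, or $\ell_1=\ell_2$, or $C_1=C_2$) do not need a perturbation argument either: each one propagates, via the remaining cocircularity hypotheses and the fact that a line meets a proper circle in at most two points, to force all eight original points onto a single generalized circle, in which case the conclusion is immediate. Stating this directly would be cleaner than appealing to continuity, since one would otherwise have to verify that every degenerate configuration satisfying the hypotheses is a limit of non-degenerate ones.
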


\begin{figure}[!htbp]
\begin{tikzpicture}[%
	vertex/.style={ivnner sep=0pt,circle,draw=black!25!black,fill=black!75!black,thick}]

\def\innerdist{0.25}
\def\outerdist{1.1}

\coordinate (0) at (0,0);

\coordinate (a) at (\innerdist,\innerdist);
\coordinate (b) at (\innerdist,-\innerdist);
\coordinate (c) at (-\innerdist,-\innerdist);
\coordinate (d) at (-\innerdist,\innerdist);

\coordinate (A) at (\outerdist,\outerdist);
\coordinate (B) at (\outerdist,-\outerdist);
\coordinate (C) at (-\outerdist,-\outerdist);
\coordinate (D) at (-\outerdist,\outerdist);

\node (circle1) [name path=circle1,draw,circle through=(a)] at (A) {};
\node (circle2) [name path=circle2,draw,circle through=(b)] at (B) {};
\node (circle3) [name path=circle3,draw,circle through=(c)] at (C) {};
\node (circle4) [name path=circle4,draw,circle through=(d)] at (D) {};

\path [name intersections={of=circle1 and circle2, by={1, 5}}];
\path [name intersections={of=circle2 and circle3, by={2, 6}}];
\path [name intersections={of=circle3 and circle4, by={3, 7}}];
\path [name intersections={of=circle4 and circle1, by={8, 4}}];

\node[inner sep=0pt,label=below right:$p_1$] at (1) {};
\node[inner sep=1.5mm,label=above:$p_2$] at (2) {};
\node[inner sep=0pt,label=below left:$p_3$] at (3) {};
\node[inner sep=1.5mm,label=below:$p_4$] at (4) {};
\node[inner sep=1mm,label=right:$p_5$] at (5) {};
\node[inner sep=1.5mm,label=below:$p_6$] at (6) {};
\node[inner sep=0pt,label=left:$p_7$] at (7) {};
\node[inner sep=1.5mm,label=above:$p_8$] at (8) {};

\node (circle0) [name path=circle0,draw,circle through=(1)] at (0) {};

\foreach \point in {1,2,3,4,5,6,7,8}
  \fill [red] (\point) circle (2pt);

\node (circle5) [name path=circle5,draw,dashed,red, thick,circle through=(5)] at (0) {};

\end{tikzpicture}
\caption{The dashed circle is the sixth circle passing through four points}
\label{fig:miquel}
\end{figure}
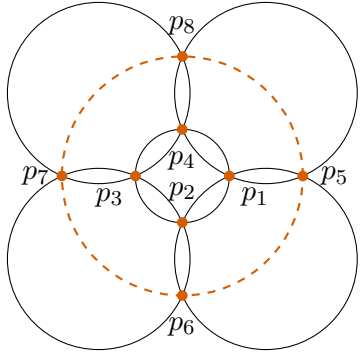

Miquel's theorem lifts to a statement about planarity of points on a 2-sphere.

\begin{lemma}[Miquel's theorem, spherical version]
\label{lem:Miquel}
Let \(p_i\), $1 \leq i \leq 8$, be eight distinct points on $\mathbb{S}^2$ such that the following quadruples of vertices are coplanar: \((p_1,p_2,p_3,p_4),\) \((p_1,p_2,p_5,p_6),\) \((p_2,p_3,p_6,p_7),\) \((p_3,p_4,p_7,p_8),\) \((p_1,p_4,p_5,p_8)\). 
Then the quadruple \((p_5,p_6,p_7,p_8)\) is coplanar and thus the lines spanned by \((p_5,p_6)\) and \((p_7,p_8)\) are coplanar.
\end{lemma}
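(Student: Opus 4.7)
The strategy is to reduce the spherical statement to the planar version via stereographic projection. Recall that four points on $\mathbb{S}^2$ are coplanar if and only if they lie on a common circle on $\mathbb{S}^2$, namely the intersection of their affine hull with the sphere. Thus each of the five hypothesized coplanar quadruples determines a circle $C_j \subset \mathbb{S}^2$ ($j = 1, \ldots, 5$), and the conclusion we seek is equivalent to the cocircularity of $(p_5,p_6,p_7,p_8)$ on $\mathbb{S}^2$.

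First I would choose a point $N \in \mathbb{S}^2$ as the center of a stereographic projection subject to two generic conditions: $N \notin \{p_1,\ldots,p_8\}$, and $N$ does not lie on any of the five circles $C_1,\ldots,C_5$. Since these exclusions amount to removing finitely many points and finitely many $1$-dimensional subsets from $\mathbb{S}^2$, such an $N$ exists. Let $\pi_N : \mathbb{S}^2 \setminus \{N\} \to \mathbb{R}^2$ denote the stereographic projection from $N$ onto an affine plane not passing through $N$, and set $q_i := \pi_N(p_i)$ for $i = 1,\ldots,8$. Because $\pi_N$ is a bijection onto $\mathbb{R}^2$ and injective on $\mathbb{S}^2 \setminus \{N\}$, the points $q_1,\ldots,q_8$ are eight distinct points in $\mathbb{R}^2$.

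Next I would invoke the classical fact that stereographic projection maps any circle on $\mathbb{S}^2$ not passing through $N$ to a circle in $\mathbb{R}^2$. By the choice of $N$, none of the circles $C_j$ pass through $N$, so each $C_j$ projects to a genuine circle in $\mathbb{R}^2$ containing the images of the corresponding four points. This produces exactly the hypothesis of the planar Miquel theorem: the quadruples $(q_1,q_2,q_3,q_4)$, $(q_1,q_2,q_5,q_6)$, $(q_2,q_3,q_6,q_7)$, $(q_3,q_4,q_7,q_8)$, $(q_1,q_4,q_5,q_8)$ are each cocircular in $\mathbb{R}^2$. Applying the planar version yields that $(q_5,q_6,q_7,q_8)$ is cocircular as well.

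Finally, I would pull back. Since $(q_5,q_6,q_7,q_8)$ lies on a common circle in $\mathbb{R}^2$, its preimage under $\pi_N$ lies on a circle on $\mathbb{S}^2$ (the preimage of a circle in $\mathbb{R}^2$ under stereographic projection is a circle on $\mathbb{S}^2$ passing through $N$ only if the planar figure is a line, which is not the case here). Hence $(p_5,p_6,p_7,p_8)$ lies on a common circle on $\mathbb{S}^2$, and is therefore coplanar. In particular, the lines spanned by $(p_5,p_6)$ and $(p_7,p_8)$ lie in this common plane. The main subtlety lies in the generic choice of $N$; everything else is a direct translation of planar inversive incidences to spherical ones via the circle-preserving property of $\pi_N$.
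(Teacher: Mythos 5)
The paper states this lemma without giving a proof, implicitly treating the lift from the planar statement as standard. Your argument via stereographic projection from a generic center $N$ is exactly the canonical way to do this, and it is correct: coplanar quadruples on $\mathbb{S}^2$ are cocircular (since a plane meets the sphere in a circle), stereographic projection from a point off all five circles carries these to cocircular quadruples in $\mathbb{R}^2$, the planar Miquel theorem applies, and the preimage of the resulting circle is a circle on $\mathbb{S}^2$, hence contained in a plane. The genericity argument for $N$ (avoid eight points and five circles, each of measure zero in $\mathbb{S}^2$) is the right way to handle the choice.

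One small imprecision worth tightening: in your last paragraph you write that the preimage of a planar circle ``is a circle on $\mathbb{S}^2$ passing through $N$ only if the planar figure is a line.'' The clean statement is that preimages of genuine circles in $\mathbb{R}^2$ are circles on $\mathbb{S}^2$ missing $N$, while preimages of lines (together with $\infty$) are circles through $N$. But the distinction is immaterial for your conclusion: even if the planar Miquel theorem returned four collinear points, their preimages would still lie on a circle of $\mathbb{S}^2$ (one through $N$), hence would still be coplanar. You might state this once so the argument is robust to the degenerate case, rather than assuming the planar output is a nondegenerate circle.
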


Miquel's theorem describes the underlying reason for the fact that one cannot force a facet of $C_4(8)^*$ to be inscribed.

\begin{theorem}
\label{thm:c48}
No realization of $C_4(8)^*$ has an inscribed facet, although all its facets are inscribable.
\end{theorem}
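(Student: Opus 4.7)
The plan is to assume, for contradiction, that some facet of $C_4(8)^*$ is inscribed and then derive a contradiction from the spherical Miquel lemma combined with the simple-$4$-polytope geometry of $C_4(8)^*$. By the cyclic symmetry of $C_4(8)$ it suffices to treat the facet $1^*$, which is combinatorially the wedge $F_8$ over a hexagon: its ten vertices are the facets of $C_4(8)$ containing the vertex $1$; its two hexagonal $2$-faces are dual to the edges $\{1,2\}$ and $\{1,8\}$; its three quadrilateral $2$-faces are dual to $\{1,4\},\{1,5\},\{1,6\}$; and its two triangular $2$-faces are dual to $\{1,3\},\{1,7\}$. Inscribability of $F_8$ as a standalone $3$-polytope follows from \Cref{prop:moebius}, giving the ``although'' clause.

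To build the Miquel configuration inside $1^*$, I would discard the two wedge-edge vertices $1238$ and $1278$ and relabel the remaining eight vertices as $p_1=1245,\,p_2=1458,\,p_3=1568,\,p_4=1256,\,p_5=1234,\,p_6=1348,\,p_7=1678,\,p_8=1267$. Then each ``vertical'' pair $p_ip_{i+4}$ shares three labels, hence forms an edge of $C_4(8)^*$; and the five coplanar quadruples listed in \Cref{lem:Miquel} match, respectively, the $\{1,5\}$-quadrilateral, the $\{1,4\}$-quadrilateral, a $4$-subset of the $\{1,8\}$-hexagon, the $\{1,6\}$-quadrilateral, and a $4$-subset of the $\{1,2\}$-hexagon of $1^*$. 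If $1^*$ is inscribed, all five quadruples are cocircular on the inscribing $2$-sphere, so \Cref{lem:Miquel} forces the sixth quadruple $\{p_5,p_6,p_7,p_8\}$ to be coplanar; equivalently, the lines spanned by the two edges $p_5p_6$ and $p_7p_8$ lie in a common $2$-plane inside the affine hull $H_{1^*}$.

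Next, I would introduce the external vertex $v=3467$. A direct combinatorial check shows that $v$ lies in the hexagonal $2$-face $\alpha$ of $C_4(8)^*$ dual to the edge $\{3,4\}$, which also contains the endpoints of $p_5p_6$, and symmetrically $v$ lies in the hexagonal $2$-face $\beta$ dual to $\{6,7\}$, which contains the endpoints of $p_7p_8$. Hence $\aff(\alpha)=H_{3^*}\cap H_{4^*}$ and $\aff(\beta)=H_{6^*}\cap H_{7^*}$, and because $C_4(8)^*$ is simple, the four facets $3^*,4^*,6^*,7^*$ meet exactly at $v$; the four normals $N_{3^*},N_{4^*},N_{6^*},N_{7^*}$ are therefore linearly independent and $\aff(\alpha)\cap\aff(\beta)=\{v\}$ in every realization of $C_4(8)^*$.

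The closing step combines these inputs: the lines $p_5p_6\subset\aff(\alpha)$ and $p_7p_8\subset\aff(\beta)$ are coplanar, so they either meet or are parallel. If they meet, the intersection point lies in $\aff(\alpha)\cap\aff(\beta)=\{v\}$, but $v\notin H_{1^*}$ while both lines are contained in $H_{1^*}$, a contradiction. If they are parallel, their common direction vector lies in the direction spaces of both $\aff(\alpha)$ and $\aff(\beta)$, hence is orthogonal to each of $N_{3^*},N_{4^*},N_{6^*},N_{7^*}$; but these four normals span $\R^4$, forcing the direction vector to vanish, which is again absurd. Therefore $1^*$ cannot be inscribed, and cyclic symmetry handles all other facets. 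I expect the delicate part to be pinning down the Miquel labeling so that each of the five quadruples is manifestly coplanar in any inscribed realization, together with identifying the external vertex $v$ whose two incident hexagonal $2$-faces capture precisely the two edges $p_5p_6$ and $p_7p_8$ produced by Miquel's conclusion; once those are in place, the linear-algebra closure is short.
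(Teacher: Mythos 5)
Your proof is correct, and its skeleton is the same as the paper's: apply the spherical Miquel lemma to the wedge $1^*$ (your $p$-labeling with $p_5p_6=(134)^*$ and $p_7p_8=(167)^*$, together with the identification of the five hypothesis quadruples with the quadrilateral faces $(14)^*,(15)^*,(16)^*$ and $4$-subsets of the hexagons $(12)^*,(18)^*$, all check out), and then collide the resulting coplanarity of $(134)^*$ and $(167)^*$ with the fact that these two edges must be skew because of the simple vertex $(3467)^*$. Where you diverge is in how you establish skewness. The paper projects stereographically from $(3467)^*$ and observes that the images of $(134)^*$ and $(167)^*$ are opposite edges of the projected tetrahedron $\Delta$, hence skew, then appeals to \Cref{lem:projection} to transfer the inscribed-Miquel contradiction into this projected picture. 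You instead work entirely in $\R^4$: you note $\aff((34)^*)=H_{3^*}\cap H_{4^*}$, $\aff((67)^*)=H_{6^*}\cap H_{7^*}$, that simplicity of $(3467)^*$ forces $N_{3^*},N_{4^*},N_{6^*},N_{7^*}$ to be linearly independent so that $\aff((34)^*)\cap\aff((67)^*)=\{(3467)^*\}$, and then rule out both the ``meet'' case ($(3467)^*\notin H_{1^*}$) and the ``parallel'' case (a nonzero direction can't be orthogonal to a basis of $\R^4$). This buys you a self-contained, purely linear-algebraic skewness argument that avoids the stereographic projection machinery of \Cref{lem:projection} and the appeal to the figure; the paper's version buys you the visual subdivision picture that it reuses for the extension to $C_4(k)^*$ and to the neighborly cases. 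Both are valid, and yours is arguably the more explicit rendering of the obstruction.
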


\begin{proof}
The facets of $C_4(8)^*$ are all combinatorially equivalent to $F_8$, a wedge over a hexagon.
By \Cref{prop:wedge_real}, they are inscribable.
By Gale's evenness condition, the facets of $C_4(8)$ are given by
\begin{align*}
   & 1234, 1238, 1245, 1256, 1267, 1278, 1348, 1458, 1568, 1678, \\
   & 2345, 2356, 2367, 2378, 3456, 3467, 3478, 4567, 4578, 5678.
\end{align*}
By duality, these correspond to the vertices of $C_4(8)^*$, and we write $(ijkl)^*$ for the vertex of~$C_4(8)^*$ corresponding to facet $ijkl$ of $C_4(8)$.
By \Cref{lem:projection}, projecting $C_4(8)^*$ stereographically from vertex $(3467)^*$ yields a polytopal subdivision of a tetrahedron $\Delta$ into four copies of $F_8$, see \Cref{fig:c48}.

\begin{figure}[!htbp]
\input{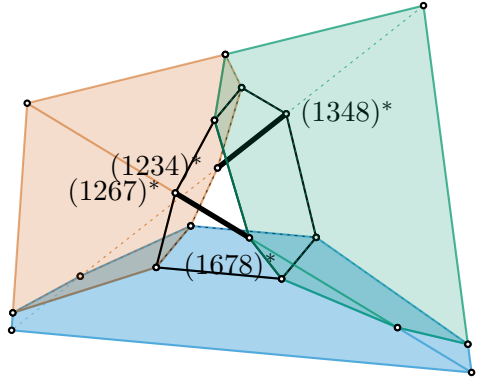}
\caption{The image of the stereographic projection of $C_4(8)^*$ from vertex $(3467)^*$. Facets $1^*$, $2^*$, $5^*$ and $8^*$ are drawn in white (center), orange (top left), blue (bottom), and green (top right) respectively.}
\label{fig:c48}
\end{figure}

By definition, edge $(134)^*$ belongs to wedges $1^*$, $3^*$ and $4^*$, while edge $(167)^*$ belongs to wedges $1^*$, $6^*$, and $7^*$.
Since $\Delta$ is convex, these two edges $(134)^*$ and $(167)^*$ must be skew, as can be seen in the stereographic projection, see \Cref{fig:c48}.
If the wedge $1^*$ is inscribed, the squares $(12)^*$, $(14)^*$, $(15)^*$, $(16)^*$, and $(18)^*$ are inscribed on a common $2$-sphere. 
By \Cref{lem:Miquel}, the four vertices $(1234)^*$, $(1267)^*$, $(1348)^*$, and $(1678)^*$ are then coplanar, forcing $(134)^*$ and $(167)^*$ to be both coplanar and skew which is impossible.

Since the dimension is even, facets of $C_4(8)^*$ are related through combinatorial automorphisms of the cyclic polytope. 
Hence, for each facet there is an appropriate choice of vertex that provides the required configuration in the stereographical projection. 
\end{proof}

\begin{corollary}
\label{cor:c48}
The cyclic polytope $C_4(8)$ is not circumscribable.
\end{corollary}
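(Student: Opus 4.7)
The corollary is an immediate consequence of Theorem \ref{thm:c48} combined with the duality stated in Lemma~\ref{lem:classic_i}, so the plan is simply to assemble these two pieces. I would argue by contradiction: suppose $C_4(8)$ is circumscribable. Then by Lemma~\ref{lem:classic_i}, $C_4(8)^*$ admits an inscribed realization $P$ whose vertices all lie on some sphere $\mathbb{S}\subset\R^4$.

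The key observation is that an inscribed polytope automatically has all of its facets inscribed, which is the content of Lemma~\ref{lem:classic_iii} (applied to facets rather than arbitrary faces). Concretely, for any facet $F$ of $P$, the vertices of $F$ lie on $\mathbb{S}\cap\aff(F)$; since $\aff(F)$ is a hyperplane of $\R^4$ and the vertices of $F$ affinely span $\aff(F)$ (so $\aff(F)$ is not tangent to $\mathbb{S}$), this intersection is a $2$-sphere, and hence $F$ is inscribed.

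This however directly contradicts Theorem~\ref{thm:c48}, which asserts that no realization of $C_4(8)^*$ has any inscribed facet. Therefore the initial assumption is untenable and $C_4(8)$ is not circumscribable. There is no real obstacle in this argument: all the geometric and combinatorial work has already been done in Theorem~\ref{thm:c48}, and the corollary is a one-line dualization. The only thing worth being slightly careful about is the non-degeneracy remark that $\aff(F)\cap\mathbb{S}$ is a genuine $2$-sphere rather than a point, but this is automatic because $F$ is a $3$-dimensional facet whose vertices affinely span $\aff(F)$.
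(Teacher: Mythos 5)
Your proposal is correct and follows exactly the same route as the paper: combine Theorem~\ref{thm:c48} with the duality and heredity statements of Lemma~\ref{lem:classic}. The paper phrases it contrapositively in one line ("no inscribed facet $\Rightarrow$ not inscribable $\Rightarrow$ dual not circumscribable"), while you phrase it as a contradiction and spell out the elementary observation that an inscribed polytope has inscribed facets, but the content is identical.
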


\begin{proof}
Since $C_4(8)^*$ has no realization with an inscribed facet, $C_4(8)^*$ is not inscribable and~$C_4(8)$ is not circumscribable by Lemma~\ref{lem:classic}.
\end{proof}

\subsection{Larger cyclic polytopes $C_d(k)$}
\label{ssec:larger_obstruction}

The obstruction in the case of \(C_4(8)\) appears as a subcomplex in a large class of cyclic polytopes.
On the one hand, by taking specific successive stereographic projections until the resulting object is a tetrahedron, the lines spanned by opposite edges of the tetrahedron are skew.
On the other hand, the tetrahedron contains a projected face whose inscription forces these skew lines to be coplanar, leading to a contradiction.

\begin{theorem}
\label{thm:no_real_c48}
Let $k\geq 8$. No realization of $C_4(k)^*$ has an inscribed facet, although all its facets are inscribable. 
\end{theorem}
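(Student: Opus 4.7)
The plan is to extend the stereographic projection argument of Theorem 3.8 (the $k=8$ case) to every $k\ge 8$ by identifying a Miquel configuration inside a suitable facet of $C_4(k)^*$. Inscribability of every facet is immediate from Proposition 3.3, since each facet of $C_4(k)^*$ is combinatorially the wedge $F_k$. For the obstruction, I use the cyclic symmetry of $C_4(k)$ to reduce to the case where the allegedly inscribed facet is $1^*$, and I project stereographically from the vertex $v=(3467)^*$; the set $\{3,4,6,7\}$ satisfies Gale's evenness for all $k\ge 8$, as it decomposes into two disjoint interior consecutive blocks of even length.

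By Lemma \ref{lem:projection}, projecting from $v$ produces a polytopal subdivision of a tetrahedron $\Delta_v$ whose four facets are the images of $3^*,4^*,6^*,7^*$ and whose six edges are the images of the 2-faces $(ab)^*$ with $\{a,b\}\subset\{3,4,6,7\}$. In particular, the edges $(34)^*$ and $(67)^*$ of $C_4(k)^*$ project to opposite edges of $\Delta_v$, which lie on skew lines. The 1-faces $(134)^*$ and $(167)^*$ of $C_4(k)^*$ are edges of $1^*$ contained respectively in $(34)^*$ and $(67)^*$; their endpoints $(1234)^*,(134k)^*$ and $(1267)^*,(167k)^*$ are vertices, since each of the 4-subsets $\{1,2,3,4\}$, $\{1,3,4,k\}$, $\{1,2,6,7\}$, $\{1,6,7,k\}$ is Gale-even for every $k\ge 8$.

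Because $1^*$ is inscribed, each of its 2-faces lies on a circle. I apply the spherical Miquel theorem (Lemma \ref{lem:Miquel}) to the eight vertices
\[
(1245)^*,\ (1256)^*,\ (156k)^*,\ (145k)^*,\ (1234)^*,\ (1267)^*,\ (167k)^*,\ (134k)^*
\]
of $1^*$, in the Miquel order $p_1,\dots,p_8$. The first four are precisely the vertices of the quadrilateral 2-face $(15)^*$, providing the inner cocircular quadruple. The four side quadruples come from the 2-faces $(12)^*$, $(16)^*$, $(1k)^*$, and $(14)^*$: the quadrilaterals $(14)^*$ and $(16)^*$ consist of exactly these four required vertices each, while the larger polygons $(12)^*$ and $(1k)^*$ also contain the relevant four vertices among their vertices, hence on a common circle. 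Miquel then forces the four vertices $(1234)^*$, $(1267)^*$, $(167k)^*$, $(134k)^*$ to be coplanar.

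The contradiction is now immediate: $(1234)^*$ and $(134k)^*$ project into the image of edge $(34)^*$, while $(1267)^*$ and $(167k)^*$ project into the image of edge $(67)^*$, so the four projections lie on two skew lines and cannot be coplanar, while stereographic projection from $v$ preserves coplanarity of points distinct from $v$. The main technical hurdle is purely combinatorial bookkeeping: one must check, using Gale's evenness in block form, that for every $k\ge 8$ the listed 4-subsets are facets of $C_4(k)$ and that $(14)^*,(15)^*,(16)^*$ are quadrilaterals, while $(12)^*$ and $(1k)^*$ are polygons whose vertex sets contain the required four points. All these verifications reduce to counting Gale-even completions of the form $\{1,2,m,m{+}1\}$ and $\{1,m,m{+}1,k\}$, and are uniform in $k$.
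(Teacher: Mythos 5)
Your proof is correct and follows essentially the same route as the paper: both use Proposition~\ref{prop:wedge_real} for inscribability of the facets, project stereographically from the vertex $(3467)^*$ to produce a subdivision of a tetrahedron, observe that the images of $(34)^*$ and $(67)^*$ are skew, and apply the spherical Miquel theorem (Lemma~\ref{lem:Miquel}) to the same eight vertices $(1245)^*,(1256)^*,(156k)^*,(145k)^*,(1234)^*,(1267)^*,(167k)^*,(134k)^*$ of the facet $1^*$ to force $(1234)^*,(1267)^*,(167k)^*,(134k)^*$ to be coplanar, contradicting the skewness. You supply slightly more explicit bookkeeping (the explicit Miquel labeling $p_1,\dots,p_8$, the identification of which $(1m)^*$ is a quadrilateral versus a $(k-2)$-gon, and the Gale-evenness checks), but the argument is the one the paper gives.
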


\begin{proof}
The proof follows the proof of \Cref{thm:c48}.
Set \(P = C_4(k)^*\) and denote by $s$ the vertex $(3467)^*$ of $P$.
Using \Cref{lem:projection}, \(\pi_s(P)\) defines a subdivision of a tetrahedron with triangles \(\pi_s(3^*),\pi_s(4^*),\pi_s(6^*),\pi_s(7^*)\).
Notice that 
\begin{center}
the image \(\pi_s((34)^*)\) and the image \(\pi_s((67)^*)\) are skew. \quad $(\star)$  
\end{center}

Consider the polytope \(\pi_s(1^*)\). 
Since facet \(1^*\) does not contain vertex~\(s\), by \Cref{lem:projection} the polytope \(1^*\)~and \(\pi_s(1^*)\) are combinatorially isomorphic.
The facets of \(1^*\) are ridges of \(P\) and are labeled as \((1i)^*\) for some $1 \leq i \leq k$.
Among the facets of \(\pi_s(1^*)\) are $\pi_s((14)^*)$, $\pi_s((16)^*)$, $\pi_s((15)^*)$, $\pi_s((12)^*)$, and $\pi_s((1k)^*)$.
See \Cref{fig:subcube} for an illustration.

\begin{figure}[!htbp]
\input{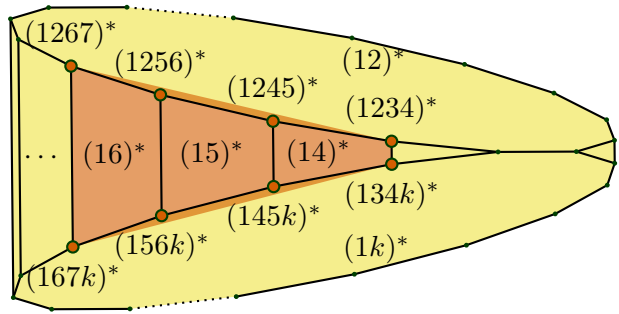}
\caption{A wedge and a subcomplex formed by eight vertices.}
\label{fig:subcube}
\end{figure}

Assume, for the sake of contradiction, that the facet $1^*$ is inscribed.
By \Cref{lem:projection}, its projection \(\pi_s(1^*)\) is also inscribed.
By Lemma~\ref{lem:classic_iii}, the five polygons $\pi_s((14)^*)$, $\pi_s((16)^*)$, $\pi_s((15)^*)$, $\pi_s((12)^*)$, and $\pi_s((1k)^*)$ are inscribed and by \Cref{lem:Miquel}, the four points \(\pi_s((1234)^*),\) \(\pi_s((134k)^*),\) \(\pi_s((1267)^*\}),\) \(\pi_s((167k)^*)\)  lie on a common plane.
This contradicts our previous observation $(\star)$ and thus \(1^*\) cannot be inscribed. 
\end{proof}

\begin{corollary}
Let $d\geq 4$ and $k\geq d+4$.
The cyclic polytope \(C_d(k)\) is not circumscribable.
\label{cor:all_cyclic}
\end{corollary}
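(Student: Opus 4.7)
The plan is to prove the corollary by induction on $d$, using \Cref{thm:no_real_c48} as the base case and the behaviour of vertex figures of cyclic polytopes as the inductive step. For the base case $d = 4$, the hypothesis $k \geq d+4$ becomes $k \geq 8$, which places us in the setting of \Cref{thm:no_real_c48}: $C_4(k)^*$ has no realization in which even a single facet is inscribed, hence no inscribed realization at all, and by \Cref{lem:classic_i} the polytope $C_4(k)$ is therefore not circumscribable.

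For the inductive step, assume the statement holds in dimension $d-1$ and suppose, for contradiction, that $C_d(k)$ is circumscribable for some $d \geq 5$ and some $k \geq d+4$. By \Cref{lem:classic_ii}, the vertex figure of $C_d(k)$ at any vertex $v$ is again circumscribable. The key combinatorial input is that the vertex figure of $C_d(k)$ at the vertex $v_1 = \nu_d(t_1)$ is combinatorially isomorphic to $C_{d-1}(k-1)$. This is a standard consequence of Gale's evenness condition: facets of $C_d(k)$ through $v_1$ correspond to subsets of $\{1,2,\ldots,k\}$ of the form $\{1, i_2, \ldots, i_d\}$ satisfying the evenness condition, and deleting the index $1$ produces precisely the subsets of $\{2,\ldots,k\}$ describing the facets of a cyclic $(d-1)$-polytope on $k-1$ vertices. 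Consequently $C_{d-1}(k-1)$ is circumscribable; but $k-1 \geq (d-1)+4$, contradicting the inductive hypothesis.

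The only non-trivial ingredient is the combinatorial identification $C_d(k)/v_1 \cong C_{d-1}(k-1)$. This is classical and follows from a direct bookkeeping with Gale's evenness condition; it can also be cited from \cite{ziegler_lectures_1995}. Once this is in place, the induction is immediate, and the statement for all $d \geq 4$ with $k \geq d+4$ follows.
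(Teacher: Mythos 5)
Your proof is correct and takes essentially the same approach as the paper: both reduce to $C_4(k')$ with $k'\geq 8$ by repeatedly passing to vertex figures of cyclic polytopes (the paper iterates at the largest-labeled vertex; you induct at $v_1$, which is the same reduction phrased inductively), then invoke \Cref{thm:no_real_c48} together with Lemma~\ref{lem:classic_ii} and Lemma~\ref{lem:classic_i}.
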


\begin{proof}
The case $d=4$ is \Cref{thm:no_real_c48}.
Hence, assume $d=4+j$ with $j\geq 1$ and consider \(C_d(k)\) with \(k\geq d+4\). 
The vertex figure of vertex $k$ in $C_d(k)$ is combinatorially isomorphic to \(C_{d-1}(k-1)\).  
We iteratively take vertex figures of the largest labeled vertex $j$ times until we have \(C_4(k-j)\).
By \Cref{thm:no_real_c48}, \(C_4(k-j)\) is not circumscribable and, by Lemma~\ref{lem:classic_ii}, we conclude that \(C_d(k)\) is not circumscribable.
\end{proof}

\begin{table}[!htbp]
\footnotesize
\begin{tabular}{cc|cccccccc}
& & 		& 		  & 		& $k$	  & 		& 		   & 		  & \\[-3mm]
\wddots & \lcel{$C_d(k)$} & 3 		& 4 	  & 5		& 6	   	  & 7 		& 8 	   & 9 	      & \\\hhline{*{10}{-}}
\wddots & \lnum{2}		  & \bcheck & \bcheck & \bcheck & \bcheck & \bcheck & \bcheck  & \bcheck  & \bcdots \\ 
\wddots & \lnum{3}        & 		& \bcheck & \bcheck & \bcheck & \bcheck & \bcheck  & \bcheck  & \bcdots \\ 
$d$ 	& \lnum{4}    	  & \wddots & 		  & \bcheck & \bcheck & \bcheck & \rtimes  & \rtimes  & \rcdots \\ 
\wddots & \lnum{5}        & 		& 		  & 		& \bcheck & \bcheck & ?		   & \rtimes  & \rcdots \\ 
\wddots	& \lnum{6}        & 		& 		  & 		& 		  & \bddots & \bddots  & $\ddots$ & \rddots \\ 
\vspace{0.25cm}
\end{tabular}
\caption{Circumscribability of cyclic polytopes $C_d(k)$}
\label{tab:cyclic_poly}
\end{table}

Altogether, we have the following brief summary regarding the circumscribability of cyclic polytopes, see also \Cref{tab:cyclic_poly}:
\begin{itemize}
	\item Since polygons are circumscribable, \(C_2(k)\) is trivially circumscribable. 
	\item The cyclic polytope \(C_d(d+1)\) is combinatorially isomorphic to the $d$-simplex and hence circumscribable. 
	\item Similarly, \(C_d(d+2)\) is a direct sum of simplices and its dual is the product of simplices which is inscribable. Therefore $C_d(d+2)$ is circumscribable.
	\item The cyclic polytope \(C_3(k)=F_k^*\) is circumscribable, see \Cref{ssec:wedges}.
\end{itemize}
The only class of cyclic polytopes whose circumscribability is not determined is \(C_d(d+3)\).

\begin{question}
Is \(C_d(d+3)\) circumscribable for all \(d \geq 5\)?
\end{question}

In theory, this question can be addressed by the methods described in Sections \ref{ssec:47interpol} and \ref{ssec:47stereo}. 
Interpolation behaves interestingly: for $d=5$, we have $20$ facets and the space of quadrics in~$\P^5$ has dimension $21$. 
Hence, we expect a unique quadric containing all facet normals of a realization of $C_5(8)$. 
Computations suggest that the space of quadrics through these $20$ points is $3$-dimensional generically. 
Searching for a quadric with the right signature in this space is challenging. For larger $d$, the number of facets of $C_d(d+3)$ is bigger than the dimension of the space of quadrics. However, for $d=6$, the facet normals generically lie on a unique quadric. We did not manage to find one with the right signature. 
The computational approach via stereographic projection is already challenging for $C_4(7)$. 
For higher values of $d$, we are looking for Delaunay subdivisions of a $(d-1)$-dimensional simplex, another computational challenge.

\section{Forbidden subposet}
\label{sec:forbidden}

We present a combinatorial abstraction of the geometric obstruction presented in \Cref{ssec:48obstruction} using a poset. 

\begin{definition}[{Obstruction~$\mathcal{X}$}]
\label{def:subposet}
Let $P$ be a $d$-polytope.
We identity a hypothetical subposet~$\mathcal{X}$ of the face lattice of $P$ that creates an obstruction to inscribability.
This subposet consists of nine vertices $\{0,1,\dots,8\}$, two edges $\{12,34\}$, seven $2$-faces $\{A,B,C,D,E,X,Y\}$, and one $3$-face $\Phi$ that satisfy the following geometric properties in $P$. 

\begin{enumerate}[label=\roman{enumi}),ref=\ref{def:subposet}~\emph{\roman{enumi})}]
	\item The vertex $0$ has exactly $d$ neighboring vertices in $P$.
	\item The intersection of faces $X$ and $Y$ is the vertex $0$, which is not a vertex of $\Phi$.\label{def:subposet_ii} 
	\item $X$ contains the edge $12$.
	\item $Y$ contains the edge $34$.
	\item The $2$-faces $A,B,\dots, E$ are faces of $\Phi$.
	\item The $2$-faces $A,B,\dots,E$ contain the following vertices:
	\[
	\hspace{0.75cm}
	\{1,2,5,6\}\subseteq A,\ \{1,3,5,7\} \subseteq B,\ \{5,6,7,8\} \subseteq C,\ \{2,4,6,8\} \subseteq D,\ \{3,4,7,8\} \subseteq E.
	\]
\end{enumerate}
\end{definition}

\begin{remark}\ 
\begin{enumerate}[label=\alph{enumi})]
\item Since the $2$-faces $A$ and $B$ contain the vertices $1$ and $5$, $15$ must be an edge of $P$.
Similarly, $26$, $37$, $48$, $56$, $57$, $68$, $78$ are edges of $P$.
It follows that $C$ is a square and since $12$ and $34$ are edges of $P$ it follows that $A$, and $E$ are square faces too. 
\item Furthermore, by property \emph{ii)}, the face $\Phi$ does not contain $X$ nor $Y$.
\end{enumerate}
\end{remark}

See \Cref{fig:scheme_forbidden} for a scheme representing the five $2$-faces $A,\dots,E$ and \Cref{fig:forbidden} for an illustration of the Hasse diagram of $\mathcal{X}$.

\begin{figure}[!htbp]
\begin{tikzpicture}[%
	vertex/.style={inner sep=1pt,circle,draw=black!25!black,fill=black!75!black,thick}]

\def\innerdist{1}
\def\outerdist{2}

\coordinate (0) at (0,0);

\coordinate (a) at (\innerdist,0);
\coordinate (b) at (0,\innerdist);
\coordinate (c) at (-\innerdist,0);
\coordinate (d) at (0,-\innerdist);

\coordinate (A) at (\outerdist,0);
\coordinate (B) at (0,\outerdist);
\coordinate (C) at (-\outerdist,0);
\coordinate (D) at (0,-\outerdist);

\foreach \x/\y in {a/A,b/B,c/C,d/D}
  \draw[thick] (\x) -- (\y);

\draw[thick] (a) -- (b) -- (c) -- (d) -- cycle;
\draw[thick] (B) -- (C);
\draw[thick] (D) -- (A);

\draw[thick,out=60,in=30] (A) to (B); 
\draw[thick,out=240,in=210] (C) to (D); 

\node at (0,0) {$C$};
\node at ($0.75*(-\innerdist,\innerdist)$) {$A$};
\node at ($0.75*(\innerdist,-\innerdist)$) {$E$};
\node at ($0.75*(\innerdist,\innerdist)$) {$B$};
\node at ($0.75*(-\innerdist,-\innerdist)$) {$D$};

\node[vertex,label=left:$7$] at (a) {};
\node[vertex,label=below:$5$] at (b) {};
\node[vertex,label=right:$6$] at (c) {};
\node[vertex,label=above:$8$] at (d) {};
\node[vertex,label=right:$3$] at (A) {};
\node[vertex,label=above:$1$] at (B) {};
\node[vertex,label=left:$2$] at (C) {};
\node[vertex,label=below:$4$] at (D) {};

\node (circle5) [draw,dashed,red, thick,circle through=(A)] at (0,0) {};

\end{tikzpicture}
\caption{A schematization of the adjacencies between the $2$-faces $A,\dots,E$. The dashed circle is the circle obtained from Miquel's theorem.}
\label{fig:scheme_forbidden}
\end{figure}
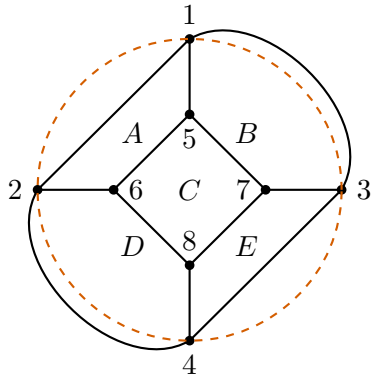

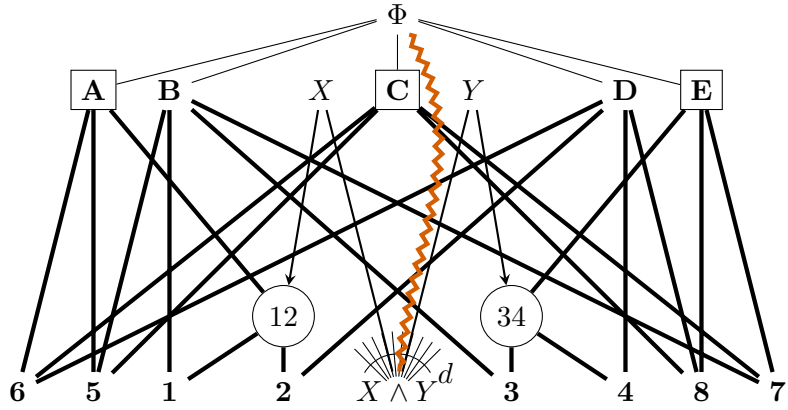
\begin{figure}[!htbp]
\begin{tikzpicture}[decoration = {zigzag,segment length = 2mm, amplitude = 0.5mm}]

\node at (0,-1) (9){$X\wedge Y$};

\foreach \angle in {45, 55, ..., 135}
  \draw[shorten <=0.2cm, shorten >= 0.2cm] (0,-1) -- +(\angle:1);
\draw ($(0,-1)+(30:0.5)$) arc (30:150:0.5);
\node at ($(0,-1)+(30:0.5)+(0.2,0)$) {$d$};

\node at (-3,-1) (1){\bf 1};
\node at (-1.5,-1) (2){\bf 2};
\node at (-4,-1) (5){\bf 5};
\node at (-5,-1) (6){\bf 6};
\node at (5,-1) (7){\bf 7};
\node at (4,-1) (8){\bf 8};
\node at (1.5,-1) (3){\bf 3};
\node at (3,-1) (4){\bf 4};

\node[circle,draw=black] at (-1.5,0) (12){12};
\node[circle,draw=black] at (1.5,0) (34){34};

\node at (-1,3) (ab){$X$};
\node[rectangle,draw=black] at (-4,3) (bf){$\bf A$};
\node at (-3,3) (hf){$\bf B$};
\node[rectangle,draw=black]  at (0,3) (gf){$\bf C$};
\node at (3,3) (if){$\bf D$};
\node[rectangle,draw=black]  at (4,3) (cf){$\bf E$};
\node at (1,3) (cd){$Y$};

\node at (0,4) (f){$\Phi$};

\draw[ultra thick] (2) -- (12) -- (1);
\draw[<-,thick,>=stealth] (12) -- (ab);
\draw[ultra thick] (12) -- (bf);
\draw[ultra thick] (3) -- (34) -- (4);
\draw[ultra thick] (cf) -- (34);
\draw[->,thick,>=stealth] (cd) -- (34);

\draw[ultra thick] (5) -- (bf) -- (6) -- (gf) -- (5) -- (hf) -- (1);
\draw[ultra thick] (7) -- (gf) -- (8) -- (cf) -- (7) -- (hf) -- (3);
\draw[ultra thick] (4) -- (if) -- (8);
\draw[ultra thick] (6) -- (if) -- (2);
\draw (f) -- (cf);
\draw (f) -- (bf);
\draw (if) -- (f) -- (hf);
\draw[thick] (ab) -- (9) -- (cd);
\draw (gf) -- (f);
\draw[ultra thick,color=red,decorate] (f) to[out=300, in=85] (9);

\end{tikzpicture}
\caption{An inscribability obstruction poset $\mathcal{X}$. The zig-zag edge represents a required non-relation.}
\label{fig:forbidden}
\end{figure}

Assuming that $\Phi$ is inscribed, Miquel's theorem implies that the edges $12$ and $34$ are coplanar.
Since $X$ and $Y$ are two $2$-faces intersecting in exactly one vertex $0$ with exactly $d$ neighbors, the edges $12$ contained in $X$ and the edge $34$ contained in~$Y$ must be skew.
Since the edges $12$ and $34$ cannot be simultaneously coplanar and skew, we obtain the following obstruction lemma.

\begin{lemma}[Obstruction Lemma]
\label{lem:obstruction}
Let $P$ be a $d$-polytope.
If the face lattice of $P$ admits $\mathcal{X}$ as a subposet with the properties:
\begin{itemize}
	\item[T)] The meet $X\wedge Y$ in the face lattice of $P$ is $0$, (Touching)
	\item[S)] $0$ has exactly $d$ covers in the face lattice of $P$, (Simple)
\end{itemize}
then $P$ has no realization where the face $\Phi$ is inscribed. \hfill $\square$
\end{lemma}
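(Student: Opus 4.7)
I would prove the lemma by contradiction, assuming that $P$ admits a realization in which $\Phi$ is inscribed. The plan is to derive two incompatible conclusions about the affine lines $L_{12}$ and $L_{34}$ spanned by the edges $12$ and $34$: coplanarity from Miquel's theorem applied inside the circumsphere of $\Phi$, and skewness from the local structure at the simple vertex $0$.

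For the coplanarity side, $\Phi$ being inscribed places its eight vertices $1,2,\ldots,8$ on a common $2$-sphere. Each $2$-face $A, B, C, D, E$ of $\Phi$ is contained in an affine plane, so the five quadruples $\{1,2,5,6\}$, $\{1,3,5,7\}$, $\{5,6,7,8\}$, $\{2,4,6,8\}$, $\{3,4,7,8\}$ are coplanar. Together with the unknown quadruple $\{1,2,3,4\}$, these are exactly the six facets of a $3$-cube labeling of the eight vertices; for instance, the relabeling $p_1=5$, $p_2=6$, $p_3=8$, $p_4=7$, $p_5=1$, $p_6=2$, $p_7=4$, $p_8=3$ identifies our five given quadruples with the five hypotheses of Lemma~\ref{lem:Miquel}, whose conclusion then gives coplanarity of $\{1,2,3,4\}$. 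In particular, $L_{12}$ and $L_{34}$ lie in a common affine plane.

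For the skewness side, property (S) guarantees that $0$ is a simple vertex of $P$, so its $d$ incident edges in $P$ have linearly independent directions. Any $2$-face through $0$ is a convex polygon whose two edges at $0$ form a pair of these $d$ edges; property (T), i.e.\ $X \wedge Y = 0$, then forces $X$ and $Y$ to share no edge of $P$ at $0$. Consequently $X$ and $Y$ together contribute four linearly independent edge directions at $0$, which yields $\aff(X) \cap \aff(Y) = \{0\}$. Since $0, 1, 2$ are three distinct vertices of the convex polygon $X$ with $12$ a side, the line $L_{12} \subseteq \aff(X)$ avoids $0$, and similarly $L_{34} \subseteq \aff(Y)$ avoids $0$. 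A short case analysis on any candidate plane $\Pi \supseteq L_{12} \cup L_{34}$ --- splitting on whether $\Pi$ equals $\aff(X)$, equals $\aff(Y)$, or coincides with neither --- forces in each case a point or a direction into $\aff(X) \cap \aff(Y) = \{0\}$ that cannot lie there. Hence $L_{12}$ and $L_{34}$ are skew, contradicting the Miquel conclusion.

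The main obstacle is purely bookkeeping: setting up the relabeling that identifies the five given quadruples with those in Lemma~\ref{lem:Miquel}, and confirming from the linear algebra at the simple vertex $0$ that $\aff(X) \cap \aff(Y) = \{0\}$. The substance of the lemma is really captured by the schematic in \Cref{fig:scheme_forbidden}: properties (T) and (S) are engineered precisely to make $L_{12}$ and $L_{34}$ skew, while the subposet $\Phi \supseteq A, B, C, D, E$ is engineered to encode the Miquel configuration forcing those lines to be coplanar.
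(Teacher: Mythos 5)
Your proof is correct and takes essentially the same route as the paper: the paper's own argument (given in the discussion immediately preceding the lemma) is precisely that the inscribed face $\Phi$ forces $12$ and $34$ to be coplanar via Miquel, while the Simple and Touching hypotheses force them to be skew, yielding a contradiction. Your write-up simply makes explicit the relabeling that matches the five quadruples to the hypotheses of Lemma~\ref{lem:Miquel} and the linear-algebra verification that at a simple vertex two $2$-faces meeting only in that vertex span complementary planes, details the paper leaves implicit.
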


\begin{algorithm}[!htbp]
\small
\begin{flushleft}
\textbf{Input:} A combinatorial type of polytope $P$\\
\textbf{Output:} Either finds a Miquel's polytope $\Phi$ and two $2$-faces $X,Y$ or shows that it satisfies the necessary condition.
\end{flushleft}
\caption{Checking for obstruction $\mathcal{X}$ in the face lattice $\Lambda_P$ of a $d$-polytope $P$}\label{algo:obstruction}
\begin{algorithmic}[1]
\Procedure{FindObstruction}{$P$}\Comment{Tests the presence of $\mathcal{X}$ in $\Lambda_P$}
\State $P_3:=\{f\in \Lambda_P~:~ \dim f = 3\}$
\State $Q\gets$ $3$-skeleton of $P$\Comment{Makes incidence verification linear}
\State Found $\gets {\tt False}$
\While{$\neg$ Found and $|P_3|>0$}\Comment{$O(k^4)$}
\State $\Phi\gets$ an element of $P_3$
\State $P_3\gets P_3\setminus\{\Phi\}$
\For{each square configuration $A,C,E$ in $\Phi$}\Comment{$O(k^3)$}
\If{$\Phi$ contains faces $B$ and $D$}
\For{$X$ cover of $12$, and $Y$ cover of $34$}\Comment{$O(k^2)$}
\If{$0:=X\wedge Y$ is a simple vertex and $0\not\in\Phi$}
\State Found $\gets {\tt True}$
\EndIf
\EndFor
\EndIf
\EndFor
\EndWhile\label{euclidendwhile}
\If{Found}
\textbf{return} $(\text{Found},\Phi,X,Y)$ \Comment{The obstruction was found.}
\Else
\textbf{ return} $\tt None$ \Comment{The necessary condition is fulfilled.}
\EndIf
\EndProcedure

\end{algorithmic}
\label{alg}
\end{algorithm}

\Cref{alg} uses Obstruction~$\mathcal{X}$ to detect non-inscribability.
It works in any dimension $d\geq4$ and only requires the $3$-skeleton of the polytope.
On the one hand, the algorithm can be generalized to obstructions obtained from other planar ``Delaunay'' circle theorems and to larger face figures.
On the other hand, it only provides a necessary condition for a combinatorial type of polytope to be inscribable.
A naive implementation of \Cref{alg} leads to a running time of $O(k^9)$, where $k$ is the number of vertices of the polytope.

Running this algorithm on the \(8\)-facet polytopes results in a combinatorial type with \(f\)-vector \((14,31,25,8)\) which is not inscribable because it contains \(\mathcal{X}\) and has the Simple and Touching properties.
The facets of this combinatorial type are
\begin{align*}
&0126ABC, 0159BCD, 02367ACD, 04589ABD,\\
&123456789, 12345AB, 16789CD, 3478AD.
\end{align*}

The illustration of the stereographic projection from vertex $0$ of this polytope in Figure~\ref{fig:small_non} shows that this is the smallest with \(8\) facets; contracting any face destroys some critical component of the obstruction.

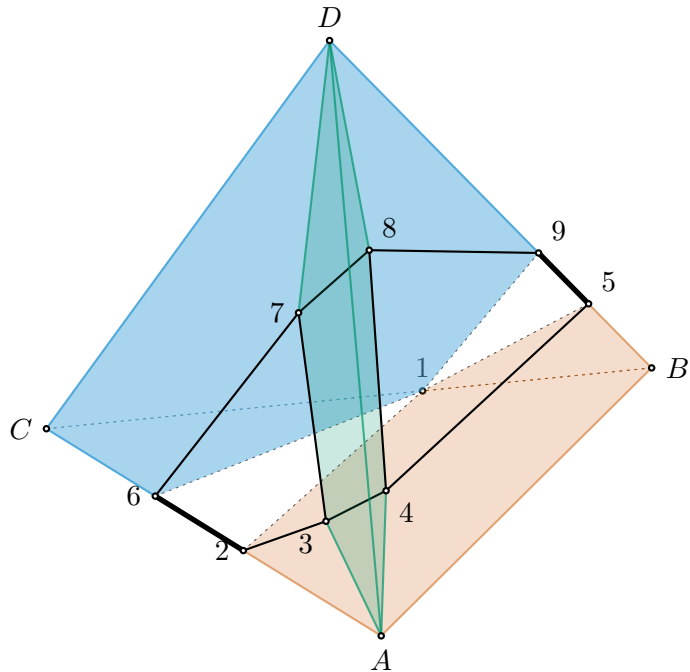
\begin{figure}[!htbp]
\begin{center}
\begin{tikzpicture}%
	[x={(-0.084911cm, 0.986684cm)},
	y={(0.599689cm, -0.060579cm)},
	z={(-0.795715cm, -0.150945cm)},
	scale=4.000000,
	back/.style={dotted, thin,opacity=0.7},
	edge/.style={color=black, thick, line cap=round},
	facet/.style={fill=white,fill opacity=0},
	edge3/.style={color=red!95!black, thick, line cap=round, opacity=0.500000},
	facet3/.style={fill=red!95!black,fill opacity=0.200000},
	edge2/.style={color=skyblue!95!black, thick, line cap=round},
	facet2/.style={fill=skyblue!95!black,fill opacity=0.500000},
	edge1/.style={color=bluishgreen!95!black, thick, line cap=round, opacity=0.700000},
	facet1/.style={fill=bluishgreen!95!black,fill opacity=0.200000},
	vertex/.style={inner sep=0.025cm,circle,draw=black,fill=white,thick},
	decoration={snake,amplitude=.3mm,segment length=0.75mm}]
%
%

\coordinate (0.00000, 0.00000, 0.00000) at (0.00000, 0.00000, 0.00000);
\coordinate (2.00000, 0.00000, 0.00000) at (2.00000, 0.00000, 0.00000);
\coordinate (1.00000, 1.62500, 0.00000) at (1.00000, 1.62500, 0.00000);
\coordinate (1.00000, 0.61538, 1.74109) at (1.00000, 0.61538, 1.74109);

\coordinate (1.19580, 1.30682, 0.00000) at (1.19580, 1.30682, 0.00000);
\coordinate (0.49354, 0.09658, 0.00000) at (0.49354, 0.09658, 0.00000);
\coordinate (0.42857, 0.08791, 0.24873) at (0.42857, 0.08791, 0.24873);
\coordinate (1.35110, 1.05447, 0.00000) at (1.35110, 1.05447, 0.00000);
\coordinate (0.41176, 0.25339, 0.71692) at (0.41176, 0.25339, 0.71692);
\coordinate (1.30299, 0.11895, 0.00000) at (1.30299, 0.11895, 0.00000);
\coordinate (1.13793, 0.10610, 0.30019) at (1.13793, 0.10610, 0.30019);
\coordinate (0.67480, 0.41526, 1.17488) at (0.67480, 0.41526, 1.17488);
\coordinate (1.00000, 1.24298, 0.65879) at (1.00000, 1.24298, 0.65879);

\draw[edge,back] (1.00000, 1.62500, 0.00000) -- (1.00000, 0.61538, 1.74109);

\draw[edge3,back] (1.00000, 1.62500, 0.00000) -- (1.00000, 1.24298, 0.65879);

\draw[edge2,back] (1.00000, 0.61538, 1.74109) -- (1.00000, 1.24298, 0.65879);

\draw[edge,back] (1.19580, 1.30682, 0.00000) -- (1.00000, 1.24298, 0.65879);
\draw[edge,back] (1.35110, 1.05447, 0.00000) -- (1.00000, 1.24298, 0.65879);
\draw[edge,back] (0.41176, 0.25339, 0.71692) -- (1.00000, 1.24298, 0.65879);
\draw[edge,back] (0.67480, 0.41526, 1.17488) -- (1.00000, 1.24298, 0.65879);

\node[vertex,label=above:$1$] at (1.00000, 1.24298, 0.65879)     {};
\fill[facet] (1.00000, 1.62500, 0.00000) -- (0.00000, 0.00000, 0.00000) -- (2.00000, 0.00000, 0.00000) -- cycle {};
\fill[facet] (1.00000, 0.61538, 1.74109) -- (0.00000, 0.00000, 0.00000) -- (2.00000, 0.00000, 0.00000) -- cycle {};

\fill[facet3] (1.00000, 1.24298, 0.65879) -- (1.19580, 1.30682, 0.00000) -- (0.49354, 0.09658, 0.00000) -- (0.42857, 0.08791, 0.24873) -- (0.41176, 0.25339, 0.71692) -- cycle {};
\fill[facet3] (0.49354, 0.09658, 0.00000) -- (0.00000, 0.00000, 0.00000) -- (1.00000, 1.62500, 0.00000) -- (1.19580, 1.30682, 0.00000) -- cycle {};
\fill[facet3] (0.42857, 0.08791, 0.24873) -- (0.00000, 0.00000, 0.00000) -- (0.49354, 0.09658, 0.00000) -- cycle {};
\fill[facet3] (0.41176, 0.25339, 0.71692) -- (0.00000, 0.00000, 0.00000) -- (0.42857, 0.08791, 0.24873) -- cycle {};

\fill[facet2] (1.00000, 1.24298, 0.65879) -- (1.35110, 1.05447, 0.00000) -- (1.30299, 0.11895, 0.00000) -- (1.13793, 0.10610, 0.30019) -- (0.67480, 0.41526, 1.17488) -- cycle {};
\fill[facet2] (1.13793, 0.10610, 0.30019) -- (2.00000, 0.00000, 0.00000) -- (1.30299, 0.11895, 0.00000) -- cycle {};
\fill[facet2] (0.67480, 0.41526, 1.17488) -- (1.00000, 0.61538, 1.74109) -- (2.00000, 0.00000, 0.00000) -- (1.13793, 0.10610, 0.30019) -- cycle {};
\fill[facet2] (1.30299, 0.11895, 0.00000) -- (2.00000, 0.00000, 0.00000) -- (1.35110, 1.05447, 0.00000) -- cycle {};


\draw[edge3] (0.00000, 0.00000, 0.00000) -- (1.00000, 1.62500, 0.00000);
\draw[edge1] (0.00000, 0.00000, 0.00000) -- (0.49354, 0.09658, 0.00000);
\draw[edge1] (0.00000, 0.00000, 0.00000) -- (0.42857, 0.08791, 0.24873);
\draw[edge3] (0.00000, 0.00000, 0.00000) -- (0.41176, 0.25339, 0.71692);
\draw[edge3] (1.00000, 1.62500, 0.00000) -- (1.19580, 1.30682, 0.00000);
\draw[edge] (1.19580, 1.30682, 0.00000) -- (0.49354, 0.09658, 0.00000);
\draw[edge] (0.49354, 0.09658, 0.00000) -- (0.42857, 0.08791, 0.24873);
\draw[edge] (0.42857, 0.08791, 0.24873) -- (0.41176, 0.25339, 0.71692);

\draw[edge2] (2.00000, 0.00000, 0.00000) -- (1.00000, 0.61538, 1.74109);
\draw[edge2] (2.00000, 0.00000, 0.00000) -- (1.35110, 1.05447, 0.00000);
\draw[edge1] (2.00000, 0.00000, 0.00000) -- (1.30299, 0.11895, 0.00000);
\draw[edge1] (2.00000, 0.00000, 0.00000) -- (1.13793, 0.10610, 0.30019);
\draw[edge2] (1.00000, 0.61538, 1.74109) -- (0.67480, 0.41526, 1.17488);
\draw[edge] (1.35110, 1.05447, 0.00000) -- (1.30299, 0.11895, 0.00000);
\draw[edge] (1.30299, 0.11895, 0.00000) -- (1.13793, 0.10610, 0.30019);
\draw[edge] (1.13793, 0.10610, 0.30019) -- (0.67480, 0.41526, 1.17488);

\fill[facet1] (1.13793, 0.10610, 0.30019) -- (2.00000, 0.00000, 0.00000) -- (0.00000, 0.00000, 0.00000) -- (0.42857, 0.08791, 0.24873) -- cycle {};
\fill[facet1] (1.30299, 0.11895, 0.00000) -- (2.00000, 0.00000, 0.00000) -- (0.00000, 0.00000, 0.00000) -- (0.49354, 0.09658, 0.00000) -- cycle {};

\draw[edge,line width=2pt] (1.19580, 1.30682, 0.00000) -- (1.35110, 1.05447, 0.00000);
\draw[edge] (0.49354, 0.09658, 0.00000) -- (1.30299, 0.11895, 0.00000);
\draw[edge] (0.42857, 0.08791, 0.24873) -- (1.13793, 0.10610, 0.30019);
\draw[edge,line width=2pt] (0.41176, 0.25339, 0.71692) -- (0.67480, 0.41526, 1.17488);

\draw[edge1] (0.00000, 0.00000, 0.00000) -- (2.00000, 0.00000, 0.00000);
\node[vertex,label=below:$A$] at (0.00000, 0.00000, 0.00000)     {};
\node[vertex,label=above:$D$] at (2.00000, 0.00000, 0.00000)     {};
\node[vertex,label=right:$B$] at (1.00000, 1.62500, 0.00000)     {};
\node[vertex, label=left:$C$] at (1.00000, 0.61538, 1.74109)     {};

\node[vertex,label=above right:$5$] at (1.19580, 1.30682, 0.00000)     {};
\node[vertex,label=below right:$4$] at (0.49354, 0.09658, 0.00000)     {};
\node[vertex,label=below left:$3$] at (0.42857, 0.08791, 0.24873)     {};
\node[vertex,label=above right:$9$] at (1.35110, 1.05447, 0.00000)     {};
\node[vertex, label=left:$2$] at (0.41176, 0.25339, 0.71692)     {};
\node[vertex,label=above right:$8$] at (1.30299, 0.11895, 0.00000)     {};
\node[vertex, label=left:$7$] at (1.13793, 0.10610, 0.30019)     {};
\node[vertex, label=left:$6$] at (0.67480, 0.41526, 1.17488)     {};

\end{tikzpicture}
\end{center}
\caption{The smallest polytope with 8 facets that contains the obstruction~\(\mathcal{X}\) with the Simple and Touching properties from \Cref{lem:obstruction}}
\label{fig:small_non}
\end{figure}

\section{The neighborly $4$-polytopes with $8$ vertices $\mathcal{N}_4(8)$}
\label{sec:neighborly}

In the previous sections, we identified a combinatorial barrier to inscribability.
We use this barrier, and a slight generalization of it to prove the following theorem.

\begin{theorem}
No polytope with $f$-vector $(8,28,40,20)$ is circumscribable. 
Dually, no polytope with $f$-vector $(20,40,28,8)$ is inscribable.
\label{thm:n48}
\end{theorem}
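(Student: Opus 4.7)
The plan is to reduce the theorem to the Obstruction Lemma (or a mild variant thereof) applied separately to each of the three neighborly $4$-polytopes on $8$ vertices. By the classification of Altshuler and Steinberg \cite{altshuler_complete_1985}, there are exactly three combinatorial types of neighborly $4$-polytopes on $8$ vertices, namely $C_4(8)$ and two further types, which I denote $N_1$ and $N_2$. All three share the $f$-vector $(8,28,40,20)$; dually, their polars have $f$-vector $(20,40,28,8)$ and these three are the only polytopes with this $f$-vector. Hence it suffices to show that each of $C_4(8)^*$, $N_1^*$, and $N_2^*$ is not inscribable.

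The first case is already handled: $C_4(8)^*$ is not inscribable by \Cref{thm:c48} (equivalently, the case $k=8$ of \Cref{thm:no_real_c48}), since no realization of $C_4(8)^*$ even admits an inscribed facet. For the second and third cases, the plan is to locate a subposet of the form $\mathcal{X}$ in the face lattice of $N_i^*$ ($i=1,2$) satisfying the Simple and Touching conditions of \Cref{lem:obstruction}, which would immediately yield non-inscribability of $N_i^*$. Concretely, I would enumerate the $3$-faces of $N_i^*$ (equivalently the facets of $N_i$, of which there are $20$), and for each, look inside its face lattice for five $2$-faces $A,B,C,D,E$ arranged as in \Cref{def:subposet}, then seek two additional $2$-faces $X,Y$ of $N_i^*$ whose meet is a simple vertex disjoint from the Miquel $3$-face. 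This verification can be automated via \Cref{alg}.

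The main obstacle is that $N_1^*$ and $N_2^*$ do not enjoy the same vertex-transitive cyclic symmetry as $C_4(8)^*$, so the obstruction is not guaranteed to appear in the same uniform manner, and the straight $\mathcal{X}$-pattern may fail to appear at some facets. In that case a slight generalization is needed: rather than requiring the Miquel polytope to be a $3$-face of $N_i^*$ directly, one can work with the image of a $3$-face under stereographic projection from a well-chosen simple vertex, as was done in the proof of \Cref{thm:c48}. Since stereographic projection from a simple vertex preserves the incidence structure and the inscribed property of faces that avoid the projection point (\Cref{lem:projection}), any Miquel pattern appearing in the projection of an inscribed facet forces a coplanarity conclusion, which contradicts the skewness of two edges of the tetrahedral region $\Delta$ obtained from a simple vertex of $N_i^*$.

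I expect that combining these two moves---direct application of the Obstruction Lemma where possible, and passage through a stereographic projection where required---exhausts both $N_1^*$ and $N_2^*$, and together with the known case of $C_4(8)^*$ completes the proof. The computational verification of the obstruction in the face lattices of $N_1^*$ and $N_2^*$ is routine but must be done carefully, and \Cref{cor:main} then follows immediately.
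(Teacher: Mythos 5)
Your overall reduction is correct: by the classification of neighborly $4$-polytopes on $8$ vertices, only three combinatorial types have $f$-vector $(8,28,40,20)$, so the theorem reduces to showing each of the three dual polytopes fails to be inscribable, and the $C_4(8)$ case follows from \Cref{thm:c48}. Your treatment of the second type is also essentially what the paper does for $N_4^2(8)$: project from a simple vertex, apply Miquel's theorem to conclude two edges are coplanar, and observe these two edges lie in $2$-faces that meet only in the projection vertex, hence must be skew --- exactly the $\mathcal{X}$-obstruction with the Simple and Touching conditions.

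The gap is in the third type. For $N_4^3(8)^*$, the paper's proof does \emph{not} reduce to the skew-versus-coplanar contradiction you rely on. After applying Miquel's theorem in the projected wedge $v_1$ to conclude that the edges $02$ and $35$ are coplanar, the paper does not (and apparently cannot) exhibit two $2$-faces $X, Y$ meeting only in the projection vertex and containing $02$ and $35$ respectively. One endpoint of $35$ is a vertex of the boundary tetrahedron $\Delta$, so the two edges need not be skew; the $\mathcal{X}$-pattern as defined simply does not occur for the chosen vertex and wedge. Instead, the paper derives a contradiction from a different incidence: the convex hexagon $02BEFH$ in the adjacent wedge $v_3$ forces the line through $02$ to cross edge $AI$ strictly between $A$ and $B$, but the coplanarity of $02$ and $35$ forces that intersection point to coincide with $I$, collapsing $B$, $H$, $I$. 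This is a genuinely different geometric argument, not an instance of \Cref{lem:obstruction} or your ``slight generalization.'' Your proof plan either needs to locate a different projection vertex or wedge for $N_4^3(8)^*$ that does realize the $\mathcal{X}$-pattern (which you would have to exhibit explicitly and which the paper's choice does not supply), or it needs to incorporate a second, distinct family of geometric obstructions arising from Miquel-type coplanarities that contradict convexity in ways other than edge skewness.
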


\begin{proof}
There are three combinatorial types of polytope with the given $f$-vector \cite{gruenbaum_enumeration_1967}. 

\textbf{Case~1.} The first type is the cyclic polytope \(C_4(8)\), see~\Cref{cor:c48}.

\textbf{Case 2.} Consider the combinatorial type $N_4^2(8)$ given by the facet-vertex incidences below. We denote the vertices from $v_1$ to $v_8$. 
The numbers 0-9 and letters $A$ to $J$ denote facets.
\[
\begin{tabular}{l@{\hspace{0.5cm}}l}
$v_1: \{0,1,2,3,4,5,6,7,8,9\}$ (blue, bottom)  & $v_5: \{1,4,6,8,9,A,B,F,I,J\}$ (triangle ``18I'') \\
$v_2: \{2,3,4,5,6,7,A,B,C,D\}$ (white, center) & $v_6: \{0,1,2,4,A,E,F,H,I,J\}$ (triangle ``1HI'') \\
$v_3: \{0,2,3,A,B,C,D,E,F,G\}$ (orange, left)   & $v_7: \{0,1,3,5,8,C,E,G,H,J\}$ (triangle ``18H'') \\
$v_4: \{6,7,9,B,D,E,F,G,H,I\}$ (green, top) & $v_8: \{5,7,8,9,C,D,G,H,I,J\}$ (triangle ``8HI'') 
\end{tabular}
\]
\begin{figure}[!htbp]
\input{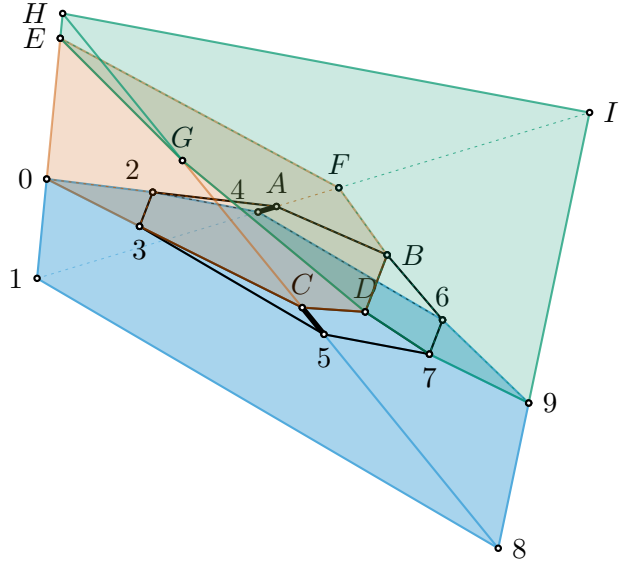}
\caption{The image of the stereographic projection of $N_4^2(8)$ from vertex~$J$. The two bold edges $4A$ and $5C$ in the wedge $v_2$ must be coplanar by Miquel's theorem.}
\label{fig:n48_2}
\end{figure}

Projecting $N_4^2(8)^*$ stereographically from vertex $J$, we obtain a subdivision of a tetrahedron as illustrated in Figure~\ref{fig:n48_2}.
Assuming that the wedge $v_2$ is inscribed, this implies that quadruples $(A,B,C,D)$, $(4,5,6,7)$, $(4,6,A,B)$, $(6,7,B,D)$, and $(5,7,C,D)$ are all coplanar and lie on a sphere. 
By \Cref{lem:Miquel}, this implies that quadruple $(4,5,A,C)$ is coplanar, and thus $4A$ and $5C$ are coplanar as well.
Edge~$4A$ belongs to wedges $v_2$, $v_5$ and $v_6$, while edge $5C$ belongs to wedges $v_2$, $v_7$, and~$v_8$.
Since $N_4^2(8)^*$ is convex, edges $4A$ and $5C$ must be skew, since they belong to two $2$-faces intersecting in $J$, see \Cref{fig:n48_2}.
Therefore, if $N_4^2(8)$ is convex, facet $v_2$ cannot be inscribed and $N_4^2(8)^*$ is not inscribable. 
Hence, $N_4^2(8)$ cannot be circumscribable by Lemma~\ref{lem:classic}.
Facets $v_2$ and $v_8$ are combinatorially equivalent in $N_4^2(8)^*$ and hence both cannot be inscribed; the problematic pair of edges in $v_8$ is $CD/IJ$.

\textbf{Case 3.}
The final combinatorial type $N_4^3(8)$ is determined by the following facet-vertex incidences:
\[
\begin{tabular}{l@{\hspace{0.5cm}}l}
$v_1: \{0,1,2,3,4,5,6,7,8,9\}$ (orange, back left)    &  $v_5: \{0,1,3,4,7,8,A,E,G,J\}$ (triangle ``3AE'') \\
$v_2: \{1,2,5,6,8,9,A,B,C,D\}$ (green, back right)  &  $v_6: \{3,5,8,A,B,C,D,H,I,J\}$ (triangle ``3AI'') \\
$v_3: \{0,2,7,9,B,C,E,F,G,H\}$ (blue, front bottom)   &  $v_7: \{0,1,2,A,B,E,F,H,I,J\}$ (triangle ``AEI'') \\
$v_4: \{4,6,7,9,C,D,F,G,H,I\}$ (white, front top)  &  $v_8: \{3,4,5,6,D,E,F,G,I,J\}$ (triangle ``3EI'') \\
\end{tabular}
\]
\begin{figure}[!htbp]
\input{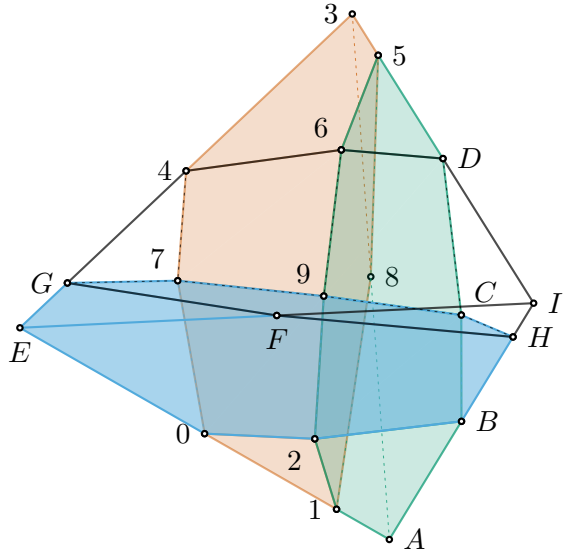}
\caption{The image of the stereographic projection of $N_4^3(8)$ from vertex~$J$.}
\label{fig:n48_3}
\end{figure}

Projecting $N_4^3(8)^*$ stereographically from vertex $J$, we obtain a subdivision of a tetrahedron as illustrated in Figure~\ref{fig:n48_3}.
Assuming that wedge $v_1$ is inscribed, it follows that the quadruples $(0,2,7,9)$, $(4,6,7,9)$, $(3,4,5,6)$, $(2,5,6,9)$, and $(0,3,4,7)$ are coplanar and the eight points lie on a sphere. 
By \Cref{lem:Miquel}, the quadruple $(0,2,3,5)$ must be coplanar, and thus $02$ and $35$ are also coplanar.
Now, consider the hexagon $02BEFH$ of wedge $v_3$.
Because the hexagon is convex, the line spanned by edge $02$ intersects edge $AI$ strictly between the point~$A$ and~$B$.
But $02$ and $35$ are coplanar, and since the line spanned by $02$ meets both the lines spanned by $AI$ and $35$, they must meet in $I$.
This forces the points $B$, $H$, and $I$ to collapse, a contradiction.
Hence $N_4^3(8)$ is not circumscribable by Lemmas~\ref{lem:classic} and~\ref{lem:projection}.
The facets $v_1,v_2,v_3$, and $v_4$ are combinatorially equivalent in $N_4^3(8)^*$. Hence, none of them can be inscribed.
\end{proof}

\section{Open Questions}
\label{sec:questions}

The previous sections provide some concrete support for \cite[Conjecture 8.4]{chen_scribability_2017}, that all large neighborly polytopes are not circumscribable. 
The following approach may yield a rich infinite class of not circumscribable neighborly polytopes.

Starting from the basepoint of \(C_d(k)\), with \(k> d+4\) and \(d>4\), we have one neighborly polytope for each pair $(k,d)$ that is not cicrcumscribable. 
From some neighborly polytopes, adding a single vertex yeilds another neighborly polytope. Iterating this process can give rise to many neighborly polytopes.
This is described in detail in \cite{padrol_many_2013}.  Dually, we may generate dual to neighborly polytopes by introducing a single new facet. 
For an example of this operation, compare \Cref{fig:c47_facets} and \Cref{fig:n48_2}: 
The facet \(6^*\) is split into two facets, the facet containing edge \(E0\) on the left, and the facet outlined in black. 
Once this operation is done, most of the squares are split into further squares. 
Having many squares in a common facet can lead to an obstruction to inscribability.
In particular, this sequence of three squares in a row described in  \Cref{sec:forbidden} is disadvantageous to inscribing a polytope.
There are neighborly $4$-polytopes with $9$ vertices that do not have Miquel's structure as in the statement of \Cref{lem:Miquel} as a facet.
We ask a more specific version of \cite[Conjecture~8.4]{chen_scribability_2017}:

\begin{question}
Are neighborly polytopes avoiding Miquel's arrangement in vertex figures circumscribable?
\end{question}

We restate the open questions brought up throughout the text.

\begin{question}
For which \(d\) is the polytope \(C_d(d+3)\) circumscribable?
\end{question}

This question has an obvious line of attack: Gale duality. 
Depending on the particular choice of reductions in the duality, understanding an alternating sequence of black and white dots on a line explains the general case.
If \(C_d(d+3)\) is not circumscribable for some \(d \geq 5\), it would constitute a counterexample to a conjecture raised by Gr\"unbaum \cite[Last sentence of Section 3.15]{gruenbaum_convex_1967}

Our final question has to do with inscribed realizations of \(C_4(7)^*\).
We gave two ways to see that it is inscribable, the second of which gives explicit coordinates.
However, the coordinates are in a degree twenty extension of \(\mathbb{Q}\).
We wonder what is the smallest degree extension needed to inscribe \(C_4(7)^*\).
In particular,

\begin{question}
Is \(C_4(7)^*\) inscribable with rational coordinates?
\end{question}

\bibliographystyle{myamsalpha}
\bibliography{bibliography}

\newpage
\appendix
\section{Inscribed realization of small $f$-vectors}
\label{appendix:inscribed_coordinates}

The following coordinates give rational inscribed realizations for polytopes with $f_0\in\{6,7\}$, with $f_0=8$ and $f_3=7$, and for two types with $f_0=9$ and $f_3=7$.\\

\resizebox{\textwidth}{!}{
\begin{tabular}{cl}
$f$-vector & Coordinates \\\hline
(6, 13, 13, 6)  & $((0, 0, 0, 0), (0, 0, 0, 1), (0, 0, 1, 0), (0, 1, 0, 0), (1, 0, 0, 0), (1, 0, 1, 0))$ \\[0.25em]
(6, 14, 15, 7)  & $((0, 0, 0, 0), (0, 0, 0, 1), (0, 0, 1, 0), (0, 1, 0, 0), (1, 0, 0, 0), (1, 1, 1, 0))$ \\[0.25em]
(6, 14, 16, 8)  & $((-1, 0, 0, 0), (0, -\frac{1}{3}, -\frac{2}{3}, -\frac{2}{3}), (0, 0, 0, 1), (0, 0, 1, 0), (0, 1, 0, 0), (1, 0, 0, 0))$ \\[0.25em]
(6, 15, 18, 9)  & $((-\frac{3}{5}, -\frac{4}{5}, 0, 0), (0, 0, -\frac{3}{5}, -\frac{4}{5}), (0, 0, 0, 1), (0, 0, 1, 0), (0, 1, 0, 0), (1, 0, 0, 0))$ \\[0.25em]
(7, 15, 14, 6)  & $((0, 0, 0, 0), (0, 0, 0, 1), (0, 0, 1, 0), (0, 1, 0, 0), (1, 0, 0, 0), (1, 0, 1, 0), (1, 1, 0, 0))$ \\[0.25em]
(7, 16, 16, 7)  & $((0, 0, -1, 0), (0, 0, 0, -1), (0, 0, 0, 1), (0, 0, \frac{3}{5}, \frac{4}{5}), (0, 0, 1, 0), (0, 1, 0, 0), (1, 0, 0, 0))$ \\[0.25em]
(7, 16, 16, 7)  & $((0, -\frac{4}{5}, -\frac{3}{5}, 0), (0, 0, -1, 0), (0, 0, 0, 1), (0, 0, 1, 0), (0, 1, 0, 0), (\frac{6}{7}, \frac{3}{7}, \frac{2}{7}, 0), (1, 0, 0, 0))$ \\[0.25em]
(7, 17, 17, 7)  & $((0, 0, 0, 0), (0, 0, 0, 1), (0, 0, 1, 0), (0, 1, 0, 0), (1, 0, 0, 0), (1, 1, 0, 1), (1, 1, 1, 0))$ \\[0.25em]
(7, 17, 18, 8)  & $((0, 0, 0, 0), (0, 0, 0, 1), (0, 0, 1, 0), (0, 1, 0, 0), (0, 1, 0, 1), (1, 0, 0, 0), (1, 0, 1, 0))$ \\[0.25em]
(7, 17, 18, 8)  & $((-\frac{3}{5}, 0, 0, \frac{4}{5}), (0, 0, -1, 0), (0, 0, 0, 1), (0, 0, 1, 0), (0, 1, 0, 0), (\frac{6}{7}, \frac{3}{7}, \frac{2}{7}, 0), (1, 0, 0, 0))$ \\[0.25em]
(7, 17, 18, 8)  & $((0, 0, -1, 0), (0, 0, 0, 1), (0, 0, 1, 0), (0, 1, 0, 0), (\frac{3}{13}, \frac{12}{13}, \frac{4}{13}, 0), (\frac{4}{5}, 0, -\frac{3}{5}, 0), (1, 0, 0, 0))$ \\[0.25em]
(7, 17, 18, 8)  & $((-\frac{12}{13}, -\frac{5}{13}, 0, 0), (0, 0, 0, 1), (0, 0, 1, 0), (0, 1, 0, 0), (\frac{4}{9}, \frac{7}{9}, \frac{4}{9}, 0), (\frac{20}{29}, -\frac{21}{29}, 0, 0), (1, 0, 0, 0))$ \\[0.25em]
(7, 18, 19, 8)  & $((0, -\frac{2}{7}, -\frac{6}{7}, \frac{3}{7}), (0, 0, 0, 1), (0, 0, 1, 0), (0, \frac{2}{7}, \frac{6}{7}, \frac{3}{7}), (0, 1, 0, 0), (\frac{1}{2}, \frac{1}{2}, \frac{1}{2}, \frac{1}{2}), (1, 0, 0, 0))$ \\[0.25em]
(7, 17, 19, 9)  & $((-\frac{4}{5}, 0, -\frac{3}{5}, 0), (0, 0, 0, 1), (0, 0, 1, 0), (0, 1, 0, 0), (\frac{3}{5}, 0, \frac{4}{5}, 0), (\frac{14}{15}, \frac{4}{15}, \frac{1}{5}, \frac{2}{15}), (1, 0, 0, 0))$ \\[0.25em]
(7, 18, 20, 9)  & $((-\frac{4}{5}, -\frac{3}{5}, 0, 0), (0, 0, 0, 1), (0, 0, 1, 0), (0, 1, 0, 0), (\frac{6}{11}, \frac{6}{11}, \frac{7}{11}, 0), (\frac{7}{9}, -\frac{4}{9}, -\frac{4}{9}, 0), (1, 0, 0, 0))$ \\[0.25em]
(7, 18, 20, 9)  & $((-\frac{6}{7}, \frac{2}{7}, \frac{3}{7}, 0), (0, -\frac{3}{5}, -\frac{4}{5}, 0), (0, 0, 0, 1), (0, 0, 1, 0), (0, 1, 0, 0), (\frac{2}{7}, \frac{3}{7}, \frac{6}{7}, 0), (1, 0, 0, 0))$ \\[0.25em]
(7, 18, 20, 9)  & $((0, -\frac{7}{25}, -\frac{24}{25}, 0), (0, 0, 0, 1), (0, 0, 1, 0), (0, 1, 0, 0), (\frac{5}{9}, \frac{2}{3}, \frac{4}{9}, \frac{2}{9}), (\frac{12}{13}, \frac{4}{13}, \frac{3}{13}, 0), (1, 0, 0, 0))$ \\[0.25em]
(7, 18, 20, 9)  & $((-\frac{4}{5}, 0, -\frac{3}{5}, 0), (0, 0, 0, 1), (0, 0, 1, 0), (0, 1, 0, 0), (\frac{7}{9}, 0, \frac{4}{9}, \frac{4}{9}), (\frac{6}{7}, \frac{2}{7}, \frac{3}{7}, 0), (1, 0, 0, 0))$ \\[0.25em]
(7, 18, 20, 9)  & $((-\frac{4}{5}, 0, -\frac{3}{5}, 0), (-\frac{2}{7}, 0, \frac{3}{7}, \frac{6}{7}), (0, 0, 0, 1), (0, 0, 1, 0), (0, 1, 0, 0), (\frac{4}{13}, \frac{3}{13}, \frac{12}{13}, 0), (1, 0, 0, 0))$ \\[0.25em]
(7, 18, 20, 9)  & $((0, 0, -1, 0), (0, 0, -\frac{3}{5}, \frac{4}{5}), (0, 0, 0, 1), (0, 0, 1, 0), (0, 1, 0, 0), (\frac{4}{9}, \frac{7}{9}, \frac{4}{9}, 0), (1, 0, 0, 0))$ \\[0.25em]
(7, 18, 21, 10) & $((0, 0, 0, 0), (0, 0, 0, 1), (0, 0, 1, 0), (0, 1, 0, 0), (1, 0, 0, 0), (1, 1, 1, 0), (1, 1, 1, 1))$ \\[0.25em]
(7, 18, 21, 10) & $((-\frac{10}{11}, -\frac{2}{11}, \frac{4}{11}, -\frac{1}{11}), (-\frac{3}{5}, 0, -\frac{4}{5}, 0), (0, 0, 0, 1), (0, 0, 1, 0), (0, 1, 0, 0), (\frac{2}{7}, \frac{3}{7}, \frac{6}{7}, 0), (1, 0, 0, 0))$ \\[0.25em]
(7, 18, 21, 10) & $((0, 0, 0, 0), (0, 0, 0, 1), (0, 0, 1, 0), (0, 1, 0, 0), (1, 0, 0, 0), (1, 0, 1, 0), (1, 1, 1, 1))$ \\[0.25em]
(7, 18, 21, 10) & $((-\frac{4}{5}, -\frac{3}{5}, 0, 0), (0, 0, 0, 1), (0, 0, 1, 0), (0, 1, 0, 0), (\frac{3}{13}, \frac{12}{13}, \frac{4}{13}, 0), (\frac{6}{7}, \frac{3}{7}, 0, -\frac{2}{7}), (1, 0, 0, 0))$ \\[0.25em]
(7, 19, 22, 10) & $((-\frac{12}{13}, 0, -\frac{5}{13}, 0), (-\frac{6}{11}, 0, -\frac{2}{11}, \frac{9}{11}), (0, 0, 0, 1), (0, 0, 1, 0), (0, 1, 0, 0), (\frac{1}{3}, \frac{2}{3}, \frac{2}{3}, 0), (1, 0, 0, 0))$ \\[0.25em]
(7, 19, 22, 10) & $((-\frac{4}{5}, -\frac{3}{5}, 0, 0), (0, 0, 0, 1), (0, 0, 1, 0), (0, 1, 0, 0), (\frac{6}{11}, -\frac{6}{11}, 0, \frac{7}{11}), (\frac{6}{7}, \frac{3}{7}, \frac{2}{7}, 0), (1, 0, 0, 0))$ \\[0.25em]
(7, 18, 22, 11) & $((-\frac{4}{5}, 0, -\frac{3}{5}, 0), (0, 0, 0, 1), (0, 0, 1, 0), (0, 1, 0, 0), (\frac{2}{3}, \frac{2}{3}, \frac{1}{3}, 0), (\frac{12}{17}, 0, -\frac{12}{17}, -\frac{1}{17}), (\frac{112}{113}, \frac{15}{113}, 0, 0))$ \\[0.25em]
(7, 19, 23, 11) & $((-\frac{4}{5}, -\frac{3}{5}, 0, 0), (0, 0, 0, 1), (0, 0, 1, 0), (0, 1, 0, 0), (\frac{6}{11}, \frac{6}{11}, \frac{7}{11}, 0), (\frac{4}{7}, -\frac{2}{7}, -\frac{2}{7}, \frac{5}{7}), (1, 0, 0, 0))$ \\[0.25em]
(7, 19, 23, 11) & $((0, -\frac{2}{7}, \frac{6}{7}, \frac{3}{7}), (0, 0, -1, 0), (0, 0, 0, 1), (0, 0, 1, 0), (0, 1, 0, 0), (\frac{1}{6}, \frac{5}{6}, -\frac{1}{6}, \frac{1}{2}), (1, 0, 0, 0))$ \\[0.25em]
(7, 19, 23, 11) & $((-\frac{2}{3}, 0, \frac{1}{3}, -\frac{2}{3}), (-\frac{3}{5}, 0, -\frac{4}{5}, 0), (0, 0, 0, 1), (0, 0, 1, 0), (0, 1, 0, 0), (\frac{6}{11}, \frac{6}{11}, \frac{7}{11}, 0), (1, 0, 0, 0))$ \\[0.25em]
(7, 19, 24, 12) & $((-\frac{2}{5}, \frac{4}{5}, -\frac{2}{5}, -\frac{1}{5}), (0, -\frac{4}{5}, -\frac{3}{5}, 0), (0, 0, 0, 1), (0, 0, 1, 0), (0, 1, 0, 0), (\frac{6}{11}, \frac{6}{11}, \frac{7}{11}, 0), (1, 0, 0, 0))$ \\[0.25em]
(7, 20, 25, 12) & $((-\frac{3}{5}, -\frac{4}{5}, 0, 0), (-\frac{2}{13}, \frac{10}{13}, \frac{4}{13}, \frac{7}{13}), (0, 0, 0, 1), (0, 0, 1, 0), (0, 1, 0, 0), (\frac{6}{11}, \frac{6}{11}, \frac{7}{11}, 0), (1, 0, 0, 0))$ \\[0.25em]
(7, 20, 26, 13) & $((-\frac{1}{6}, -\frac{1}{6}, \frac{5}{6}, \frac{1}{2}), (0, 0, 0, -1), (0, 0, 0, 1), (0, 0, 1, 0), (0, 1, 0, 0), (\frac{2}{3}, \frac{5}{9}, -\frac{2}{9}, \frac{4}{9}), (1, 0, 0, 0))$ \\[0.25em]
(7, 20, 26, 13) & $((-\frac{1}{2}, \frac{1}{2}, \frac{1}{2}, -\frac{1}{2}), (0, -\frac{4}{5}, -\frac{3}{5}, 0), (0, 0, 0, 1), (0, 0, 1, 0), (0, 1, 0, 0), (\frac{6}{11}, \frac{6}{11}, \frac{7}{11}, 0), (1, 0, 0, 0))$ \\[0.25em]
(7, 21, 28, 14) & $((-\frac{2}{13}, \frac{7}{13}, \frac{4}{13}, \frac{10}{13}), (0, 0, 0, -1), (0, 0, 0, 1), (0, 0, 1, 0), (0, 1, 0, 0), (\frac{5}{7}, \frac{4}{7}, -\frac{2}{7}, \frac{2}{7}), (1, 0, 0, 0))$ \\[1em]
(8, 18, 17, 7) & $((-\frac{4}{5}, -\frac{3}{5}, 0, 0), (-\frac{4}{5}, \frac{3}{5}, 0, 0), (0, -\frac{3}{5}, \frac{4}{5}, 0), (0, 0, 0, 1), (0, \frac{3}{5}, \frac{4}{5}, 0), (0, 1, 0, 0), (\frac{4}{5}, -\frac{3}{5}, 0, 0), (\frac{4}{5}, \frac{3}{5}, 0, 0))$ \\[0.25em]
(8, 18, 17, 7) & $((-\frac{4}{5}, 0, -\frac{3}{5}, 0), (-\frac{3}{5}, 0, 0, \frac{4}{5}), (-\frac{3}{5}, 0, \frac{4}{5}, 0), (-\frac{3}{5}, \frac{4}{5}, 0, 0), (\frac{3}{5}, 0, -\frac{4}{5}, 0), (\frac{3}{5}, 0, 0, \frac{4}{5}), (\frac{3}{5}, 0, \frac{4}{5}, 0), (\frac{3}{5}, \frac{4}{5}, 0, 0))$ \\[0.25em]
(8, 18, 17, 7) & $((0, -1, 1, 0), (0, 0, 1, -1), (0, 0, 1, 1), (0, 1, -1, 0), (0, 1, 0, -1), (0, 1, 0, 1), (0, 1, 1, 0), (1, 1, 0, 0))$ \\[0.25em]
(8, 18, 17, 7) & $((-\frac{1}{2}, -\frac{1}{2}, 0, 0), (-\frac{1}{2}, 0, 0, -\frac{1}{2}), (0, -\frac{1}{2}, -\frac{1}{2}, 0), (0, 0, -\frac{1}{2}, -\frac{1}{2}), (0, 0, 0, 1), (0, 0, 1, 0), (0, 1, 0, 0), (1, 0, 0, 0))$ \\[0.25em]
(8, 19, 18, 7) & $((-\frac{1}{2}, -\frac{1}{2}, -\frac{1}{2}, -\frac{1}{2}), (-\frac{1}{2}, -\frac{1}{2}, -\frac{1}{2}, \frac{1}{2}), (-\frac{1}{2}, -\frac{1}{2}, \frac{1}{2}, -\frac{1}{2}), (-\frac{1}{2}, -\frac{1}{2}, \frac{7}{10}, -\frac{1}{10}), (-\frac{1}{2}, \frac{1}{2}, -\frac{1}{2}, -\frac{1}{2}), (-\frac{1}{2}, \frac{1}{2}, \frac{1}{2}, -\frac{1}{2}),$ \\[0.25em]
               & $(-\frac{1}{10}, \frac{7}{10}, -\frac{1}{2}, -\frac{1}{2}), (\frac{1}{2}, -\frac{1}{2}, -\frac{1}{2}, -\frac{1}{2}))$ \\[1em]
(9, 19, 17, 7) & $((-\frac{3}{5}, -\frac{4}{5}, 0, 0), (-\frac{3}{5}, 0, 0, \frac{4}{5}), (-\frac{3}{5}, 0, \frac{4}{5}, 0), (-\frac{3}{5}, \frac{4}{5}, 0, 0), (\frac{3}{5}, -\frac{4}{5}, 0, 0), (\frac{3}{5}, 0, 0, \frac{4}{5}), (\frac{3}{5}, 0, \frac{4}{5}, 0), (\frac{3}{5}, \frac{4}{5}, 0, 0), (1, 0, 0, 0))$ \\[0.25em]
(9, 20, 18, 7) & $((0, -\frac{2}{3}, -\frac{2}{3}, -\frac{1}{3}), (0, -\frac{2}{3}, -\frac{2}{3}, \frac{1}{3}), (0, -\frac{2}{3}, \frac{2}{3}, -\frac{1}{3}), (0, -\frac{2}{3}, \frac{2}{3}, \frac{1}{3}), (0, \frac{2}{3}, -\frac{2}{3}, -\frac{1}{3}), (0, \frac{2}{3}, -\frac{2}{3}, \frac{1}{3}), (0, \frac{2}{3}, \frac{2}{3}, -\frac{1}{3}),$ \\[0.25em]
               & $(0, \frac{2}{3}, \frac{2}{3}, \frac{1}{3}), (1, 0, 0, 0))$ \\[0.25em] 
\end{tabular}}

\newpage
\section{Realization of $C_4(7)^*$}
\label{app:c47}

Let $a$ denote the root of the irreducible polynomial 
\begin{align*}
2000 \, x^{10} & - 61600 \, x^{8} + 84000 \, x^{7} + 550760 \, x^{6} - 1234800 \, x^{5} \\
               & - 2287712 \, x^{4} + 11660040 \, x^{3} - 17853395 \, x^{2} + 12862500 \, x - 3721550\in\Q[x],
\end{align*}
which is approximately equal to $0.9989495\dots$.
The point $p_{(ijkl)^*}$ is represented by a matrix where the $ij$-th entry represents the coefficient of $a^{j-1}$ in the $i$-th coordinate of $p_{(ijkl)^*}$, with the common denominator on the left-hand side.

\smallskip
\noindent
$20070439200\cdot p_{(1237)^*}$ = 
\[
\resizebox{\textwidth}{!}{$%
\left(\begin{array}{>{\raggedleft}p{0.17\textwidth}>{\raggedleft}p{0.17\textwidth}>{\raggedleft}p{0.17\textwidth}>{\raggedleft}p{0.17\textwidth}>{\raggedleft}p{0.17\textwidth}>{\raggedleft}p{0.17\textwidth}>{\raggedleft}p{0.17\textwidth}>{\raggedleft}p{0.17\textwidth}>{\raggedleft}p{0.17\textwidth}p{0.17\textwidth}<{\raggedleft}}
-71971814950 & 214819961160 & -239913086315 & 103001684898 & 11944786350 & -19361648040 & 204242500 & 1993811400 & -101845000 & -68508000 \\
102148616850 & -287478698780 & 314362827345 & -135231668824 & -12541988550 & 25218409020 & -960907500 & -2679443200 & 185385000 & 98254000 \\
-58751181300 & 169926267000 & -177108168090 & 67631242560 & 12031670700 & -12956743800 & -363237000 & 1315608000 & -44610000 & -43260000
\end{array}\right)$}
\]
$566773200\cdot p_{(1245)^*}$ = 
\[
\resizebox{\textwidth}{!}{$%
\left(\begin{array}{>{\raggedleft}p{0.17\textwidth}>{\raggedleft}p{0.17\textwidth}>{\raggedleft}p{0.17\textwidth}>{\raggedleft}p{0.17\textwidth}>{\raggedleft}p{0.17\textwidth}>{\raggedleft}p{0.17\textwidth}>{\raggedleft}p{0.17\textwidth}>{\raggedleft}p{0.17\textwidth}>{\raggedleft}p{0.17\textwidth}p{0.17\textwidth}<{\raggedleft}}
19328773730 & -51170823480 & 50521437505 & -17774272446 & -3945546150 & 3488404080 & 193196500 & -351787800 & 6545000 & 11016000 \\
-39521739390 & 109005389220 & -116217783255 & 47552262072 & 5924107350 & -8971113060 & 122104500 & 939069600 & -53025000 & -33162000 \\
13632192000 & -29671387480 & 23564352840 & -3847036844 & -3941687400 & 996066120 & 467796000 & -77529200 & -22260000 & -76000
\end{array}\right)$}
\]
$56010528\cdot p_{(1256)^*}$ = 
\[
\resizebox{\textwidth}{!}{$%
\left(\begin{array}{>{\raggedleft}p{0.17\textwidth}>{\raggedleft}p{0.17\textwidth}>{\raggedleft}p{0.17\textwidth}>{\raggedleft}p{0.17\textwidth}>{\raggedleft}p{0.17\textwidth}>{\raggedleft}p{0.17\textwidth}>{\raggedleft}p{0.17\textwidth}>{\raggedleft}p{0.17\textwidth}>{\raggedleft}p{0.17\textwidth}p{0.17\textwidth}<{\raggedleft}}
-121296462 & 291892412 & -332564127 & 145513732 & 23348010 & -27045060 & -989100 & 2623600 & -63000 & -82000 \\
-10659754 & -12709032 & 23499763 & -29109234 & 6678210 & 5315520 & -1192100 & -592200 & 77000 & 24000 \\
34691020 & -25418064 & 46999526 & -58218468 & 13356420 & 10631040 & -2384200 & -1184400 & 154000 & 48000
\end{array}\right)$}
\]
$2867205600\cdot p_{(1267)^*}$ = 
\[
\resizebox{\textwidth}{!}{$%
\left(\begin{array}{>{\raggedleft}p{0.17\textwidth}>{\raggedleft}p{0.17\textwidth}>{\raggedleft}p{0.17\textwidth}>{\raggedleft}p{0.17\textwidth}>{\raggedleft}p{0.17\textwidth}>{\raggedleft}p{0.17\textwidth}>{\raggedleft}p{0.17\textwidth}>{\raggedleft}p{0.17\textwidth}>{\raggedleft}p{0.17\textwidth}p{0.17\textwidth}<{\raggedleft}}
-19300421350 & 46282126310 & -43936222155 & 15479075353 & 2847331200 & -3041102190 & -50487500 & 322102900 & -13440000 & -10963000 \\
-12982515700 & 35506910670 & -36764273210 & 14405597171 & 2320167150 & -2785633830 & -57575000 & 284430300 & -9205000 & -9191000 \\
478725100 & -1272505500 & 6099370550 & -4474065050 & -295470000 & 807691500 & 31535000 & -68565000 & -1400000 & 1550000
\end{array}\right)$}
\]
$793482480\cdot p_{(1347)^*}$ = 
\[
\resizebox{\textwidth}{!}{$%
\left(\begin{array}{>{\raggedleft}p{0.17\textwidth}>{\raggedleft}p{0.17\textwidth}>{\raggedleft}p{0.17\textwidth}>{\raggedleft}p{0.17\textwidth}>{\raggedleft}p{0.17\textwidth}>{\raggedleft}p{0.17\textwidth}>{\raggedleft}p{0.17\textwidth}>{\raggedleft}p{0.17\textwidth}>{\raggedleft}p{0.17\textwidth}p{0.17\textwidth}<{\raggedleft}}
-19648634950 & 49266897120 & -46185749435 & 14344638966 & 4507541850 & -2918602680 & -338481500 & 284503800 & 4865000 & -7836000 \\
26282357850 & -73622222660 & 79646366205 & -33416324788 & -3668407050 & 6265642740 & -154045500 & -659688400 & 41055000 & 23698000 \\
566773200 & 0 & 0 & 0 & 0 & 0 & 0 & 0 & 0 & 0
\end{array}\right)$}
\]
$1983706200\cdot p_{(1457)^*}$ = 
\[
\resizebox{\textwidth}{!}{$%
\left(\begin{array}{>{\raggedleft}p{0.17\textwidth}>{\raggedleft}p{0.17\textwidth}>{\raggedleft}p{0.17\textwidth}>{\raggedleft}p{0.17\textwidth}>{\raggedleft}p{0.17\textwidth}>{\raggedleft}p{0.17\textwidth}>{\raggedleft}p{0.17\textwidth}>{\raggedleft}p{0.17\textwidth}>{\raggedleft}p{0.17\textwidth}p{0.17\textwidth}<{\raggedleft}}
-10224144000 & 22253540610 & -17673264630 & 2885277633 & 2956265550 & -747049590 & -350847000 & 58146900 & 16695000 & 57000 \\
-3408048000 & 7417846870 & -5891088210 & 961759211 & 985421850 & -249016530 & -116949000 & 19382300 & 5565000 & 19000 \\
18032093100 & -37089234350 & 29455441050 & -4808796055 & -4927109250 & 1245082650 & 584745000 & -96911500 & -27825000 & -95000
\end{array}\right)$}
\]
$7\cdot p_{(1567)^*}$ = 
\[
\resizebox{\textwidth}{!}{$%
\left(\begin{array}{>{\raggedleft}p{0.17\textwidth}>{\raggedleft}p{0.17\textwidth}>{\raggedleft}p{0.17\textwidth}>{\raggedleft}p{0.17\textwidth}>{\raggedleft}p{0.17\textwidth}>{\raggedleft}p{0.17\textwidth}>{\raggedleft}p{0.17\textwidth}>{\raggedleft}p{0.17\textwidth}>{\raggedleft}p{0.17\textwidth}p{0.17\textwidth}<{\raggedleft}}
-3 & 0 & 0 & 0 & 0 & 0 & 0 & 0 & 0 & 0 \\
-1 & 0 & 0 & 0 & 0 & 0 & 0 & 0 & 0 & 0 \\
5 & 0 & 0 & 0 & 0 & 0 & 0 & 0 & 0 & 0
\end{array}\right)$}
\]
$2867205600\cdot p_{(2345)^*}$ = 
\[
\resizebox{\textwidth}{!}{$%
\left(\begin{array}{>{\raggedleft}p{0.17\textwidth}>{\raggedleft}p{0.17\textwidth}>{\raggedleft}p{0.17\textwidth}>{\raggedleft}p{0.17\textwidth}>{\raggedleft}p{0.17\textwidth}>{\raggedleft}p{0.17\textwidth}>{\raggedleft}p{0.17\textwidth}>{\raggedleft}p{0.17\textwidth}>{\raggedleft}p{0.17\textwidth}p{0.17\textwidth}<{\raggedleft}}
728391215650 & -2094919908480 & 2255588894825 & -918271186602 & -127852684050 & 173832156960 & 331908500 & -17887938600 & 836395000 & 610392000 \\
-640177480950 & 1815290831340 & -2008474027875 & 880868550996 & 73047526650 & -163761989580 & 7313554500 & 17445682800 & -1256535000 & -644166000 \\
57014523300 & -204183568400 & 279428358930 & -144653044060 & -10489728900 & 26862838800 & -393351000 & -2657158000 & 106470000 & 84760000
\end{array}\right)$}
\]
$1720323360\cdot p_{(2356)^*}$ = 
\[
\resizebox{\textwidth}{!}{$%
\left(\begin{array}{>{\raggedleft}p{0.17\textwidth}>{\raggedleft}p{0.17\textwidth}>{\raggedleft}p{0.17\textwidth}>{\raggedleft}p{0.17\textwidth}>{\raggedleft}p{0.17\textwidth}>{\raggedleft}p{0.17\textwidth}>{\raggedleft}p{0.17\textwidth}>{\raggedleft}p{0.17\textwidth}>{\raggedleft}p{0.17\textwidth}p{0.17\textwidth}<{\raggedleft}}
11378013150 & -33812265760 & 37688384055 & -15738414566 & -1992004350 & 2921101680 & -30964500 & -302283800 & 17325000 & 10736000 \\
-9162336050 & 21572588100 & -19448165905 & 5712716940 & 1793730750 & -1154111700 & -108188500 & 117642000 & -1085000 & -3690000 \\
-16604348740 & 43145176200 & -38896331810 & 11425433880 & 3587461500 & -2308223400 & -216377000 & 235284000 & -2170000 & -7380000
\end{array}\right)$}
\]
$60211317600\cdot p_{(2367)^*}$ = 
\[
\resizebox{\textwidth}{!}{$%
\left(\begin{array}{>{\raggedleft}p{0.17\textwidth}>{\raggedleft}p{0.17\textwidth}>{\raggedleft}p{0.17\textwidth}>{\raggedleft}p{0.17\textwidth}>{\raggedleft}p{0.17\textwidth}>{\raggedleft}p{0.17\textwidth}>{\raggedleft}p{0.17\textwidth}>{\raggedleft}p{0.17\textwidth}>{\raggedleft}p{0.17\textwidth}p{0.17\textwidth}<{\raggedleft}}
275535072750 & -590111239480 & 409498429935 & -40933667138 & -62997386550 & 12663303240 & 6368575500 & -1213843400 & -235935000 & 16748000 \\
545270272750 & -1346220763860 & 1256883097295 & -397204485216 & -107019755850 & 78694530180 & 5548203500 & -8048728800 & 161455000 & 259986000 \\
-481579031500 & 1445819734800 & -1519614839750 & 586406775780 & 95634745500 & -110715284400 & -1234835000 & 11421354000 & -541450000 & -395880000
\end{array}\right)$}
\]
$4014087840\cdot p_{(3456)^*}$ = 
\[
\resizebox{\textwidth}{!}{$%
\left(\begin{array}{>{\raggedleft}p{0.17\textwidth}>{\raggedleft}p{0.17\textwidth}>{\raggedleft}p{0.17\textwidth}>{\raggedleft}p{0.17\textwidth}>{\raggedleft}p{0.17\textwidth}>{\raggedleft}p{0.17\textwidth}>{\raggedleft}p{0.17\textwidth}>{\raggedleft}p{0.17\textwidth}>{\raggedleft}p{0.17\textwidth}p{0.17\textwidth}<{\raggedleft}}
11154335990 & -21423180730 & 13733960835 & -1699549859 & -1024781100 & 430167570 & -17328500 & -68488700 & 8610000 & 3389000 \\
12886770680 & -35252409570 & 35544399100 & -13510784161 & -2261073150 & 2624095530 & 51268000 & -270717300 & 9485000 & 8881000 \\
29787629200 & -70504819140 & 71088798200 & -27021568322 & -4522146300 & 5248191060 & 102536000 & -541434600 & 18970000 & 17762000
\end{array}\right)$}
\]
$40007520\cdot p_{(3467)^*}$ = 
\[
\resizebox{\textwidth}{!}{$%
\left(\begin{array}{>{\raggedleft}p{0.17\textwidth}>{\raggedleft}p{0.17\textwidth}>{\raggedleft}p{0.17\textwidth}>{\raggedleft}p{0.17\textwidth}>{\raggedleft}p{0.17\textwidth}>{\raggedleft}p{0.17\textwidth}>{\raggedleft}p{0.17\textwidth}>{\raggedleft}p{0.17\textwidth}>{\raggedleft}p{0.17\textwidth}p{0.17\textwidth}<{\raggedleft}}
36683850 & -87262924 & 89481693 & -18404792 & -15459990 & 3525060 & 1656900 & -257600 & -63000 & 2000 \\
-9038050 & 49488432 & -66140249 & 49303506 & -4364430 & -9061080 & 1168300 & 961800 & -91000 & -36000 \\
98489020 & -183897000 & 213202430 & -100971360 & -11568900 & 18727800 & 119000 & -1848000 & 70000 & 60000
\end{array}\right)$}
\]
$p_{(4567)^*}$ = 
\[
\resizebox{\textwidth}{!}{$%
\left(\begin{array}{>{\raggedleft}p{0.17\textwidth}>{\raggedleft}p{0.17\textwidth}>{\raggedleft}p{0.17\textwidth}>{\raggedleft}p{0.17\textwidth}>{\raggedleft}p{0.17\textwidth}>{\raggedleft}p{0.17\textwidth}>{\raggedleft}p{0.17\textwidth}>{\raggedleft}p{0.17\textwidth}>{\raggedleft}p{0.17\textwidth}p{0.17\textwidth}<{\raggedleft}}
0 & 0 & 0 & 0 & 0 & 0 & 0 & 0 & 0 & 0 \\
0 & 0 & 0 & 0 & 0 & 0 & 0 & 0 & 0 & 0 \\
1 & 0 & 0 & 0 & 0 & 0 & 0 & 0 & 0 & 0
\end{array}\right)$}
\]
\end{document}